\tikzset{->-/.style={decoration={  markings,  mark=at position #1 with
    {\arrow{>}}},postaction={decorate}}}
\tikzset{-<-/.style={decoration={  markings,  mark=at position #1 with
    {\arrow{<}}},postaction={decorate}}}
\theoremstyle{plain}
\newtheorem{theorem}{Theorem}[section]
\newaliascnt{proposition}{theorem}
\newtheorem{proposition}[proposition]{Proposition}
\newaliascnt{lemma}{theorem}
\newtheorem{lemma}[lemma]{Lemma}
\newaliascnt{definition}{theorem}
\newtheorem{definition}[definition]{Definition}
\newaliascnt{assumption}{theorem}
\newtheorem{assumption}[assumption]{Assumption}
\newaliascnt{remark}{theorem}
\newtheorem{remark}[remark]{Remark}
\newaliascnt{construction}{theorem}
\newtheorem{construction}[construction]{Construction}
\newaliascnt{corollary}{theorem}            % 创建 alias 计数器
\newtheorem{corollary}[corollary]{Corollary} % 定义环境
\newtheorem{propdef}[theorem]{Proposition/Definition}
\numberwithin{equation}{section}
\crefname{theorem}{theorem}{theorems}
\Crefname{theorem}{Theorem}{Theorems}
\crefname{proposition}{proposition}{propositions}
\Crefname{proposition}{Proposition}{Propositions}
\crefname{lemma}{lemma}{lemmas}
\Crefname{lemma}{Lemma}{Lemmas}
\crefname{definition}{definition}{definitions}
\Crefname{definition}{Definition}{Definitions}
\crefname{assumption}{assumption}{assumptions}
\Crefname{assumption}{Assumption}{Assumptions}
\crefname{remark}{remark}{remarks}
\Crefname{remark}{Remark}{Remarks}
\crefname{construction}{construction}{constructions}
\Crefname{Construction}{Construction}{Constructions}
\crefname{corollary}{Corollary}{Corollaries}
\Crefname{corollary}{Corollary}{Corollaries}
\def\hua{\mathcal}
\def\hh{\mathcal}
\def\<{\langle}
\def\>{\rangle}
\def\={\simeq}
\def\to{\rightarrow}
\def\Lten{\stackrel{L}{\otimes}}
\def\ZZ{\mathbb{Z}}
\def\CC{\mathbb{C}}
\def\XX{\mathbb{X}}
\def\on{\operatorname} %this makes \on{...} to \operatorname{...}
\newcommand\pvd{\operatorname{pvd}}
\newcommand\ind{\operatorname{index}}
\newcommand{\Cone}{\operatorname{Cone}}
\newcommand{\Sim}{\operatorname{Sim}}
\newcommand{\Map}{\operatorname{Map}}
\newcommand{\id}{\operatorname{id}}
\newcommand{\im}{\operatorname{Im}}
\newcommand{\Ho}{\operatorname{Ho}}
\newcommand{\Exit}{\operatorname{Exit}}
\newcommand\Bt[1]{\operatorname{B}_{#1}}
\newcommand{\EG}{\operatorname{EG}}
\newcommand{\EGp}{\EG^\circ}       % principal component of EG
\newcommand{\EGb}{\EG^\bullet}       % principal part of EG
\newcommand{\CA}{\operatorname{CA}}
\newcommand{\res}{\operatorname{res}}
\newcommand{\indd}{\operatorname{ind}}
\newcommand{\wCA}{\widetilde{\CA}}
\def\uCA{\widetilde{\CA}}
\newcommand*\cocolon{%reversed colon
        \nobreak
        \mskip6mu plus1mu
        \mathpunct{}%
        \nonscript
        \mkern-\thinmuskip
        {:}%
        \mskip2mu
        \relax
}
\newcommand{\cof}{\operatorname{cof}}
\newcommand{\ev}{\operatorname{ev}}
\def\w{\mathbf{w}}
\newcommand\surf{\mathbf{S}}  % marked surface
\def\surfi{\surf^{\circ}}
\newcommand\sow{\surf_\w}  % decorated marked surface+w
\def\surfi{\surf^{\circ}}
\newcommand\subsur{\Sigma}  % sub decorated marked surface+w
\newcommand\colwsur{\overline{\surf}_{\overline{\w}}}  % collapsed weighted DMS
\newcommand{\Tri}{\Delta}
\newcommand\AS{\mathbb{A}}
\def\D{\hh{D}}
\def\uD{\overline{\hh{D}}}
\def\C{\hh{C}}
\def\h{\hh{H}}
\def\M{\mathbf{M}}
\def\wt{\mathbf{w}}
\newcommand\W{\Delta} % weighted points
\def\A{\mathbb{A}}
\def\Ags{\A^{\sharp}_{\wg}}
\def\Agb{\A^{\flat}_{\wg}}
\def\gs{\wg^{\sharp}}
\def\SS{\mathbb{S}}
\newcommand\dAS{\SS} % dual arc system
\def\lc{\widetilde{c}}
\def\wg{\widetilde{\gamma}}
\def\tg{\widetilde{\gamma}}
\def\we{\widetilde{\eta}}
\def\ws{\widetilde{\sigma}}
\def\wt{\widetilde{\tau}}
\def\wa{\widetilde{\alpha}}
\def\wb{\widetilde{\beta}}
\def\Dw{\D(\sow)}
\def\uA{\overline{\AS}}
\def\uAgs{\uA^{\sharp}_{\gamma}}
\newcommand\hs{\h_S^{\sharp}}
\newcommand\hsb{\h_S^{\flat}}
\newcommand\pss{\psi_S^{\sharp}}
\newcommand\psb{\psi_S^{\flat}}
\def\ue{\overline{\eta}}
\def\cols{\nu}
\def\k{\mathbf{k}}
\def\F{\hh{F}}
\def\ho{\operatorname{h}}
\def\ww{node[white]{$\bullet$}node[red]{$\circ$}}
\def\nn{node{$\bullet$}}
\def\bG{\mathbf{G}}
\def\hF{\hua{F}}
\def\Fs{\hF_{sub}}
\def\pb{\lim\Fs}
\def\glo{\hh{C}}
\newcommand\losec{\hh{L}} %local sections of schober
\def\eSsub{\SS|_{\subsur}}
\def\Ssub{\SS|_{\subsur}}
\def\Fsub{\hF|_{\subsur}}
\def\Squo{\overline{\SS}}
\def\Fquo{\overline{\hF}}
\def\Gs{\bG_{sub}}
\def\Gq{\overline{\bG}}
\def\vq{\overline{v}}
\def\eq{\overline{e}}
\def\Is{I_{sub}}
\def\Iq{\overline{I}}
\def\Es{E_{sub}}
\def\Eq{\overline{E}}
\def\Cat{\operatorname{Cat}}
\def\St{\operatorname{St}}
\def\Prl{\operatorname{Pr}^L_{\St}}
\def\Prr{\operatorname{Pr}^R_{\St}}
\def\Fun{\operatorname{Fun}}
\def\pv{\pvd(\sow)}
\def\pvs{\pvd(\subsur)}
\def\pvq{\pvd(\colwsur)}
\def\Sq{\overline{\surf}}
\def\sl{\prec}
\def\sg{\succ}
\begin{document}
%=========================================================
\title{Categorical realization of collapsing subsurfaces and perverse schobers}
%=========================================================

\author{Li Fan}
\address{Fl: Department of Mathematical Sciences, Tsinghua University, 100084 Beijing, China.}
\email{fan-l17@tsinghua.org.cn}
\author{Suiqi Lu}
\address{Ls: Department of Mathematical Sciences, Tsinghua University, 100084 Beijing, China.}
\email{lu-sq22@mails.tsinghua.edu.cn}
%=========================================================
\maketitle
%=========================================================
\begin{abstract}
We study the categorification of collapsed Riemann surfaces with quadratic differentials allowing arbitrary order zeros and poles via the Verdier quotient. We establish an isomorphism between the exchange graph of hearts in the quotient category and the exchange graph of mixed-angulations on the collapsed surface. This extends the work of Barbieri-M\"{o}ller-Qiu-So, who studied Verdier quotients of 3-Calabi-Yau categories and collapsed surfaces without simple poles. We use two methods: a combinatorial approach, and another based on the global sections of a quotient perverse schober. As an application, we describe the Bridgeland stability conditions in terms of quadratic differentials on the collapsed surface.
\end{abstract}
%=========================================================
\setcounter{tocdepth}{1}
\tableofcontents
%=========================================================
%=========================================================
\section{Introduction}
%=========================================================
\subsection{Motivation}
%=========================================================
%=========================================================
\paragraph{\textbf{Stability conditions on triangulated categories}}
%=========================================================
Triangulated categories play an important role in representation theory and mathematical physics, most notably through Kontsevich’s homological mirror symmetry conjecture. In particular, the 3-Calabi–Yau categories arising from quivers with potential are closely related to many areas, such as cluster theory, noncommutative differential geometry, and Calabi–Yau geometry, cf. \cite{K}. For 3-Calabi–Yau categories arising from decorated marked surfaces (DMS for short), the symmetry group can be identified with a subgroup of the mapping class group of the associated surface, cf. \cite{Q1}. 

Bridgeland stability conditions on a triangulated category, which were firstly introduced in \cite{B}, are significant in several branches of mathematics and physics, such as algebraic geometry, cluster theory, homological mirror symmetry and string theory, etc. The space of stability conditions on a triangulated category with support property has a complex manifold structure, which can be described by the moduli space of certain type of quadratic differentials on Riemann surfaces, cf. \cite{BS,HKK,KQ2,IQ2,BMQS,CHQ,CHQ2}. Bridgeland-Smith \cite{BS} showed the space of stability conditions on a 3-Calabi-Yau category can be identified with the moduli space of quadratic differentials with simple zeros (Gaiotto-Moore-Neitzke differentials). The space $\on{Stab}(\D)$ is usually non-compact even after projectivization, and several works propose its compactifications, cf. \cite{BCGGM1,BCGGM2,BDL20,Bol20,KKO22,BPPW22}. Barbieri–M\"{o}ller–Qiu–So \cite{BMQS}, followed by \cite{BMS} and \cite{BQ}, generalized the Bridgeland-Smith correspondence to quadratic differentials with arbitrary order zeros and arbitrary higher order poles, which acts as boundary strata in the compactification.
%=========================================================
\paragraph{\textbf{Perverse schobers}}
%=========================================================
Perverse schobers are conjectural categorification of perverse sheaves introduced by Kapranov-Schechtman \cite{KS}. In \cite{KS2}, the category of perverse sheaves on an oriented surface with boundary is described combinatorially in terms of (co)sheaves on a choice of spanning graph embedded into the surface, which is naturally a ribbon graph. To categorify the perverse sheaves, Merlin Christ \cite{Cginzburg,CSpherical} studied the perverse schobers parametrized by a ribbon graph of some marked surface. When taking the limit of the perverse schober, we get an associated $\infty$-category and hence a triangulated homotopy category. Perverse schobers also play an important role in the areas of (relative) Calabi–Yau structures, topological Fukaya categories and quadratic differential-stability conditions correspondences, cf. \cite{Ccluster, Crelative, CHQ}.

We aim to further study the Bridgeland–Smith correspondence for quadratic differentials with arbitrary order zeros and poles, and generalize the work of Barbieri–M\"{o}ller–Qiu–So, which did not treat quadratic differentials with simple poles. The category we consider arises as a Verdier quotient. Moreover, this work can be viewed as a contribution to the compactification of the space of stability conditions, where the boundary strata correspond to degenerations of quadratic differentials with higher order singularities. Our approach is to establish the isomorphism between two exchange graphs in a purely combinatorial manner, or alternatively to describe the Verdier quotient in terms of a quotient perverse schober.
%=========================================================
\subsection{Main results}
%=========================================================
We study the categorification of collapsed Riemann surfaces with quadratic differentials allowing arbitrary order zeros and poles via the Verdier quotient. 
We consider the collapsed surface $\colwsur$ of a weighted decorated marked surface $\sow$ with respect to its subsurface $\subsur$ (cf. \Cref{def:collapes}). We assume that the triangulated categories $\D(\sow)$ and $\D(\subsur)$ associated to $\sow$ and $\subsur$ satisfy the three conditions in \Cref{prop:CHQ}. The categorical realization of $\colwsur$ is defined by the following short exact sequence of triangulated categories: 
\begin{equation}\label{eq:intrses}
\begin{tikzcd}
    0 \ar[r] & \D(\subsur) \ar[r] & \Dw \ar[r,"\pi"] &\D(\colwsur) \ar[r] & 0.
\end{tikzcd}
\end{equation}
We show that there is an arc-to-object correspondence on $\colwsur$, that is, for each graded closed arc $\ue$, there is a unique arc object $X_{\ue}$ in $\D(\colwsur)$ (cf. \Cref{prop:objcor}). We particularly focus on the perfectly valued derived category $\pvd(\colwsur)$ and show the following theorem.
\begin{theorem}[\Cref{thm:EGiso}]\label{thm:mainthm}
There is an isomorphism between principal parts of corresponding exchange graphs
\begin{equation}\label{intr:iso}
\EGb(\colwsur)\cong \EGb(\pvq),
\end{equation}
which is compatible with flips and simple tilting.
\end{theorem}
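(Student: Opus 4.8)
The plan is to transport the heart--angulation dictionary from the uncollapsed surface across the Verdier quotient $\pi$ of \eqref{eq:intrses}, and to match it with the combinatorics of collapsing. First I would record the uncollapsed case: under the hypotheses of \Cref{prop:CHQ}, the arc-to-object correspondence on $\sow$ (the analogue of \Cref{prop:objcor}) yields a bijection between mixed-angulations $\A$ of $\sow$ and finite hearts $\h_{\A}\subset\pv$, under which the graded arcs of $\A$ are the simple objects of $\h_{\A}$ (so that $\A$ becomes a simple-minded collection, SMC), flips correspond to simple tiltings, and which restricts to an isomorphism $\EGb(\sow)\cong\EGb(\pv)$ of principal parts. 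This is the backbone onto which the collapsed statement is grafted.

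Next I would determine which hearts descend along $\pi$. Fix a heart $\h_{\subsur}$ of $\pvs$ --- combinatorially, a complete mixed-angulation $\Tri_{\subsur}$ of $\subsur$. By the compatibility of simple-minded collections with \eqref{eq:intrses} (an SMC of $\pv$, a subset of which is the SMC of $\h_{\subsur}$, restricts to SMCs of $\pvs$ and of $\pvq$, and conversely glues back), $\pi$ induces a bijection between the finite hearts $\h\subset\pv$ with $\h\cap\pvs=\h_{\subsur}$ and all finite hearts of $\pvq$, sending $\h$ to $\pi(\h)$ and the simples of $\h$ not lying in $\pvs$ to the simples of $\pi(\h)$. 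Combinatorially the former hearts are exactly the mixed-angulations of $\sow$ containing $\Tri_{\subsur}$, and collapsing $\subsur$ (unwinding \Cref{def:collapes} at the level of angulations) identifies these with all mixed-angulations of $\colwsur$; moreover the arc of $\colwsur$ obtained by collapsing an arc $\we\notin\Tri_{\subsur}$ corresponds to $\pi(X_{\we})$, which by \Cref{prop:objcor} is the arc object $X_{\ue}$ on $\colwsur$. Composing the three bijections yields the desired vertex correspondence $\EGb(\colwsur)\leftrightarrow\EGb(\pvq)$.

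It remains to see that this bijection intertwines the edge operations. A flip of a mixed-angulation of $\colwsur$ at an arc $\ue$ lifts to the flip of the corresponding mixed-angulation of $\sow$ at the corresponding arc $\we\notin\Tri_{\subsur}$, hence to a simple tilting of $\h_{\A}$ at the simple $X_{\we}\notin\pvs$; such a tilting leaves $\h_{\A}\cap\pvs=\h_{\subsur}$ unchanged, so it descends to a simple tilting of $\pi(\h_{\A})$ at $X_{\ue}$, and conversely every simple tilting in the principal part of $\EGb(\pvq)$ arises this way. Since the principal parts are the connected components of the distinguished objects (the distinguished heart, respectively the initial mixed-angulation) and these correspond under the bijection, the local correspondence propagates to a graph isomorphism $\EGb(\colwsur)\cong\EGb(\pvq)$ compatible with flips and simple tilting, which is \eqref{intr:iso}.

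The main obstacle is the descent step, together with the new phenomena caused by simple poles, which were excluded in Barbieri--M\"{o}ller--Qiu--So. One must establish the compatibility of simple-minded collections with \eqref{eq:intrses} in the present setting (the sequence is a priori only a short exact sequence of triangulated categories), verify that the arc objects $X_{\ue}$ attached to arcs incident to collapsed pieces or to simple poles still assemble into simple-minded collections --- i.e.\ the required $\Hom$-vanishing and generation, using \Cref{prop:CHQ} --- and check that the combinatorial collapsing map is genuinely bijective and flip-equivariant near the collapsed locus and near simple poles, so that the distinguished heart of $\pvq$ matches the initial mixed-angulation and the principal components coincide. As an alternative to this step, one can instead realize $\D(\colwsur)$ as the global sections of a quotient perverse schober on a spanning graph of $\colwsur$ and read the hearts off the local cosheaf data, which makes the identification with mixed-angulations essentially tautological; I would use whichever of the two methods is cleaner in the presence of simple poles.
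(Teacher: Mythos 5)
There is a genuine gap at the edge-correspondence step, and it sits exactly where the new difficulty of this paper lies. You assert that a flip of a mixed-angulation of $\colwsur$ at $\ue$ ``lifts to the flip of the corresponding mixed-angulation of $\sow$ at the corresponding arc $\we\notin\Tri_{\subsur}$'' and that this single tilting ``leaves $\h_{\A}\cap\pvs=\h_{\subsur}$ unchanged.'' Both claims fail precisely when the flipped arc bounds (or encloses) an $\uA$-polygon whose decoration comes from a collapsed component $\subsur_i$: downstairs the endpoint of $\overline{\gamma}$ moves past the collapsed decoration, while upstairs a single flip of $\wg$ only moves its endpoint to the next arc of the chosen refinement, which in general is one of the arcs introduced inside (or around) $\subsur_i$; the result is not a refinement of $\uA'$, and the induced mixed-angulation of $\subsur$ (hence the heart $\h\cap\pvs$) does change. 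This is why the paper's proof hinges on \Cref{prop:refine_of_flip}: one must choose a suitable refinement (types I--IV, including the monogon/simple-pole cases you mention only in passing) and lift one flip of $\colwsur$ to a \emph{finite sequence} of flips of $\sow$, and then prove --- via the three conditions of Remark 3.3 of \cite{KQ1}, with type I covered by Theorem 5.9 of \cite{BMQS} and types II--IV tracked through the explicit tilting sequences --- that the composite sequence of simple tiltings descends to a \emph{single} simple tilting of the quotient heart at $\overline{S}$. Your proposal contains no substitute for this lifting-and-descent argument; the ``compatibility of simple-minded collections with \eqref{eq:intrses}'' that you invoke (and yourself flag as the main obstacle) is exactly the statement that has to be proved and is not automatic for a Verdier quotient.

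Two further points. First, fixing one mixed-angulation $\Tri_{\subsur}$ of $\subsur$ and claiming a bijection between mixed-angulations of $\sow$ containing $\Tri_{\subsur}$ and all mixed-angulations of $\colwsur$ is not how the vertex correspondence works: different $\uA\in\EGb(\colwsur)$ require different refinements of the subsurface part (the paper's \Cref{prop:vertex} only needs \emph{some} refinement, and the refinement genuinely varies along flips). Second, $\pi$ is not claimed to hit all finite hearts of $\pvq$; $\EGb(\pvq)$ is by definition the subgraph of quotient-type hearts, and surjectivity of the vertex map is onto that principal part only. Your closing remark that one could instead use the quotient perverse schober is indeed the paper's alternative route (Section 4), but as written your combinatorial argument does not prove the theorem.
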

It generalizes Barbieri-M\"{o}ller-Qiu-So's results slightly to the situation where, moreover, simple poles are allowed. The point of the proof is to lift a forward flip in $\colwsur$ to a finite sequence of forward flips in $\sow$ (cf. \Cref{prop:refine_of_flip}), which corresponds to a finite sequence of simple tiltings in $\pv$ and it gives the associated simple tilting in the quotient category $\pvq.$ In particular, $\EGb(\pvq)$ is a union of connected components of $\EG(\pvq)$.

On the other hand, if the category $\D(\sow)$ associated to $\sow$ arises from some perverse schober $\F$ parametrized by some spanning graph $\SS$ of $\sow$, that is, it is given by the triangulated homotopy category of the $\infty$-category $\glo(\SS, \F)$ consisting of global sections of $\F$. Here $\SS$ is the dual graph of a mixed-angulation $\A$ of $\sow$. We restrict $\hh{F}$ to a $\eSsub$-parametrized perverse schober $\Fs$. Then we construct a quotient perverse schober $\Fquo$ parametrized by $\Squo$ (cf. \Cref{prop:sod}) and show the following theorem.
\begin{theorem}[\Cref{thm:exactseq}]\label{thm:mainthm2}
We have a cofiber sequence
\begin{equation}\label{intr:fibco}
\glo({\eSsub},\Fs)\to\glo({\SS},\F)\to\glo({\Squo},\Fquo)
\end{equation}
in the $\infty$-category of stable $\infty$-categories, which induces a short exact sequence of triangulated categories
\begin{equation}\label{intr:fibcotri}
\Ho\big(\glo({\eSsub},\Fs)\big)\to\Ho\big(\glo({\SS},\F)\big)\to\Ho\big(\glo({\Squo},\Fquo)\big)
\end{equation}
by passing to the homotopy categories.
\end{theorem}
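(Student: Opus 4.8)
The plan is to exhibit \eqref{intr:fibco} as a Verdier localization sequence of stable $\infty$-categories — that is, a sequence which is simultaneously a fiber and a cofiber sequence — and then to deduce \eqref{intr:fibcotri} from the standard fact that the homotopy category of a Verdier quotient of stable $\infty$-categories is the Verdier quotient of the homotopy categories, so that a localization sequence of stable $\infty$-categories descends to a short exact sequence of triangulated categories.

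Recall that $\glo(\SS,\F)$ is computed as a limit of the diagram $\F$ over the exit-path category of the ribbon graph $\SS$, and likewise for $\glo(\eSsub,\Fs)$ and $\glo(\Squo,\Fquo)$. The inclusion of the subgraph $\eSsub\hookrightarrow\SS$ provides a restriction functor $\glo(\SS,\F)\to\glo(\eSsub,\Fs)$, whose left adjoint — extension by zero — is fully faithful; the collapse map $\SS\to\Squo$ together with the pointwise Verdier quotient functors furnished by \Cref{prop:sod} provides the functor $\glo(\SS,\F)\to\glo(\Squo,\Fquo)$. Both functors are identities away from the collapsed region, so the composite $\glo(\eSsub,\Fs)\to\glo(\SS,\F)\to\glo(\Squo,\Fquo)$ is canonically null, and it remains to check that the first functor identifies $\glo(\eSsub,\Fs)$ with the kernel of the second and that the second is a Verdier localization.

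The essential input is local and is supplied by \Cref{prop:sod}: near each vertex of $\SS$ on $\partial\subsur$ — the only locus where the three graphs and schobers differ — the perverse schober restricts to a spherical adjunction, $\Fquo$ replaces the relevant vertex category by its Verdier quotient along the associated spherical functor, and this quotient sits inside a semiorthogonal decomposition whose complementary summand is generated by the image of $\Fs$. I would propagate this through the limit by a Mayer--Vietoris argument: decomposing $\SS$ as $\eSsub$ glued to the complementary subgraph along the edges crossing $\partial\subsur$ expresses $\glo(\SS,\F)$, $\glo(\eSsub,\Fs)$ and $\glo(\Squo,\Fquo)$ as fiber products over the common ``boundary'' data, and the localization sequence for the global sections then follows by combining the boundary-vertex semiorthogonal decompositions of \Cref{prop:sod} with compatibility of Verdier quotients and fiber products in the presence of adjoints. (Alternatively one can pull $\Fquo$ back and push $\Fs$ forward to realise everything as a pointwise fiber sequence of perverse schobers on $\SS$ and apply the limit-preserving functor $\glo(\SS,-)$.)

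The main obstacle is precisely this globalization step. \emph{A priori} $\glo(\Squo,\Fquo)$ carries only a comparison functor relating it to the Verdier quotient $\glo(\SS,\F)/\glo(\eSsub,\Fs)$, and one must verify this comparison is an equivalence — equivalently, that forming the Verdier quotient commutes with the limit defining global sections. This forces a check that the local semiorthogonal decompositions of \Cref{prop:sod} are compatible with the gluing data of $\F$ along $\partial\subsur$, so that they assemble into a semiorthogonal decomposition of the whole limit; carrying this out carefully — whether through a cofinality analysis of the exit-path categories or by reducing to the single-vertex case via known gluing results for perverse-schober global sections — is the technical heart of the proof. Granting it, \eqref{intr:fibco} is a Verdier localization sequence, and \eqref{intr:fibcotri} follows by passing to homotopy categories.
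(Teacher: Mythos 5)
There is a genuine gap, and it sits exactly where you place it yourself: the ``globalization step'' that you defer with ``Granting it'' \emph{is} the content of the theorem, so the proposal identifies the difficulty without resolving it. Two further points compound this. First, your reading of \Cref{prop:sod} is not quite what the paper proves: it is not a family of vertexwise semiorthogonal decompositions at the vertices of $\SS$ along $\partial\subsur$ that would then have to be checked compatible with the gluing data (the Mayer--Vietoris analysis you sketch), but a single semiorthogonal decomposition $(\Fquo,\hh{F}_0)$ of $\Squo$-parametrized perverse schobers, in which the auxiliary schober $\hh{G}$ carries the \emph{local limit} $\lim\F'$ at the one collapsed vertex $\vq$ and $\hh{F}_0$ is concentrated there with value $\lim\Fs$. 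Second, your claim that restriction $\glo(\SS,\F)\to\glo(\eSsub,\Fs)$ admits a fully faithful ``extension by zero'' left adjoint is asserted, not proved; in the paper the embedding of $\glo(\eSsub,\Fs)$ is obtained instead from the inclusion of schobers $\hh{F}_0\hookrightarrow\hh{G}$ over the collapsed graph, so no such adjoint needs to be constructed on $\SS$ itself.

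The paper closes the gap without any cofinality or gluing argument, using two inputs. (i) Since limits can be computed iteratively/locally, replacing the subgraph $\eSsub$ by the single vertex $\vq$ carrying $\lim\F'$ (which \Cref{propp:limscho} shows is again a perverse schober) does not change global sections, giving $\glo(\Squo,\hh{G})\simeq\glo(\SS,\F)$ and $\glo(\Squo,\hh{F}_0)\simeq\lim\Fs\simeq\glo(\eSsub,\Fs)$. (ii) A semiorthogonal decomposition of a graph-parametrized perverse schober induces a cofiber sequence of global-section $\infty$-categories (Remark 3.17 in \cite{Ccluster}); applied to the decomposition of \Cref{prop:sod} this yields $\glo(\Squo,\hh{F}_0)\to\glo(\Squo,\hh{G})\to\glo(\Squo,\Fquo)$, hence \eqref{intr:fibco}, and the passage to homotopy categories is then routine. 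To complete your route you would either have to invoke a result of type (ii) together with an identification of type (i), or genuinely prove that the Verdier quotient commutes with the limit defining global sections; as written, the argument stops short of this.
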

The theorem gives a variation of \Cref{thm:mainthm}. As an application, we obtain an isomorphism of complex manifolds
\[
\iota:\on{FQuad}^{\bullet}(\colwsur)\to\on{Stab}^{\bullet}(\pvq)
\]
from the moduli space of framed quadratic differentials associated to the collasped surface to the complex manifold of stability conditions of the associated Verdier quotient (cf. \Cref{thm:quadstab}).
%=========================================================
\subsection{Contents}
%=========================================================
In \Cref{sec:2}, we introduce some basic knowledge on weighted DMSs, quadratic differentials on a Riemann surface and stability conditions on a triangulated category. In \Cref{sec:3}, we give a categorical description of the collapsed surface and show an isomorphism between two exchange graphs (cf. \Cref{thm:EGiso}). In \Cref{sec:4}, we briefly introduce the perverse schober and its semiorthogonal decomposition, construct the quotient perverse schober for the collapsed surface and provide a variation of \Cref{thm:EGiso}. In \Cref{sec:5}, we give a correspondence between the moduli space of framed quadratic differentials and the complex manifold of stability conditions of the associated category of the collapsed surface as an application.
%=========================================================
\subsection{Acknowledgement}
%=========================================================
We are grateful to Yu Qiu for suggesting this problem and for many helpful discussions. Part of this work was written during Fl's six-month research visit at Universit\'e Paris Cit\'e under the supervision of Bernhard Keller. Fl thanks to Merlin Christ for thoughtful discussions on perverse schobers on surfaces and Bernhard Keller for precious comments. This work is supported by National Natural Science Foundation of China (Grant No. 12425104).
%=========================================================
\section{Backgrounds on surfaces with quadratic differentials}\label{sec:2}
%=========================================================
%=========================================================
\subsection{Graded weighted decorated marked surfaces}\label{sec:sow}
%=========================================================
We follow \cite{HKK,CHQ} to introduce the graded weighted DMS, where some extra data are added. Let $\surf$ be a smooth, compact oriented 2-dimensional manifold with nonempty boundary $\partial\surf$. For simplicity, we may assume that $\surf$ is also connected. An \emph{arc} $c$ in $\surf$ is a curve $c:[0,1]\to \surf$ such that $c(t)\in\surfi$ for any $t\in(0,1)$, where $\surfi:=\surf\backslash\partial\surf$. The \emph{inverse} $\overline{c}$ of $c$ is defined as $\overline{c}(t)=c(1-t)$ for any $t\in[0,1]$. If the endpoints of $c$ coincide, we require that $c$ is not homotopic to a point in $\surf$ relative to its endpoints. In this paper, we always consider arcs up to taking inverse and homotopy relative to endpoints.
%=======================================================
\paragraph{\textbf{Graded surfaces and graded curves}}
%=======================================================
\begin{definition}
We call $\surf$ a \emph{graded surface} if there is a foliation $\nu$ on it, which is a section of the projectivized tangent bundle $\mathbb{P}(T\surf)$. A morphism between graded surfaces is an orientation preserving local diffeomorphism which is compatible with the foliations. 
\end{definition}
We take more interest in the case when grading structures emerge as horizontal foliations of quadratic differentials. The foliation $\mathrm{hor}(\varphi)$ of $\mathbf{S}$ is defined away from the zeros and poles of a quadratic differential $\varphi$ as 
\[
\mathrm{hor}(\varphi)(p) \coloneqq \{ v \in T_p \mathbf{S} \mid \varphi(v, v) \in \mathbb{R}_{\geq 0} \}  \in \mathbb{P}(T_p \mathbf{S}).
\]  
For an immersed loop \( c \colon S^1 \to \mathbf{S} \) on a graded surface \((\mathbf{S}, \nu)\), the \textit{Maslov index} \(\mathrm{ind}_\nu(c)\) counts the number of times that the grading \(\nu\) becomes tangent to \(c\), normalized as an unsigned integer. At a point \( p \in \surfi\), we define \(\mathrm{ind}_\nu(p) \coloneqq \mathrm{ind}_\nu(c) \), where \(c\) is a small clockwise loop around \( p \). When a surface has a fixed foliation, we often omit \(\nu\) and use \(\indd(p)\) or \(\indd(c)\) instead. The Maslov index of a loop \(c\) coincides with its winding number, given by the angle change integral in the sense of Definition 6.23 in \cite{IQ2}.

For a curve $c:[0,1]\to \surf$, there is a canonical section $s_c: c\to\mathbb{P}(T\surf)$ given by $s_c(z)=T_zc$. A \emph{grading} on $c$ is given by a lift $\lc$ of $s_c$ to $\mathbb{R}(T\surf)$. The pair $(c,\lc)$ is called a \emph{graded curve}, and we usually denote it by $\lc$. Notice that there are $\ZZ$ many lifts of $c$.
\begin{definition}\label{def:gradedcurve}
For two graded curves $\lc_1$ and $\lc_2$ intersecting transversely at a point $p=c_1(t_1)=c_2(t_2)$, the \emph{intersection index} $\ind_p(\lc_1,\lc_2)$ is defined as $\lc_1(t_1)\cdot\kappa\cdot\lc_2(t_2)^{-1}$, representing the class in $\pi_1(\mathbb{P}(T_p\surf))=\mathbb Z$. Here, $\kappa$ is the path from $\dot{c}_1(t_1)$ to $\dot{c}_2(t_2)$, determined by clockwise rotation in $T_p\surf$ by an angle less than $\pi$.
\end{definition}
\begin{lemma}[{\cite[Lemma 3.10 and Proposition 3.12]{FQ}}]\label{lem:3intindec}
The arcs appearing below are all graded arcs in $\surf$ and these intersection points may be either endpoints or points in the interior of the arcs.
\begin{itemize}
\item If $\wa,\wt,\ws$ share the same point $p$ and sitting in clockwise order, see the left and middle one in \Cref{fig:3int1}, then we have
\begin{equation}\label{eq:3intindec}
\ind_{p}(\wa,\ws)=\ind_{p}(\wa,\wt)+\ind_{p}(\wt,\ws).
\end{equation}
\item If $\wa_1,\wa_2,\ldots,\wa_n$ form a contractible triangle, where there are no boundaries and genus, see the right on in \Cref{fig:008}, then we have
\begin{equation}\label{eq:sum1}
\ind_{p_1}(\wa_1,\wa_2)+\ind_{p_2}(\wa_2,\wa_3)+\cdots+\ind_{p_n}(\wa_n,\wa_1)=n-2.
\end{equation}
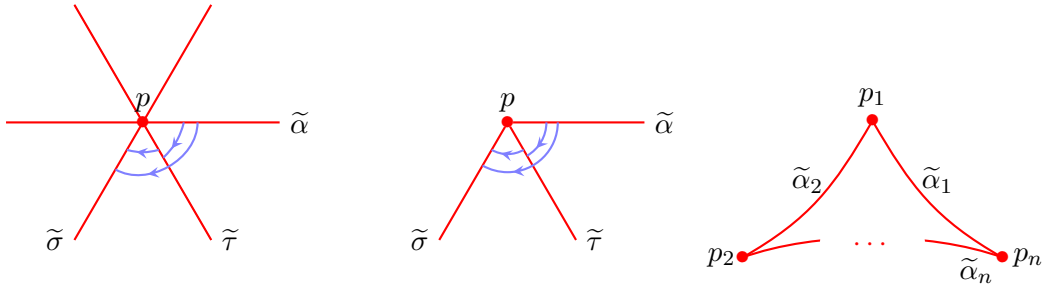
\begin{figure}[h]\centering
	\begin{tikzpicture}[scale=.6]
	\foreach \j in {0,1,2}{
		\draw[red,thick] (180+120*\j:3)to(120*\j:3);}
	\draw[red](120:3)to(-60:3) (0,0)node{$\bullet$};
\draw(-60:3)node[right]{$\wt$}  (3,0)node[right]{$\wa$}(-120:3)node[left]{$\ws$};
	\draw[blue!50,thick,->-=.7,>=stealth]	(0:1.2)to[bend left=60](-120:1.2);
	\draw[blue!50,thick,->-=.7,>=stealth]	(0:.9)to[bend left=15](-60:.9);
	\draw[blue!50,thick,->-=.7,>=stealth]	(-60:.7)to[bend left=15](-120:.7);
	\draw(0,0)node[above]{$p$};
\begin{scope}[shift={(8,0)}]
\foreach \j in {0,1,2}{\draw[red, thick] (240+60*\j:0)to(240+60*\j:3);}
\draw[white] (0,0)node{$\bullet$};
\draw[red] (0,0)\nn;
\draw(300:3)node[right]{$\wt$} (3,0)node[right]{$\wa$}(-120:3)node[left]{$\ws$};
\draw[blue!50,thick,->-=.7,>=stealth]	(0:1.1)arc(0:-120:1.1)(-120:1.1);
\draw[blue!50,thick,->-=.7,>=stealth](0:.85)arc(0:-60:.85)(-60:.85);
\draw[blue!50,thick,->-=.7,>=stealth](-60:.7)arc(-60:-120:.7)(-120:.7);
\draw (0,0)node[above]{$p$};
\end{scope}
\begin{scope}[shift={(16,3)},scale=.57]
	\draw[red,thick] (-90:5.2) .. controls +(-120:3) and +(30:3) .. (-5,-10.5);
	\draw[red,thick] (-90:5.2) .. controls +(-60:3) and +(150:3) .. (5,-10.5);
	\draw[red, thick] (-5,-10.5) .. controls +(20:3) and +(160:3) .. (5,-10.5);
\draw[white,fill = white](0,-10) circle (2);
		\draw[red] (-90:5.2)node{$\bullet$};
	\draw[red] (-5,-10.5)node{$\bullet$};
	\draw[red] (5,-10.5)node{$\bullet$};
    \draw (4,-11) node {$\wa_n$} ;
    \draw (2.5,-7.5) node {$\wa_1$} ;
    \draw(-2.5,-7.5) node {$\wa_2$} ;
    \draw (0,-4.3) node {$p_1$} ;
    \draw (-5.8,-10.5) node {$p_2$} ;
    \draw (6,-10.5) node {$p_n$} ;
\draw[red] (0,-10)node{$\cdots$};
\end{scope}
	\end{tikzpicture}
	\caption{Graded arcs intersect at some points}\label{fig:3int1}
\end{figure}
\end{itemize}
    \end{lemma}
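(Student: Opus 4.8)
The plan is to deduce both identities from a single ``angle formula'' for the intersection index, obtained by unwinding \Cref{def:gradedcurve} inside the universal cover $\mathbb{R}(T_p\surf)\cong\mathbb{R}$ of the circle $\mathbb{P}(T_p\surf)$ of tangent lines at the common point $p$. For a graded arc through $p$ write $g\in\mathbb{R}(T_p\surf)$ for the value at $p$ of its grading (a chosen lift of its tangent line), and, for an ordered pair of graded arcs meeting transversally at $p$, let $\delta\in[0,\pi)$ be the clockwise angle carrying the tangent line of the first onto that of the second --- i.e. the image in $\mathbb{P}(T_p\surf)$ of the path $\kappa$ of \Cref{def:gradedcurve}. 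Lifting $\kappa$ to $\mathbb{R}(T_p\surf)$ starting at $g_1$ and comparing its endpoint with $g_2$ (both lie over the tangent line of the second arc) yields
\[
\ind_p(\lc_1,\lc_2)=\tfrac{1}{\pi}\bigl(g_2-g_1+\delta\bigr)\in\mathbb{Z}.
\]
One must first orient the two arcs so that the rotation ``by an angle less than $\pi$'' of \Cref{def:gradedcurve} exists, which is harmless since $\ind_p$ is independent of the chosen orientations.

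For \eqref{eq:3intindec}, suppose $\wa,\wt,\ws$ meet $p$ in clockwise order and span an angle $<\pi$ (the configurations of \Cref{fig:3int1}). Orienting all three arcs into a common open half-plane, the clockwise gaps add with no wrap-around, $\delta(\wa,\ws)=\delta(\wa,\wt)+\delta(\wt,\ws)$, so the angle formula gives
\[
\ind_p(\wa,\wt)+\ind_p(\wt,\ws)=\tfrac1\pi\bigl(g_\sigma-g_\alpha+\delta(\wa,\wt)+\delta(\wt,\ws)\bigr)=\tfrac1\pi\bigl(g_\sigma-g_\alpha+\delta(\wa,\ws)\bigr)=\ind_p(\wa,\ws),
\]
the grading $g_\tau$ of the middle arc cancelling. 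The case where $p$ is an endpoint rather than an interior point is the identical computation.

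For \eqref{eq:sum1}, the hypothesis that the $\wa_i$ bound a disk with ``no boundary and no genus'' means the foliation $\nu$ is nonsingular on that disk; such a foliated disk is carried by a foliation-preserving homeomorphism onto a planar disk with the horizontal foliation, and a further isotopy fixing the $n$ vertices straightens the $\wa_i$ into the sides of a convex Euclidean polygon. As $\ind$ is an isotopy invariant we may work in this normal form, where each $\wa_i$ has constant grading $\gamma_i$; orienting $\wa_i$ and $\wa_{i+1}$ outward from $p_i$, the quantity $\delta$ at $p_i$ is exactly the interior angle $\beta_i$, so $\ind_{p_i}(\wa_i,\wa_{i+1})=\tfrac1\pi(\gamma_{i+1}-\gamma_i+\beta_i)$. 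Summing cyclically over $i=1,\dots,n$, the gradings telescope to zero and there remains $\tfrac1\pi\sum_{i=1}^n\beta_i=\tfrac1\pi(n-2)\pi=n-2$, by the elementary angle sum of a convex $n$-gon.

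The genuine difficulty is the orientation/sign bookkeeping: one must check that the arcs can always be oriented so that the ``angle less than $\pi$'' clause is met, and --- crucially --- that the resulting $\delta$ at a polygon vertex is the interior angle $\beta_i$ rather than the exterior angle $\pi-\beta_i$, which would spoil the total. I would fix all signs once on the triangle $n=3$ by a direct computation in a flat chart, where \eqref{eq:sum1} reads $\ind_{p_1}(\wa_1,\wa_2)+\ind_{p_2}(\wa_2,\wa_3)+\ind_{p_3}(\wa_3,\wa_1)=1$; the general case then carries no further content beyond the planar angle sum. A safer alternative for \eqref{eq:sum1} is induction on $n$: cut the disk along a diagonal into a triangle and an $(n-1)$-gon, apply the inductive hypothesis to both, and merge the two intersection indices created at each endpoint of the diagonal using \eqref{eq:3intindec} --- after convexifying, the diagonal lies strictly inside each interior angle it meets, so the clockwise-order hypothesis of the first identity applies --- thereby reducing everything to the $n=3$ computation together with \eqref{eq:3intindec}.
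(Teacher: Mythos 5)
The paper itself offers no argument for this lemma --- it is quoted verbatim from [FQ, Lemma 3.10 and Proposition 3.12] --- so your proposal must stand on its own. Your basic device, the lifted angle formula $\ind_p(\lc_1,\lc_2)=\tfrac1\pi(g_2-g_1+\delta)$, is sound, and for three arcs crossing transversally at an interior point it does give \eqref{eq:3intindec} with no extra condition (there $\delta$ is an angle between tangent \emph{lines}, so additivity modulo $\pi$ is automatic). The gap opens exactly where you wave it away: shared endpoints. \Cref{def:gradedcurve} only defines the index for transverse interior intersections; at a common endpoint (a decoration or marked point) the index must be computed with the \emph{actual} clockwise sector angle swept from one tangent ray to the other, and at a decoration $z$ that angle lives in a cone of total angle $(\w(z)+2)\pi$ and can well exceed $\pi$. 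Your restriction ``span an angle $<\pi$'' is therefore not a harmless normalization: under the cap-at-$\pi$ reading both identities genuinely fail as soon as one sector exceeds $\pi$ (the two sides then differ by an integer), whereas the paper uses the lemma in precisely that generality --- the remark after \Cref{def:EG} at marked points of a mixed-angulation, and \Cref{rmk:ind}, where one corner of the quadrilateral is a decoration carrying index $1-d$ for arbitrary $d$. With the correct sector-angle definition the first bullet is immediate because sector angles literally add; but that definition has to be stated and used, not dismissed as ``the identical computation''.

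The same issue undermines your normal form for \eqref{eq:sum1}. The corners $p_i$ are decorations, i.e.\ singularities of $\nu$ with cone angle $(\w(p_i)+2)\pi$, so the interior angle of the polygon at a corner may be $\geq\pi$ (even $>2\pi$); such a foliated disk cannot be carried onto a planar disk and then straightened into a \emph{convex} Euclidean polygon, and at exactly such corners the identification ``$\delta_i$ = interior angle $\beta_i$'' is incompatible with the $<\pi$ clause --- this is the sign worry you flag yourself, and fixing conventions on one convex triangle does not resolve it. Your inductive variant inherits the problem, since it again invokes convexity to place the diagonal inside the interior angles. Note also that ``no boundaries and genus'' must be supplemented by the absence of interior decorations of nonzero weight: your gloss that $\nu$ is nonsingular on the disk is an extra hypothesis doing real work (an interior singularity shifts the sum by its index), and it is in any case false at the corners. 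The robust repair is to keep your angle formula but drop the straightening: sum the grading increments of each $\wa_i$ along its side together with the honest sector angles $\delta_i$ at the corners, and identify the total with the turning of $\partial D$ relative to $\nu$ --- a Gauss--Bonnet/winding-number computation (the angle-integral formulation of the Maslov index already invoked via [IQ2, Definition 6.23]) that is insensitive to the size of the corner angles and needs only that $\nu$ be nonsingular in the interior of the disk. That is essentially the argument of the cited reference; as written, your proof establishes the lemma only for convex configurations with regular corners, which is strictly less than what the paper needs.
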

\begin{definition}\label{def:smoothingout}
Let $\wa$ and $\wb$ be two graded arcs. We define the graded arc $\tg=\wa\wedge_{z_1}\wb$ as the smoothing out $\wa\cup\wb[n]$ at $\wa(0)=\wb(0)$ along $z_1$, connecting $\wa(1)$ and $\wb(1)$, cf. \Cref{fig:008}, where $n=\ind_{z_1}(\wa,\wb)$.% and whose grading inherits the grading of $\wa$.  
\begin{figure}[htbp]
	\begin{tikzpicture}[scale=.35]
	%\draw[ultra thick]plot [smooth,tension=1] coordinates {(3,-4.5) (-90:5.2) (-3,-4.5)};
	%\draw[ultra thick]plot [smooth,tension=1] coordinates {(-6,-9) (-5,-10.5) (-4.3,-12.5)};
	%\draw[ultra thick]plot [smooth,tension=1] coordinates {(6,-9) (5,-10.5) (4.3,-12.5)};
	\draw[red, thick] (-90:5.2) .. controls +(-120:3) and +(30:3) .. (-5,-10.5);
	\draw[red,thick] (-90:5.2) .. controls +(-60:3) and +(150:3) .. (5,-10.5);
	\draw[cyan, ultra thick] (-5,-10.5) .. controls +(20:3) and +(160:3) .. (5,-10.5);
		\draw[red] (-90:5.2)\nn;
	\draw[red] (-5,-10.5)\nn;
	\draw[red] (5,-10.5)\nn;
    \draw (-90:10.8) node {$\tg$} ;
    \draw (2.5,-7.5) node {$\wa$} ;
    \draw (-2.5,-7.5) node {$\wb$} ;
    \draw (0,-4.3) node {$z_1$} ;
    \draw (-5.8,-10.5) node {$z_2$} ;
    \draw (5.8,-10.5) node {$z_3$} ;
	\end{tikzpicture}
	\caption{Contractible triangle}\label{fig:008}
    \end{figure}
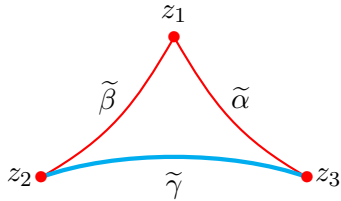
\end{definition}
We notice that in this case, $\wa,\wb$ and $\wg$ form a contractible triangle in $\surf$.
%=======================================================
\paragraph{\textbf{Weighted decorated marked surface}}
%=======================================================
\begin{definition}\label{def:wdms}
A \emph{weighted decorated marked surface} (weighted DMS for short) is the data $\sow=(\surf,\M,\W,\w)$ where
\begin{itemize}
\item $\surf$ is equipped with a grading $\nu$ on $\surf\setminus (\M\cup \W)$,
\item $\M\subset {\partial\surf}$ is a finite non-empty subset of \textit{marked points},
\item $\W\subset \surf$ is a finite non-empty subset of \textit{decorations},
\item $\w\colon \W\to \ZZ_{\geq -1}$ is the \textit{weight function};
\end{itemize}
such that
\begin{itemize}
%\item $\w(x)=\infty$ if and only if $x\in\partial \surf$, 
\item $\M\cap \W=\emptyset$,
\item each boundary component of $\sow$ has at least one marked point in $\M$,
%\item the set $\M$ has countably infinitely many points on each non-compact component of $\partial \surf\setminus \W$,
% and 
\item for each $x\in\W\cap \surfi$, we have that $\mathrm{ind}_\nu(x)$ equals to $\w(x)+2$.
\end{itemize}
\end{definition}

We call an arc \emph{open} (resp. \emph{closed}) if its endpoints are in $\M$ (resp. $\Tri$). A closed arc is said to be \emph{simple} if there is no self-intersection except at endpoints. 
%=======================================================
\paragraph{\textbf{Mixed-angulation and its dual graph}}
%=======================================================
\begin{definition}
A \emph{mixed-angulation} $\A$ of a weighted DMS $\sow$ is a finite set of graded open arcs on $\sow$ such that no two of them (including self-pairs) intersect within $\surfi\backslash\Tri$. These arcs divide $\sow$ into once-decorated polygons, where each decoration $z$ with weight $\w(z)$ is enclosed in a $(\w(z)+2)$-gon. We denote such a $(\w(z)+2)$-gon by $\A(z)$ and refer to as an $\A$-polygon. Furthermore, if $\widetilde{\gamma}_1$ and $\widetilde{\gamma}_2$ are two adjacent edges of an $\A$-polygon, lying in clockwise order and intersecting at some point $p$ in $\M$, then their index satisfies $\ind_p(\widetilde{\gamma}_1,\widetilde{\gamma}_2) = 0$.
\end{definition}
An arc $\wg$ in $\A$ is called a \emph{monogon arc} if the area bounded by $\wg$ is homotopic to a disk. In this case, there are no other arcs cutting the angle in this monogon. For example, the light blue arc in the left one in \Cref{fig:flip2} is a momgon arc while one in the left one in \Cref{fig:flip1} is not.
\begin{proposition}\label{prop:combform}
Suppose that $\sow$ is of genus $g$ with $m$ marked points and $b$ boundary components. If the mixed-angulation $\A$ give a decomposition of $\sow$ into $k$ polygons where the $i$-th one has $s_i$ sides and a decoration with weight $w_i:=s_i-2$. Then we have
\begin{equation}\label{eq:combform}
\sum_{i=1}^k w_i - (m + 2b)=4g-4.
\end{equation}
\end{proposition}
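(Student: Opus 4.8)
The plan is a two-way Euler characteristic count. First I would promote the mixed-angulation to a CW decomposition of $\surf$: take as $0$-cells the $m$ marked points, as $1$-cells the $n$ arcs of $\A$ together with the $m$ boundary segments cut out of $\partial\surf$ by $\M$ (there are exactly $m$ of these, one per marked point), and as $2$-cells the $k$ once-decorated polygons. By the definition of a mixed-angulation each such polygon is an honest disk — a $(w_i+2)$-gon — so it is a single $2$-cell, and the interior decorations play no role in the cell structure. Computing $\chi$ from this decomposition gives $\chi(\surf)=m-(n+m)+k=k-n$, while from $g$ and $b$ we have $\chi(\surf)=2-2g-b$; hence $n=k-2+2g+b$.

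Second I would count polygon sides globally. Each arc of $\A$ lies in $\surfi$ away from its endpoints, so across the boundaries of all the polygons it is traversed exactly twice (once from each side, counted with multiplicity if the same polygon lies on both sides), whereas each of the $m$ boundary segments is traversed exactly once. Therefore $\sum_{i=1}^{k} s_i = 2n+m$. Since $w_i=s_i-2$, this gives $\sum_{i=1}^{k} w_i = 2n+m-2k$, and substituting the value of $n$ from the first step yields $\sum_{i=1}^{k} w_i = 2(k-2+2g+b)+m-2k = m+2b+4g-4$, which rearranges to \eqref{eq:combform}.

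I do not expect a genuine obstacle here; the only points needing a little care are (i) that the complement of $\A\cup\partial\surf$ is really a disjoint union of open disks, so that the $2$-cells are bona fide cells — but this is exactly the content of ``$\A$ divides $\sow$ into once-decorated polygons'' — and (ii) the bookkeeping in the degenerate cases (monogon arcs, arcs that are loops based at a single marked point, and boundary circles carrying only one marked point), where one checks directly that an interior arc still contributes $2$ and a boundary segment still contributes $1$ to $\sum_{i} s_i$, and that the number of boundary segments is $m$. An alternative, perhaps cleaner, route is to build $\surf$ abstractly by gluing the $(w_i+2)$-gons along the arcs of $\A$ and along $\partial\surf$ and to compute $\chi$ of the resulting surface directly; this sidesteps any worry about whether every marked point is actually an endpoint of some arc of $\A$.
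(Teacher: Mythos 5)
Your proof is correct and follows essentially the same route as the paper: the same cell decomposition ($V=m$, $E=I+m$ with $I$ the number of internal arcs, $F=k$) combined with the same side count $\sum_i s_i = 2I+m$, differing only in how the resulting identities are combined algebraically. No further comment is needed.
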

\begin{proof}
Each polygon side is either an internal arc (counted twice) or a boundary segment (between consecutive marked points, counted once), so we have
$$\sum_{i=1}^k s_i = 2I + m,$$
where $I$ is the number of internal arcs. Hence
$$\sum_{i=1}^k w_i=\sum_{i=1}^k (s_i-2)=2I+m-2k.$$
Euler’s formula for the cell decomposition with $V=m$, $E=I+m$, $F=k$ gives
$$V-E+F = m-(I+m)+k = k-I = \chi(\sow)=2-2g-b,$$
where $\chi(\sow)$ is the Euler characteristic of $\sow$. Therefore, we have
$$\sum_{i=1}^k w_i-(m+2b)=\bigl(2I+m-2k\bigr)-(m+2b)=2(I-k-b)=2(2g-2)=4g-4,$$
which completes the proof.
\end{proof}
% The boundary arcs are the \emph{boundary edges} and the others are the \emph{internal edges}.Boundary arc: the arcs on boundary cut out by marked points
\begin{definition}\label{def:wgflip}
Let $\AS$ be a mixed-angulation of $\sow$. The \emph{forward flip} $\Ags$ of $\A$ with respect to a graded arc $\wg\in\A$ whose underlying arc is defined as follows:
\begin{itemize}
\item If $\wg$ is not a monogon arc, the new open arc $\gs$ is obtained by moving the endpoints of $\wg$ counterclockwise along the $\A$-gons containing $\wg$, as shown in Figure \ref{fig:flip1}.
\item If $\wg$ is a monogon arc, the new open arc $\gs$ is obtained by moving both the endpoints of $\wg$ together counterclockwise along the $\A$-gons containing $\wg$, as shown in Figure \ref{fig:flip2}.
\end{itemize}
For the grading, we choose some grading of $\gs$ such that the index-zero condition is satisfied. We obtain a new mixed-angulation $\Ags$ by replacing $\wg$ with $\gs$. The \emph{backward flip} $\Agb$ is defined as the inverse operation of a forward flip.
\end{definition}
\begin{figure}[ht]\centering
\makebox[\textwidth][c]{
\begin{tikzpicture}[xscale=1.4,yscale=1.4]
\begin{scope}
\draw[very thick] (0,0) circle (2);
\draw[very thick,fill = gray!10](0,-.5) circle (0.5);
\draw[black] (0:2)\nn (90:2)\nn (150:2)\nn (210:2)\nn (270:2)\nn;
\draw[black] (0:0)\nn (270:1)\nn;
\draw[blue,thick] (90:2)to[out=-150,in=90] (180:1.5)to[out=-90,in=150] (270:2);
\draw[blue,thick] (90:2)to[out=-30,in=90] (0:1.5)to[out=-90,in=30] (270:2);
\draw[blue,thick] (0,-1)--(0,-2);
\draw[blue,thick] (0,0)to[out=120,in=90] (-.95,-.4) to[out=-90,in=120] (0,-2);
\draw[red,thick] (0,-.5) circle (0.7);
\draw[red,thick] (-1.6,0.6) -- (0,1.2) -- (1.6,0.6);
\draw[red,thick] (0,1.2)to[out=-50,in=80] (.7,-.5);
\draw[red,font=\scriptsize] (-1.6,0.6)\ww node[left]{$2$} (1.6,0.6)\ww node[right]{$0$} (0,1.2)\ww node[above]{$1$}(-.7,-.5)\ww node[left]{$1$} (.7,-.5)\ww node[right]{$2$};
\draw[cyan,thick](-90:2) to[out=145,in=-110] (-1,0) to[out=70,in=180] (0,.6) to[out=0,in=110] (1,0) to[out=-70,in=35] (0,-2);
\end{scope}
\draw(3,0)node{\Huge{$\rightsquigarrow$}};
\begin{scope}[shift={(6,0)}]
\draw[very thick] (0,0) circle (2);
\draw[very thick,fill = gray!10](0,-.5) circle (0.5);
\draw[black,thick] (0:2)\nn (90:2)\nn (150:2)\nn (210:2)\nn (270:2)\nn;
\draw[black] (0:0)\nn (270:1)\nn;
\draw[blue,thick] (90:2)to[out=-150,in=90] (180:1.5)to[out=-90,in=150] (270:2);
\draw[blue,thick] (90:2)to[out=-30,in=90] (0:1.5)to[out=-90,in=30] (270:2);
\draw[blue,thick] (0,-1)--(0,-2);
\draw[blue,thick] (0,0)to[out=120,in=90] (-.95,-.4) to[out=-90,in=120] (0,-2);
\draw[red,thick] (-.7,-.5) arc (180:360:.7);
\draw[red,thick] (-1.6,0.6) -- (0,1.2);
\draw[red,thick] (.7,-.5)-- (1.6,0.6);
\draw[red,thick] (0,1.2)to[out=-50,in=80] (.7,-.5);
\draw[red,thick] (0,1.2)to[out=-130,in=90] (-.7,-.5);
\draw[red,font=\scriptsize] (-1.6,0.6)\ww node[left]{$2$} (1.6,0.6)\ww node[right]{$0$} (0,1.2)\ww node[above]{$1$}(-.7,-.5)\ww node[left]{$1$} (.7,-.5)\ww node[right]{$2$};
\draw[cyan,thick](0,0) to[out=70,in=-90] (.3,1) to[out=90,in=-70] (0,2);
\end{scope}
\end{tikzpicture}}
\caption{The forward flip at a usual arc}
\label{fig:flip1}
\end{figure}
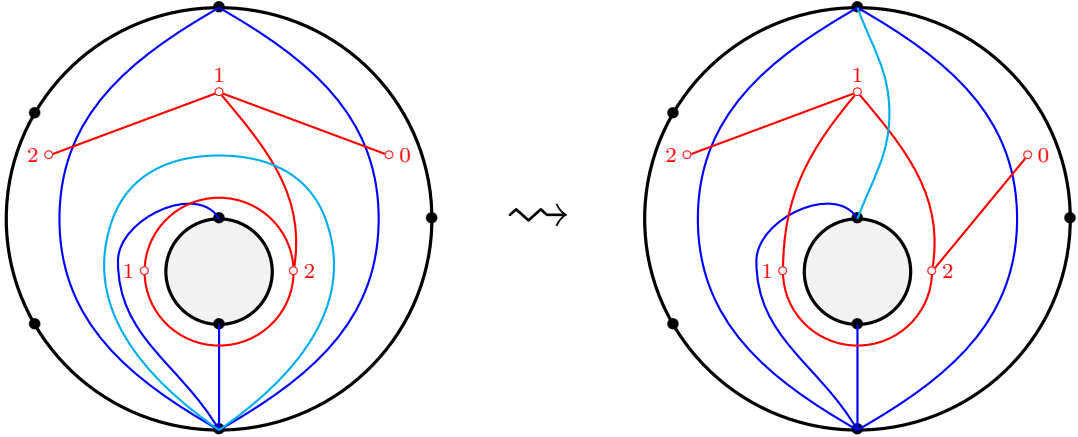

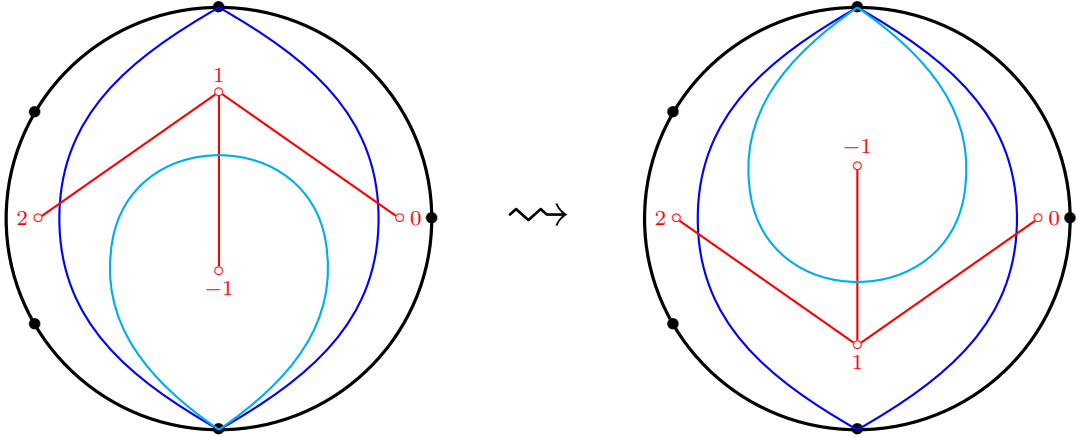
\begin{figure}[hb]\centering
\makebox[\textwidth][c]{
\begin{tikzpicture}[xscale=1.4,yscale=1.4]
\begin{scope}
\draw[very thick] (0,0) circle (2);
\draw[black] (0:2)\nn (90:2)\nn (150:2)\nn (210:2)\nn (270:2)\nn;
\draw[blue,thick] (90:2)to[out=-150,in=90] (180:1.5)to[out=-90,in=150] (270:2);
\draw[blue,thick] (90:2)to[out=-30,in=90] (0:1.5)to[out=-90,in=30] (270:2);
\draw[red,thick] (-1.7,0) -- (0,1.2) -- (1.7,0);
\draw[red,thick] (0,1.2)-- (0,-.5);
\draw[red,font=\scriptsize] (-1.7,0)\ww node[left]{$2$} (1.7,0)\ww node[right]{$0$} (0,1.2)\ww node[above]{$1$} (0,-.5)\ww node[below]{$-1$};
\draw[cyan,thick](-90:2) to[out=145,in=-100] (-1,-.2) to[out=80,in=180] (0,.6) to[out=0,in=100] (1,-.2) to[out=-80,in=35] (0,-2);
\end{scope}
\draw(3,0)node{\Huge{$\rightsquigarrow$}};
\begin{scope}[shift={(6,0)}]
\draw[very thick] (0,0) circle (2);
\draw[black] (0:2)\nn (90:2)\nn (150:2)\nn (210:2)\nn (270:2)\nn;
\draw[blue,thick] (90:2)to[out=-150,in=90] (180:1.5)to[out=-90,in=150] (270:2);
\draw[blue,thick] (90:2)to[out=-30,in=90] (0:1.5)to[out=-90,in=30] (270:2);
\draw[red,thick] (-1.7,0) -- (0,-1.2) -- (1.7,0);
\draw[red,thick] (0,-1.2)-- (0,.5);
\draw[red,font=\scriptsize] (-1.7,0)\ww node[left]{$2$} (1.7,0)\ww node[right]{$0$} (0,-1.2)\ww node[below]{$1$} (0,.5)\ww node[above]{$-1$};
\draw[cyan,thick](0,2) to[out=-145,in=100] (-1,.2) to[out=-80,in=180] (0,-.6) to[out=0,in=-100] (1,.2) to[out=80,in=-35] (0,2);
\end{scope}
\end{tikzpicture}
}
\caption{The forward flip at a monogon arc}
\label{fig:flip2}
\end{figure}
\begin{definition}\label{def:EG}
The \emph{exchange graph} $\EG(\sow)$ of a weighted DMS $\sow$ is a directed graph whose vertices correspond to mixed-angulations, with oriented edges corresponding to forward flips between them. We choose an initial mixed-angulation $\AS_0$ of $\sow$, and denote by $\EGp(\sow)$ the connected component of $\EG(\sow)$ that contains $\AS_0$. 
\end{definition}
\begin{remark}
When we do a flip with respect to some $\wg\in\AS_0$, we let $\gs$ be the smoothing out of $\wg[1], \wa$ and $\wb$ where $\wa$ and $\wb$ are two graded arcss in $\A$-gon containing $\wg$ whose endpoints are the endpoint of $\wg$ and $\gs$ respectively. By \Cref{lem:3intindec}, we have that any (clockwise) index between two adjacent arcs in $\AS_{0,\wg}^{\sharp}$ is zero. Thus, this phenomenon also holds in any $\AS\in\EGp(\sow)$. 
\end{remark}
We introduce some basic terminology on graphs. 
\begin{definition}
Let $\bG$ be a graph. 
\begin{itemize}
\item A \emph{half-edge} in $\bG$ is an oriented edge fragment that has only one endpoint, meaning along with its twin, defines one full edge. An edge is called \emph{internal} if it can be formed by pairing two half-edges and called \emph{external} if it is a remain unpaired half-edge.
\item $\bG$ is called a \emph{ribbon graph} if a cyclic order of half-edges around each vertex of $\bG$ is specified.
\item $\bG$ is called a \emph{spanning graph} of $\surf$ if the inclusions $\bG\subseteq\surf$ is a homotopy equivalence.
\end{itemize}
\end{definition}
Notice that the spanning graph of $\surf$ is automatically a ribbon graph where the cyclic order is given by the orientation of surface $\surf$.
\begin{definition}
The \emph{dual graph} $\SS=\A^*$ of a mixed-angulation $\A$ of $\sow$ is a S-graph \footnote{An S-graph is a graph where at each vertex, there is a cyclic or total order on the set of half-edges meeting at the vertex and a positive integer for each pair of successive half-edges. For more details, we refer to Definition 2.7 in \cite{CHQ}.} consisting of graded closed arcs in $\sow$ with intersections only at endpoints, such that each graded open arc $\wg$ in $\AS$ is \emph{dual} to a unique edge $\we$ of the S-graph in the sense that the two arcs intersect in exactly one point $z$ in the interior with $\ind_z(\wg,\we)=0$.
\end{definition}
\begin{remark}\label{rmk:ind}
Let $\wg$ and $\widetilde{\xi}$ be two graded open arcs in a mixed-angulation $\AS$ of $\sow$ with $\we$ and $\widetilde{\delta}$ the dual arcs of $\wg$ and $\widetilde{\xi}$ respectively. If the index satisfies $\ind(\wg,\widetilde{\xi})=d$, then by the second statement of \Cref{lem:3intindec}, the index of the closed arc satisfies $\ind(\widetilde{\delta},\we)=1-d$.
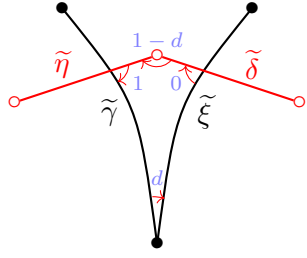
\begin{figure}
\begin{tikzpicture}[scale=1.25]
% 左右上凸曲线
\draw[thick] (0,-.5) .. controls (-.2,1) .. (-1,2);
\draw[thick] (0,-.5) .. controls (.2,1) .. (1,2);
\draw[thick] (-.5,.9)node{$\wg$} (.5,.9)node{$\widetilde{\xi}$};
\draw[thick,red] (-1,1.4) node{$\we$} (1,1.4) node{$\widetilde{\delta}$};
% 左右下直线
\draw[thick,red] (0,1.5) -- (-1.5,1);
\draw[thick,red] (0,1.5) -- (1.5,1);
\draw[blue!50,font=\scriptsize] (0,1.65)node{$1-d$} (0,0.2) node{$d$} (-.2,1.2)node{$1$} (.2,1.2) node{$0$};
% 四个角坐标（准确估算）
\coordinate (A) at (0,-.5); % 左下
\coordinate (B) at (0,1.5);      % 左上
\coordinate (C) at (-.5,1.5);       % 右上
\coordinate (D) at (.5,1.5);  % 右下

% 角小弧线，弧度大一点，顺时针
\draw[->, red] (B) ++(0.15,-0.05) arc (-40:-140:0.2); % 左上
\draw[->, red] (C) ++(0.2,-.1) arc (0:-65:0.2);    % 右上
\draw[->, red] (D) ++(-.1,-.3) arc (-120:-185:0.2); % 右下
\draw[->, red] (A) ++(-.05,0.5) arc (90:75:0.5);     % 左下
\draw[red](0,1.5)\ww (-1.5,1)\ww (1.5,1)\ww;
\draw(0,-.5)\nn (-1,2)\nn (1,2)\nn;
\end{tikzpicture}
    \caption{Index of open graded arcs induce index of their dual graded arcs}
    \label{fig:2.13}
\end{figure}
\end{remark}
\begin{figure}[ht]\centering
\begin{tikzpicture}[scale=.3]
  \draw[very thick,NavyBlue](0,0)circle(6)node[above,black]{$\we$};
  \draw(-120:5)node{+};
  \draw(-2,0)edge[red, very thick](2,0)  edge[cyan,very thick, dashed](-6,0);
  \draw(2,0)edge[cyan,very thick,dashed](6,0);
  \draw(-2,0)node[white] {$\bullet$} node[red] {$\circ$};
  \draw(2,0)node[white] {$\bullet$} node[red] {$\circ$};
  \draw(0:7.5)edge[very thick,->,>=stealth](0:11);%\draw(0:9)node[above]{$\Bt{\eta}$};
\end{tikzpicture}\;
%=======================================================
\begin{tikzpicture}[scale=.3,yscale=-1]
	\draw[very thick,NavyBlue](0,0)circle(6)node[above,black]{$\we$};
	\draw[red, very thick](-2,0)to(2,0);
	\draw[cyan,very thick, dashed](2,0).. controls +(0:2) and +(0:2) ..(0,-2.5)
	.. controls +(180:1.5) and +(0:1.5) ..(-6,0);
	\draw[cyan,very thick,dashed](-2,0).. controls +(180:2) and +(180:2) ..(0,2.5)
	.. controls +(0:1.5) and +(180:1.5) ..(6,0);
	\draw(-2,0)node[white] {$\bullet$} node[red] {$\circ$};
	\draw(2,0)node[white] {$\bullet$} node[red] {$\circ$};
	\end{tikzpicture}
\caption{The braid twist $\Bt{\we}$}
\label{fig:Braid twist}
\end{figure}
We denote the set of graded closed arcs on $\sow$ by $\uCA(\sow)$. For any closed arc $\we\in\uCA(\sow)$, there is a (positive) \emph{braid twist} $\Bt{\we}$ along $\we$, which is shown in Figure~\ref{fig:Braid twist}. More precisely, for a graded closed arc $\wa$, the braid twist action $B_{\we}(\wa)$ is the graded arc $\wa\wedge\we$.
\begin{definition}\label{rem:f.f.}
Let $\we\in\SS$ be dual to $\wg\in\AS$. We define the \emph{forward flip} of $\SS$ at $\we$ as the new set $\SS^\sharp_{\we}=(\Ags)^*:=\{\wa_{\we}^{\sharp}|\wa\in\SS\}.$
The arc $\we$ is still in $\SS^\sharp_{\we}$ but its grading will be shifted by $1$. For $\wa\neq\we\in\SS$, we flip it as follows.
\begin{itemize}
\item In the usual case, if $\wa$ satisfies $\ind_z(\wa,\we)=1$ for some endpoint $z\in\W$ of $\we$, that is, $\wa$ is the nearest counter-clockwise closed arc from $\we$ around $z$, then $\wa^{\sharp}_{\we}:=\wa\wedge_z\we(=B_{\we}(\wa))$, i.e. move the endpoint $z$ of $\wa$ along $\we$, cf. Figure~\ref{fig:flip1}. 
\item In the monogon case, if $\wa$ satisfies $\ind_z(\wa,\we)=1$ for some endpoint $z\in\W$ of $\we$, then $\wa^{\sharp}_{\we}:=B_{\we}^2(\wa)$, cf. Figure~\ref{fig:flip2}.
\end{itemize}
Similarly, we have the definition of \emph{backward flip}.
\end{definition}
%=========================================================
\subsection{Quadratic differentials}
%=========================================================
In this subsection, we refer to \cite{S} for the concepts of quadratic differentials.

For a compact Riemann surface $C$, let $K_C$ be the holomorphic cotangent bundle over $C$. A \emph{holomorphic/meromorphic quadratic differential} over $C$ is a holomorphic/meromorphic section of $K_C^{\otimes 2}$. Let $\varphi$ be a meromorphic quadratic differential on $C$. We denote the set of \emph{critical points} (or \emph{singularities}) of $\varphi$ by $\on{Crit}(\varphi)$, consisting of zeros and poles. The zeros and simple poles are called \emph{finite critical points}, while the poles of order $w\geq 2$ are called higher order poles, or \emph{infinite critical points}.

On the complement $C \backslash\on{Crit}(\varphi)$, $\varphi$ is holomorphic and non-vanishing. Moreover, $|\varphi|$ is a flat Riemannian metric on the complement $C \backslash \on{Crit}(\varphi)$, which means that $C \backslash\on{Crit}(\varphi)$ is locally isometric to an Euclidean plane under this metric. But this metric is typically not complete. In order to get its complement, only need to add a conical point with cone angle $(w+2)\pi$ at each finite critical point, cf. \cite{HKK} for more details. The horizontal foliation of $\varphi$, denoted by $\on{hor}(\varphi)$, is obtained by pulling back the horizontal foliation on the Euclidean plane.

A maximal straight arc for the metric $|\varphi|$ in a direction $\theta\in S^1$ is called a \emph{trajectory} in the direction $\theta$. Near each finite critical point, there will be $w+2$ distinguished directions that are limits of a trajectory, while near each infinite critical point, there will be $|w|-2$ distinguished directions that are limits of a trajectory, where $w$ is the order of a zero or pole. There are some interesting types of trajectories:
\begin{itemize}
\item A \emph{saddle connection} is a trajectory which converges to conical points in both directions.
\item A \emph{saddle trajectory} is a saddle connection in the horizontal direction.
\item A \emph{separating trajectory} is a trajectory which converges to a conical point in a direction and approaches to an infinite critical point in the other direction.
\item A \emph{recurrent trajectory} is a trajectory which is recurrent in at least one direction.
\item A \emph{generic trajectory} is a trajectory which approaches to infinite critical points in both directions.
\end{itemize}

Now we fix the directions to be the horizontal directions, and the trajectories are horizontal trajectories. 

The quadratic differential $\varphi$ is called \emph{saddle-free} if there is no saddle trajectory. In this case, there are only separating and generic trajectories. Each separating trajectory approaches to a conical point, and a conical point with cone angle $n\pi$ gives $n$ separating trajectories. These separating trajectories divide the surface $C$ into \emph{horizontal strips}. Each horizontal strip is isometric to $(0,a)\times\mathbb R$, where $a\in(0,+\infty]$ is called the \emph{height}. This gives a \emph{horizontal strip decomposition} of the surface. 

Let $(C,\varphi)$ be a flat surface, where $\varphi$ is a meromorphic quadratic differential on $C$. Then the \emph{weighted decorated real blow-up} of $(C,\varphi)$ is a wDMS $\sow=(\surf, \M, \W, \w)$ constructed as the following (cf. \cite{BMQS}):
\begin{itemize}
\item Each boundary component $\partial_p$ of $\sow$ corresponds to an infinite critical point $p$ of $\varphi$.
\item For each $\partial_p$, the number of marked points on $\partial_p$ equals to $|w|-2$, where $w$ is the order of the infinite critical point $p$.
\item Each decoration in the $\W$ corresponds to a finite critical point of $\varphi$, whose weight equals to the order of the corresponding critical point.
\end{itemize}
Furthermore, the foliation $\nu$ on the wDMS $\sow$ is the horizontal foliation of $\varphi$. In this case, $\sow$ becomes a graded wDMS. Let the weighted decorated real blow-up of $(C,\varphi)$ is denoted by $\sow(C,\varphi)$.

Now we recall the definition of $\sow$-framed quadratic differentials. Let $\sow$ be a wDMS. The \emph{moduli space of $\sow$-framed quadratic differentials} $\on{FQuad}(\sow)$ is defined as follows. A point in it is represented by a flat surface $(C,\varphi)$ with an isomorphism of wDMSs $f:\sow\to\sow(C,\varphi)$. Two points $(C,\varphi,f)$ and $(C',\varphi',f')$ are equivalent if there exists an isomorphism of wDMSs $g:\sow(C,\varphi)\to\sow(C',\varphi)$ such that $(f')^{-1}gf$ is isotopic to identity as morphisms between wDMSs.
%=========================================================
\subsection{Hearts and stability conditions}
%=========================================================
For foundational material on (bounded) t-structures, hearts, and tilting theory in triangulated categories, we refer to \cite{KQ1}. A triangulated category $\D$ is said to have a \emph{finite heart} $\h$ if $\h$ is of finite length and only has finitely many simple objects, denoted by $\Sim\h$. The \emph{(total) exchange graph} $\EG(\D)$ is an oriented graph where vertices represent hearts in $\D$, and directed edges correspond to simple forward tiltings between them. Here, we consider simple tiltings without rigid assumption.
\begin{propdef}[{\cite[Proposition A.2]{CHQ}}]\label{propdef}
Let $S$ be a simple object in a finite-length heart $\h \in \EG(\D)$. Then $S$ is functorially finite in $\h$. Furthermore, if $\h$ is finite, both forward and backward simple tiltings exist and remain finite, given by:
\begin{equation}\label{eq:forf}
\Sim\hs=\{S[1]\}\cup\{\pss(X) \mid X\in\Sim\h, X\neq S\}
\end{equation}
and
\begin{equation}
\Sim\hsb=\{S[-1]\}\cup\{\psb(X) \mid X\in\Sim\h, X\neq S\}.
\end{equation}
Here, $\pss(X):=\Cone(f)[-1]$ (resp. $\psb(X)=\Cone(g)$), where $f$ and $g$ denote the left and right minimal approximations of $X$ with respect to $\<S[1]\>$ and $\<S[-1]\>$, respectively.
\end{propdef}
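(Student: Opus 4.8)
The plan is to derive everything from the finiteness of $\h$ together with the theory of HRS tilts at a functorially finite torsion pair. First I would establish functorial finiteness. Since $\h$ has finite length, Schur's lemma shows $\End_\h(S)$ is a division ring, and an induction on composition length shows that $\Hom_\h(S,X)$ and $\Hom_\h(X,S)$ are finite-dimensional over it for every $X\in\h$; the evaluation morphism $S\otimes_{\End_\h(S)}\Hom_\h(S,X)\to X$ is then a minimal right $\on{add}(S)$-approximation, and dually one gets minimal left approximations, so $\on{add}(S)$ is functorially finite in $\h$, i.e. $S$ is functorially finite. A further length induction, using that $\mathcal F:=\on{Filt}_\h(S)$ (the objects all of whose composition factors are isomorphic to $S$) is a Serre subcategory, upgrades this to functorial finiteness of $\mathcal F$. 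Since a morphism from an object of $\h$ into the extension closure $\langle S[1]\rangle$ (which equals $\mathcal F[1]$) of $\D$, resp. into $\langle S[-1]\rangle=\mathcal F[-1]$, is for degree reasons recorded by an element of $\Ext^1_\h$, the universal extension furnished by functorial finiteness of $\mathcal F$ provides minimal left, resp. right, approximations, so $\pss(X)$ and $\psb(X)$ are well defined.

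Next I would set up the two torsion pairs at $S$. Since $\mathcal F$ is a functorially finite Serre subcategory of the length category $\h$, it is the torsion-free class of a torsion pair $(\mathcal T_f,\mathcal F)$ with $\mathcal T_f=\{X\mid\Hom_\h(X,S)=0\}$, and the torsion class of a torsion pair $(\mathcal F,\mathcal F_b)$ with $\mathcal F_b=\{X\mid\Hom_\h(S,X)=0\}$ (each $X$ has a largest $S$-filtered quotient and a largest $S$-filtered subobject). The associated HRS tilts $\hs:=\mathcal F[1]\ast\mathcal T_f$ and $\hsb:=\mathcal F_b\ast\mathcal F[-1]$ are bounded hearts of $\D$, and are by construction the forward and backward simple tilts of $\h$ at $S$; in particular these tilts exist. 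One also records that $\h\cap\hs=\mathcal T_f$ and $\h\cap\hsb=\mathcal F_b$.

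Then comes the identification of the simples, which is the heart of the proof. The subcategory $\mathcal F[1]$ is a Serre subcategory of $\hs$ equivalent to $\mathcal F$ (here one uses that $\mathcal F$, being Serre, stays closed under $\hs$-subobjects after the shift), whose unique simple object is $S[1]$; so $S[1]\in\Sim\hs$. For $X\in\Sim\h$ with $X\neq S$ one has $X\in\mathcal T_f$, and from the defining triangle $\pss(X)\to X\xrightarrow{f}Z\to\pss(X)[1]$ with $Z\in\mathcal F[1]$, taking $\h$-cohomology yields $\pss(X)\in\h$ together with an exact sequence $0\to Z[-1]\to\pss(X)\to X\to 0$ in $\h$, $Z[-1]\in\mathcal F$. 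Applying $\Hom_\D(-,S)$ and $\Hom_\D(-,S[1])$ to this triangle and using that $f$ is a \emph{minimal} left $\langle S[1]\rangle$-approximation (so $f^\ast\colon\Hom_\D(Z,S[1])\to\Hom_\D(X,S[1])$ is surjective and $f$ has no split summand), one shows $\Hom_\h(\pss(X),S)=0$, i.e. $\pss(X)\in\mathcal T_f\subseteq\hs$. To see $\pss(X)$ is simple in $\hs$, I would use that the Serre quotient $\hs\to\hs/\mathcal F[1]$ restricts on $\mathcal T_f$ to $\h\to\h/\mathcal F$ under the canonical equivalence $\h/\mathcal F\simeq\hs/\mathcal F[1]$ coming from the tilt: the exact sequence above shows $\pss(X)$ maps to the simple object $\overline X$, while minimality of $f$ forbids any nonzero subobject of $\pss(X)$ in $\mathcal F[1]$, so $\pss(X)$ is the unique simple lift of $\overline X$ and is therefore simple. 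Conversely, any simple of $\hs$ either lies in $\mathcal F[1]$ — hence equals $S[1]$ — or is torsion-free and maps to a simple of $\hs/\mathcal F[1]\simeq\h/\mathcal F$, i.e. to some $\overline X$ with $X\in\Sim\h\setminus\{S\}$, whence it equals $\pss(X)$. Thus $\Sim\hs=\{S[1]\}\cup\{\pss(X)\mid X\in\Sim\h,\ X\neq S\}$, which is finite; and $\hs$ is a length category because $\mathcal F[1]\simeq\mathcal F$ and $\hs/\mathcal F[1]\simeq\h/\mathcal F$ both are, so $\hs$ is again a finite heart. The statement for $\hsb$ follows dually, using the torsion pair $(\mathcal F,\mathcal F_b)$, right approximations, and the quotient $\h\to\h/\mathcal F$.

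I expect the main obstacle to be the non-rigid case inside the third paragraph: proving $\pss(X)\in\mathcal T_f$, dually $\psb(X)\in\mathcal F_b$, without assuming $\Ext^1_\h(S,S)=0$, where one really must exploit minimality — not merely universality — of the chosen approximation and control $\Hom_\h(\pss(X),S)$ through the long exact sequences. The accompanying bookkeeping that promotes indecomposability of $\pss(X)$ to simplicity in $\hs$ and yields completeness of the list of simples, via the comparison of the two Serre quotients and uniqueness of simple lifts through a torsion pair, is the second delicate point.
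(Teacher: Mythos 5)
First, a point of comparison: the paper does not prove this statement at all — it is quoted verbatim from \cite[Proposition A.2]{CHQ} and used as a black box — so your proposal can only be measured against the cited source, whose proof is precisely about the issue flagged below.

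Your skeleton (the torsion pair with torsion-free class $\mathcal F=\on{Filt}_{\h}(S)$ and torsion class $\mathcal F$ respectively, the HRS tilts $\hs=\mathcal F[1]\ast\mathcal T_f$ and $\hsb=\mathcal F_b\ast\mathcal F[-1]$, and the identification of simples via the Serre subcategory $\mathcal F[1]\subset\hs$ together with the equivalence $\hs/\mathcal F[1]\simeq\h/\mathcal F$ and unique simple lifts) is the standard and correct route, and the existence of the two torsion pairs from finite length alone is fine. The genuine gap is the well-definedness of $\pss(X)$ and $\psb(X)$, i.e.\ the \emph{existence} of (minimal) left/right approximations of $X$ with respect to $\<S[1]\>$ and $\<S[-1]\>$ in $\D$. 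You derive this from functorial finiteness of $\mathcal F$ inside $\h$, but that is a $\Hom$-level statement (maximal $\mathcal F$-subobject/quotient, which indeed exists by finite length), whereas a left $\<S[1]\>$-approximation lives in $\Hom_{\D}(X,\mathcal F[1])=\Ext^1_{\h}(X,\mathcal F)$: it is a universal extension, and "the universal extension furnished by functorial finiteness of $\mathcal F$" is a non sequitur. To build it one needs at least that $\Ext^1_{\h}(X,S)$ is finite over $\End_{\h}(S)$ — which does \emph{not} follow from $\h$ being of finite length with finitely many simples (your length induction controls $\Hom$'s, not $\Ext^1$'s; e.g.\ finite-length representations of a quiver with two vertices and infinitely many arrows give a finite heart where no such approximation of a simple exists), so some ambient finiteness of $\D$ must be invoked — and, since $S$ is not assumed rigid, $\<S[1]\>$ is strictly larger than $\on{add}(S[1])$, so the one-step universal extension by copies of $S[1]$ need not be an approximation: one must iterate and prove the tower terminates. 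That existence-and-termination argument is exactly the content of \cite[Proposition A.2]{CHQ} and is missing from your proposal; you correctly identify the non-rigid case as the delicate point, but you locate the difficulty in showing $\pss(X)\in\mathcal T_f$ (which, given the approximation triangle and minimality, is routine long-exact-sequence bookkeeping) rather than in producing the approximation in the first place. Once existence is granted, the rest of your argument (simplicity and completeness of the list, finiteness of $\hs$ from $\mathcal F[1]$ and $\hs/\mathcal F[1]$) goes through essentially as you describe.
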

Let $\C$ be a full subcategory of $\D$. According to Proposition 2.20 in \cite{AGH}, if $\h_\D$ and $\h_{\C}$ are the respective hearts of the triangulated categories $\D$ and $\C$ satisfying some suitable conditions, then the Verdier quotient $\D/\C$ inherits a bounded t-structure with heart equivalent to $\h_\D/\h_{\C}$. When the heart on $\C$ arises by restriction, we refer to such a heart as being of \emph{quotient type} and \emph{quotient heart} for short.

We refer to \cite{B} for the concept of Bridgeland stability conditions on a triangulated category $\D$. A \emph{stability condition} $\sigma$ on $\D$ is a pair $(Z,\mathcal P)$, where $Z$ is a group homomorphism from the Grothendieck group $K(\mathcal D)$ to $\CC$, called the \emph{central charge}, and $\mathcal P$ consists of full additive subcategories $\mathcal P(\phi)$ for each $\phi\in \mathbb R$, called the \emph{slicings}, satisfying the following following conditions:
\begin{itemize}
\item For any nonzero object $E\in \mathcal P(\phi)$, we have $Z(E)=m(E)\on{e}^{\on{i}\pi\phi}$.
\item For any $\phi\in \mathbb R$, we have $\mathcal P(\phi +1) = \mathcal P(\phi)[1]$.
\item For $\phi_1>\phi_2$ and $A_i\in \mathcal P(\phi_i)\,(i=1,2)$, we have ${\rm Hom}_{\D}(A_1, A_2) = 0$.
\item For any nonzero object $E\in \D$, there is a \emph{Harder-Narasimhan filtration}, see \cite{B} for more details.
\end{itemize}

Moreover, $\sigma$ satisfies the \emph{support property} (cf. \cite{KS}). Let $\on{Stab}(\D)$ be denoted by the set of such stability conditions on $\D$. The set $\on{Stab}(\D)$ is a complex manifold with local coordinate $Z\in \mathrm{Hom}_{\mathbb Z}(K(\mathcal D),\mathbb C)$. An object $E$ is called \emph{semistable} if $E\in \mathcal P(\phi)$ for some $\phi\in \mathbb R$. Moreover, $E$ is called \emph{stable} if it is a simple object in the abelian category $\mathcal P(\phi)$.

There is an alternative definition of a stability condition. A \emph{stability function} on an abelian category $\mathcal C$ is a group homomorphism $Z : K(\mathcal C)\to \mathbb C$, satisfying that for each nonzero object $E\in\mathcal C$, the complex number $Z(E)$ lies in the semi-closed upper half plane
\[
\mathbb H_{-}=\{r\on{e}^{{\rm i}\pi\phi}\,|\,r>0,\,0<\phi\le1\}\subset\mathbb C.
\]
According to Proposition 5.3 in \cite{B}, a stability condition can also be defined on a triangulated category $\D$ in terms of a pair $(\mathcal H,Z)$, where $\mathcal H$ is a heart of $\D$ and $Z$ is a stability function on $\mathcal H$ that satisfies the Harder-Narasimhan property.

The set $\rm{Stab}(\mathcal D)$ has a natural $\mathbb C$-action: for any $s\in\mathbb{C}$, we have
\[s \cdot (Z, \mathcal P) = (\on{e}^{\on{i}\pi s}Z, \mathcal P_{-\on{Re}(s)}),\]
where $\mathcal P_{-\mathrm{Re}(s)}(m) = \mathcal P(m-\mathrm{Re}(s))$ for any $m\in\mathbb R$. 

A connected component $\on{Stab}^{\bullet}(\D)$ of $\on{Stab}(\D)$ is called \emph{generic-finite} if there exists a connected component $\mathrm{EG}^{\bullet}(\D)$ of the exchange graph $\mathrm{EG}(\D)$, whose vertices are all finite hearts, such that
\[\on{Stab}^{\bullet}(\D)=\mathbb C\cdot\bigcup_{\mathcal H\in\mathrm{EG}^{\bullet}(\mathcal D)}\mathrm{U}(\mathcal H),\]
where $\mathrm{U}(\mathcal H)$ is the subspace in $\mathrm{Stab}(\mathcal D)$ consisting of those stability conditions $\sigma=(\mathcal H, Z)$ whose central charge $Z$ takes values in $\mathbb H=\{z\in\mathbb C\ |\ \mathrm{Im}(z)>0\}$.

%=========================================================新的东西
\section{Isomorphim between two exchange graphs on the collapsed surface}\label{sec:3}
%=========================================================
In the study by \cite{BS}, the authors established a correspondence between the spaces of stability conditions and the moduli spaces of framed quadratic differentials with simple zeros. Building upon this foundation, the research in \cite{BMQS} extended this framework to include quadratic differentials with zeros and higher-order poles, as a boundary strata of the compactification of the spaces of stability conditions. In the present work, we generalize the case where surfaces with simple poles and prove the isomorphism between two exchange graphs. To achieve this, we first state the concept of collapsing subsurfaces.
%=========================================================
\subsection{Collapse of subsurfaces}\label{sec:collapse}
%=========================================================
Let the weighted DMS $\subsur$ be a subsurface of $\sow$, consisting of connected components $\Sigma_i$. We define the boundary of each component as $\partial\Sigma_i = \cup_j c_{ij}$, where each $c_{ij}$ is a (simple closed) curve is a boundary component of $\Sigma_i$.
\begin{assumption}\label{ass1}
We make the following assumptions on $\subsur$:
\begin{itemize}
\item These boundary $c_{ij}$ either lies within the interior of $\sow$ or corresponds to a connected component of $\partial\sow$. 
\item $\subsur$ can be embedded into $\sow$ in the following way: For the boundary component $c_{ij}$ of $\subsur$ which is a simple closed curve in $\sow$, the number of marked points on it equals to its winding number in $\sow$ plus $2$.
\end{itemize}
\end{assumption}
\begin{definition}\label{def:collapes}
The \emph{collapsed surface} $\colwsur$ of $\surf_{\w}$ with respect to $\Sigma$ is defined by filling each boundary $c_{ij}$ in $\surf_{\w} \setminus \Sigma$ by a disc with one decorated point whose weight is $w_{ij}=\indd(c_{ij})-2$.
\end{definition}
By construction, the collapsed surface $\colwsur$ satisfies the conditions of \Cref{def:wdms}, which implies that it is also a weighted DMS. We can put the three surfaces into a \emph{symbolic short exact sequence}
\begin{equation}\label{eq:ses-surf}
\begin{tikzcd}
    \subsur \ar[r,hookrightarrow] & \sow \ar[r,rightsquigarrow] & \colwsur\,,
\end{tikzcd}
\end{equation}
where the first one is the inclusion and the latter one is the collapse. Moreover, there is a canonical grading of $\colwsur$ induced from that of $\sow$ by keeping the foliation outside $\Sigma$ unchanged, and filling the connecting part of $\Sigma$ and $\sow\backslash\Sigma$ with the horizontal foliation.
\begin{figure}[ht]\centering
\makebox[\textwidth][c]{
\begin{tikzpicture}[scale=.3]
\begin{scope}[shift={(-4,0)}]
\draw[thick, fill = green!10](0,0) circle (5);
\node[font=\tiny] at(5,-3) {$C_{11}$};
\draw[thick,fill = gray!10](0,0) circle (1.5);
\draw[red] (0,0) circle (3);
\node[font=\tiny] at(0,0) {$C_{12}$};
\draw (-3,0)\ww node[red,left,font=\scriptsize]{$1$} (3,0)\ww node[red,right,font=\scriptsize]{$2$};
\draw (0,1.5)\nn (0,-1.5)\nn (0,-5)\nn;
\end{scope}

\begin{scope}[shift={(14,0)},scale=4]
\draw[very thick,fill=cyan!10] (0,0) circle (2);
\draw[thick, fill = green!10](0,-.5) circle (1);
\draw[very thick,fill = gray!10](0,-.5) circle (0.5);
\node[font=\tiny] at(.5,-1.6) {$C_{11}$};
\node[font=\tiny] at(0,-.5) {$C_{12}$};
\draw[black] (0:2)\nn (90:2)\nn (150:2)\nn (210:2)\nn (270:2)\nn;
\draw[black] (0:0)\nn (270:1)\nn;
\draw[red] (0,-.5) circle (0.7);
\draw[red] (-1.4,0.6) -- (0,1.2) -- (1.4,0.6);
\draw[red] (0,1.2)to[out=-50,in=80] (.7,-.5);
\draw[red,font=\scriptsize] (-1.4,0.6)\ww node[left]{$2$} (1.4,0.6)\ww node[right]{$0$} (0,1.2)\ww node[above]{$1$}(-.7,-.5)\ww node[left]{$1$} (.7,-.5)\ww node[right]{$2$};
\end{scope}

\begin{scope}[shift={(32,0)},scale=3]
\draw[very thick,fill=cyan!10] (0,0) circle (2);
\draw[black] (0:2)\nn (90:2)\nn (150:2)\nn (210:2)\nn (270:2)\nn;
\draw[red] (-1.4,0.6) -- (0,1.2) -- (1.4,0.6);
\draw[red] (0,-0.5) -- (0,1.2);
\draw[red,font=\scriptsize] (-1.4,0.6)\ww node[left]{$2$} (1.4,0.6)\ww node[right]{$0$} (0,1.2)\ww node[above]{$1$}(0,-.5)\ww node[below]{$-1$};
\end{scope}

\draw(24,0)node{\huge{$\rightsquigarrow$}};
\draw[right hook-latex,>=stealth](2,0)to(4,0);

\draw (-1,-5) node {$\subsur$}  (18,-8)node {$\sow$} (37,-6) node{$\colwsur$};
\end{tikzpicture}}
\label{fig:collapse}\caption{A collapse}
\end{figure}
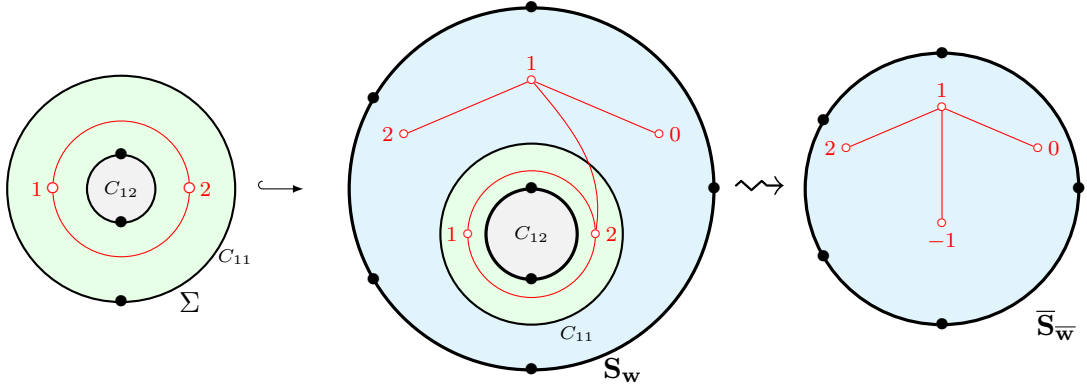
\paragraph{\textbf{Mixed-angulation of $\colwsur$}}
Let $\AS$ and $\uA$ be mixed-angulations of $\sow$ and $\colwsur$ respectively. If $\uA$ is a subset of $\AS$, then we say that $\uA$ is induced from $\AS$ and conversely, $\AS$ is a refinement of $\uA$. The \emph{principal part} $\EGb(\colwsur)$ of~$\EG(\colwsur)$ is defined as the full subgraph of $\EG(\colwsur)$ consisting of the mixed-angulations which are induced from $\EGp(\sow)$.
\begin{proposition}\label{prop:refine_of_flip}
Let $\uA$ be a mixed-angulation in $\EGb(\colwsur)$ and $\uA\xrightarrow{\overline{\gamma}}\uA'$ be a forward/backward flip in $\EGb(\colwsur)$. Then their refinements are related by finite sequences of forward/backward flips in $\EGp(\sow)$:
\begin{equation}\label{eq:liftofflip}
\AS\xrightarrow{\wg}\AS_{\wg}^{\sharp}\to\cdots\to\AS',
%\AS\xrightarrow{\wg_1}\A_{\wg_1}^{\sharp}\xrightarrow{\wg_2}\cdots\xrightarrow{\wg_m}\AS',
\end{equation}
where $\wg$ is the preimage of $\overline{\gamma}$ under the collapse $\sow\rightsquigarrow\colwsur$.
\end{proposition}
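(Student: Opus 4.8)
The plan is to analyze the flip $\uA \xrightarrow{\overline{\gamma}} \uA'$ locally, in the neighborhood of the arc $\overline{\gamma}$ inside $\colwsur$, and then pull everything back through the collapse $\sow \rightsquigarrow \colwsur$ to produce the claimed sequence in $\EGp(\sow)$. First I would fix the refinement $\AS$ of $\uA$ coming from $\EGp(\sow)$ and let $\wg\in\AS$ be the preimage of $\overline\gamma$; since $\overline\gamma$ avoids the filled-in discs, $\wg$ is literally the same arc viewed in $\sow$, and the two $\A$-gons of $\uA$ adjacent to $\overline\gamma$ pull back to unions of $\A$-gons of $\AS$ glued along the arcs of $\AS\setminus\uA$ that sit over the collapsed subsurface $\subsur$. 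The forward flip $\overline\gamma\mapsto\overline\gamma^\sharp$ moves the endpoints of $\overline\gamma$ counterclockwise one step along these two $\uA$-gons; the core observation is that, in the refinement, "moving one step along a $\uA$-gon" is realized by a bounded number of forward flips in $\AS$ — namely, one flips $\wg$ past each refining arc of $\AS$ that the endpoint of $\overline\gamma^\sharp$ has to cross. So I would set up the sequence in \eqref{eq:liftofflip} precisely as: flip $\wg$, then successively flip the resulting arc across each of the finitely many arcs in $\AS\setminus\uA$ lying between $\wg$ and the target position $\overline\gamma^\sharp$.

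The key steps, in order, are: (1) describe the preimage under the collapse of the two $\uA$-gons adjacent to $\overline\gamma$ as a polygonal region of $\AS$ cut into smaller $\A$-gons by the subsurface arcs, and observe that $\gamma^\sharp$ (the preimage of $\overline\gamma^\sharp$) is obtained from $\wg$ by dragging each endpoint across a finite ordered list of these cutting arcs; (2) check that each elementary "drag across one arc" is exactly a forward flip of the current arc in the ambient surface — here one uses the description of the forward flip in \Cref{def:wgflip} and the index-zero bookkeeping in the remark following \Cref{def:EG}, together with \Cref{lem:3intindec} to track gradings so that the index-zero condition is preserved at each stage; (3) verify that the composite of these flips produces exactly $\AS'$, i.e. $\AS'$ is the refinement of $\uA'$ determined by the same subsurface arcs — this is where one checks that the subsurface arcs are untouched by the whole sequence (they lie in the region the endpoint slides along, but the endpoint slides \emph{past} them, never across arcs outside the two relevant $\uA$-gons), so $\AS'$ still induces $\uA'$ and still lies in $\EGp(\sow)$ since it is reached from $\AS\in\EGp(\sow)$ by forward flips. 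The monogon case is handled the same way, using the second bullet of \Cref{def:wgflip} and the $B_{\we}^2$ description in \Cref{rem:f.f.}, with the endpoints dragged together. The backward-flip statement follows by reversing all arrows, since a backward flip is by definition the inverse of a forward flip.

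The main obstacle I anticipate is step (2): making precise, and grading-correct, the claim that a single "step along a collapsed polygon" decomposes into honestly-valid forward flips in $\sow$. One has to be careful that at each intermediate stage the arc produced is again an admissible open arc of a mixed-angulation (no disallowed self-intersections, endpoints landing on the correct marked points) and that a grading can be chosen making all the relevant adjacent-arc indices zero, so that the intermediate configurations really are vertices of $\EGp(\sow)$; this is exactly the content one gets from iterating the index computation \eqref{eq:3intindec} and the triangle relation \eqref{eq:sum1}, but bookkeeping the counterclockwise direction and the order in which the cutting arcs are crossed requires care. A secondary subtlety is the possibility that one of the two $\uA$-gons adjacent to $\overline\gamma$ is itself a monogon or that $\overline\gamma$'s endpoints coincide, forcing the "two endpoints move together" case; I would treat these by the same local pull-back analysis, noting that the filled discs of the collapse never obstruct the slide because $\overline\gamma^\sharp$ is disjoint from them by construction.
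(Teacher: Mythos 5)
There is a genuine gap, and it sits exactly at your step (2). You propose to realize the lift of $\uA\xrightarrow{\overline{\gamma}}\uA'$ by flipping only the evolving image of $\wg$, ``dragging'' one endpoint past the refining arcs of $\AS\setminus\uA$ one at a time, with the subsurface arcs left untouched. But a forward flip in $\sow$ (\Cref{def:wgflip}) is not such an elementary drag: it moves \emph{both} endpoints of the flipped arc simultaneously, and where they land is dictated by the current $\AS$-gons containing the arc, not by a chosen target; so you cannot advance one endpoint past a single refining arc while holding the other fixed, and once one endpoint has reached its correct corner of the $\uA'$-gon any further flip of the evolving arc overshoots past arcs of $\uA$ itself. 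More fundamentally, a sequence touching only the evolving arc cannot end at a refinement of $\uA'$: the pullback of $\overline{\gamma}^{\sharp}$ must enclose the whole collapsed component $\subsur_i$ (isotopy in $\sow$, not merely in $\colwsur$, is what ``$\uA'\subset\AS'$'' requires), while keeping the interior refining arcs in place and deleting $\wg$ would destroy the once-decorated-polygon/weight condition in the adjacent regions. This is why the paper's lifting sequences flip \emph{other} arcs as well --- $\wg,\wg_1,\dots,\wg_m$ in the bigon case (\Cref{fig:3.4.3}), $\wg,\wg_1$ (and $\wg_2$) in the marked-monogon case --- so your assertion that ``the subsurface arcs are untouched by the whole sequence'' is not a patchable imprecision but contradicts what must happen.

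Two further ingredients of the actual proof are absent. First, the paper does not lift the flip relative to an arbitrary refinement: it \emph{chooses} the refinement adapted to the flip (types I--IV), using the identity of \Cref{prop:combform} either to produce a decoration of weight at least $1$, so that the two angles adjacent to $\wg$ can be kept uncut and a single flip already lifts the quotient flip, or to conclude that all decorations have weight $0$ (the bigon case with its longer sequence). Second, your plan does not engage with the hardest situation, where the $\uA$-polygon containing $\overline{\gamma}$ is a monogon whose collapsed component has a single marked point on its outer boundary (\Cref{fig:3.4.5}): there the lift is an adaptive sequence of flips governed by a local clockwise ordering of half-edges at the endpoint, a nontrivial claim that certain intermediate arcs are monogon arcs (proved via a $4g$-gon model of $\subsur_i$, so the monogon rule of \Cref{def:wgflip} applies), and a termination argument counting half-edges; ``handling the monogon case the same way'' does not cover any of this. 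As written, the proposal identifies the correct goal but its proposed mechanism already fails in the bigon case.
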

\begin{proof}
We only need to show the ``forward'' case because the proof also works if we replace ``forward flip'' by ``backward flip''. The point of the proof is to choose a suitable refinement of $\uA$. We consider the $\uA$-polygons with weighted decorations which arise from collapsing (the connected components of) subsurface and recover the mixed-angulation of $\subsur$, thereby giving a refinement of $\uA$. We particularly consider the $\uA$-polygon in $\colwsur$ containing $\overline{\gamma}$. We notice that there are two angles $a$ and $b$ formed by $\overline{\gamma}$ and its clockwise adjacent edges in the $\uA$-polygon, which are preserved under forward flip, cf. \Cref{fig:3.4.1}.
\begin{figure}
\begin{tikzpicture}[scale=1.1]
\draw[red,thick,dashed] (-1,-.9) .. controls (-2.5,-.8) and (-2.5,0.7) .. (-1.3,1);
\draw (-1.3,0)node{$P$};
  % 上方轻微向上弯曲的线
  \draw[thick]
    (-1.3,1) .. controls (-0.3,1.2) and (0.3,1.2) .. (1.3,1);

  % 下方轻微向上弯曲的线
  \draw[thick]
    (-1,-.9) .. controls (-0.3,-0.8) and (0.3,-0.8) .. (1,-.9);

  % 竖直方向轻微向左弯的线
  \draw[thick,cyan]
    (0,1.15) .. controls (-0.2,0.3) and (-0.2,-0.3) .. (0,-.8);

  % 小圆弧 180~270 度
  \draw[thick,blue!50]
    (-.2,1.15) arc[start angle=160, end angle=260, radius=0.2];

  % 小圆弧 0~90 度
  \draw[thick,blue!50]
    (.2,-.85) arc[start angle=-10, end angle=100, radius=0.2];
  \draw[blue!50, thick, dashed] (0,-0.85) -- (-1.3,1);
     \draw (0.1,0.1) node{$\overline{\gamma}$} ;
\draw[thick](0,1.15)\nn (0,-.85)\nn (-1.3,1)\nn;
  \node[below left,blue!50] at (-.1,1.1) {$a$};
  % 标注 b 在 (0,-0.8) 的右上方
  \node[above right,blue!50] at (0,-0.75) {$b$};
  \begin{scope}[shift={(4,0)}]
\draw[thick]
    (-1,-.9) .. controls (-0.3,-0.8) and (0.3,-0.8) .. (1,-.9);
  \draw[thick,cyan]
    (0,-0.85) 
      .. controls (-2,1.7) and (2,1.7) .. (0,-0.85);
% 小圆弧 0~90 度
  \draw[thick,blue!50]
    (.2,-.85) arc[start angle=-10, end angle=130, radius=0.2];
     \draw (0,1.3) node{$\overline{\gamma}$} (0,.7) node{$P$} ;
\draw[thick] (0,-.85)\nn node[below]{$A$};
\draw[thick] (0,-.65) node[above,blue!50]{$a$};
  % 标注 b 在 (0,-0.8) 的右上方
  \node[above right,blue!50] at (0.15,-0.9) {$b$};
 \draw (0,0)\ww node[red,above,font=\scriptsize]{$-1$}; 
  \end{scope}
\end{tikzpicture}
    \caption{The angles which are preserved when flipping}
    \label{fig:3.4.1}
\end{figure}
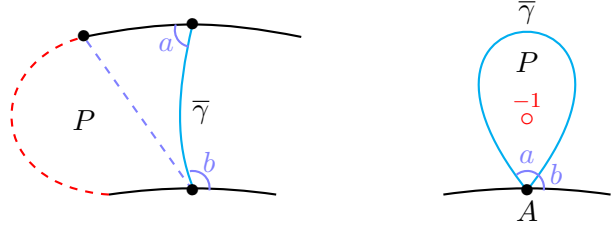
For the polygon arising from collapsing (the connected component of) subsurface, we may, without loss of generality, denote the polygon as $N$-gon $P$ and the connected component as $\subsur_i$. We distinguish the following three cases depending on $N$.

\textbf{Case I:} $N\geq 3$.
Then $\overline{\gamma}$ is not a monogon arc. We claim that there exists a decoration whose weight is at least $1$. If not, by \Cref{prop:combform}, we have that the left hand side of \eqref{eq:combform} is less than $-5$ while the right one is greater than $-4$, which is a contradiction. With this, if $P$ contains the angle $a$ (and $b$ is similar), we connect the two endpoints of its sides that are not the vertex of $a$ by an arc inside $P$, see the dashed arc in the left one of \Cref{fig:3.4.1}. Then we first triangulate $P$ and then remove or add certain arcs so that the remaining arcs cut $P$ into polygons, each of which has a number of sides exactly equal to $w_i+2$ for some weight $w_i$ of the decorations in $\subsur$. Therefore, we get a refinement $\A$ of $\uA$ by adding a mixed-angulation $\subsur$ so that the angle $a$ and $b$ are not effected by new added arcs. We call this kind of refinement of $\uA$ to by \textbf{type I}. Thus we have that $\A\xrightarrow{\wg}\A_{\wg}^{\sharp}$ is a lift of $\uA\xrightarrow{\overline{\gamma}}\uA^\sharp_{\overline{\gamma}}$ in $\sow$.

\textbf{Case II:} $N=2$. If there exists a decoration whose weight is at least $1$, the situation is similar to the above. The only difference lies in the first step, where the arc becomes the dashed arc shown in \Cref{fig:3.4.2}.
\begin{figure}
\begin{tikzpicture}

  % 上方轻微向上弯曲的线
  \draw[thick]
    (0,1.15) .. controls (0.6,1.1) .. (1.3,1);

  % 下方轻微向上弯曲的线
  \draw[thick]
    (0,-.85) .. controls (0.5,-0.85) .. (1,-.9);
  % 竖直方向轻微向左弯的线
  \draw[thick,cyan]
    (0,1.15) .. controls (-0.2,0.3) and (-0.2,-0.3) .. (0,-.8);
\draw[thick]
    (0,1.15) .. controls (-2.2,1) and (-2.2,-1) .. (0,-.85);
  % 小圆弧 180~270 度
  \draw[thick,blue!50]
    (-.2,1.15) arc[start angle=170, end angle=260, radius=0.2];

  % 小圆弧 0~90 度
  \draw[thick,blue!50]
    (.2,-.85) arc[start angle=-10, end angle=100, radius=0.2];
\draw[thick,dashed]
    (0,-0.85) 
      .. controls (-1.5,-.8) and (-1,1.7) .. (0,-0.85);
\draw(-.6,-.15)node{$\star$};
     \draw (0.1,0.1) node{$\overline{\gamma}$} ;
\draw[thick](0,1.15)\nn (0,-.85)\nn ;
  \node[below left,blue!50] at (-.1,1.1) {$a$};
  % 标注 b 在 (0,-0.8) 的右上方
  \node[above right,blue!50] at (0,-0.75) {$b$};
\end{tikzpicture}
      \caption{The triangulation when $N=2$}
    \label{fig:3.4.2}
\end{figure}
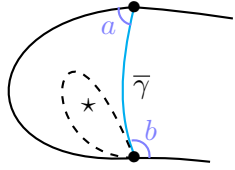
If there is no weighted decoration in $\subsur_i$ with weight bigger than 1, by \Cref{prop:combform}, we have that the left hand side of \eqref{eq:combform} is less than $-4$ while the right one is greater than $-4$, which implies that both sides of the equation must be $-4$. Hence, all decorations on $\subsur_i$ have weight $0$ and we get the following kind of refinement $\A$ of $\uA$ locally dipicted in \Cref{fig:3.4.3}, which is called as \textbf{type II}. 
\begin{figure}
\begin{tikzpicture}[xscale=1.4]
  % 向左弯
  \draw[thick,cyan] (0,1.2) .. controls (-1.1,0.7) and (-1.1,-0.7) .. (0,-1.2);
\draw[thick,red] (-1.5,0)--(1.5,0);
  % 向右弯
  \draw[thick] (0,1.2) .. controls (1.1,0.7) and (1.1,-0.7) .. (0,-1.2);
\draw[thick](0,1.2)\nn (0,-1.2)\nn node[below]{$A$};
\draw (-.5,0.5) node{$\overline{\gamma}$} (.5,0.5) node{$\overline{\xi}$};
\draw[red] (-1.2,-0.3) node{$\overline{\eta}$} (1.2,-0.3) node{$\overline{\delta}$};
 \draw (0,0)\ww node[red,above,font=\scriptsize]{$0$};
 \draw[red](-1.5,0)\ww (1.5,0)\ww;
\begin{scope}[shift={(4,0)}]
\draw[thick,cyan] (0,1.2) .. controls (-1.1,0.7) and (-1.1,-0.7) .. (0,-1.2);

  % 向右弯
  \draw[thick] (0,1.2) .. controls (.4,0.7) and (.4,-0.7) .. (0,-1.2);
\draw[thick] (0,1.2) .. controls (-.4,0.7) and (-.4,-0.7) .. (0,-1.2);
  % 向右弯
  \draw[thick] (0,1.2) .. controls (1.1,0.7) and (1.1,-0.7) .. (0,-1.2);
\draw[thick](0,1.2)\nn (0,-1.2)\nn node[below]{$A$};
\draw (-1,0) node{$\widetilde{\gamma}$} (1,0) node{$\widetilde{\xi}$};
\draw (-.5,-.3) node[font=\scriptsize]{$\widetilde{\gamma}_1$} (.5,-.3) node[font=\scriptsize]{$\widetilde{\gamma}_m$};
 \draw (-.2,0)\ww node[red,above,font=\scriptsize]{$0$} (.2,0)\ww node[red,above,font=\scriptsize]{$0$} (0,0) node[red,font=\scriptsize]{$\cdots$};
 \draw (-.6,0)\ww node[red,above,font=\scriptsize]{$0$} (0.6,0)\ww node[red,above,font=\scriptsize]{$0$};
\end{scope}
\end{tikzpicture}
    \caption{Refinement of type II}
    \label{fig:3.4.3}
\end{figure}
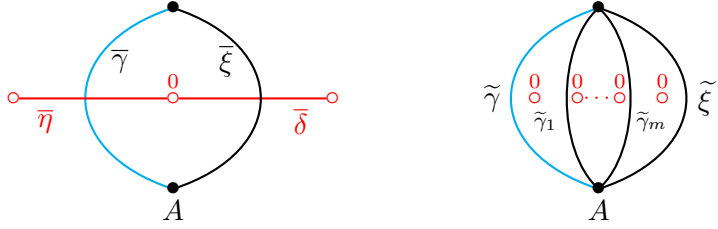
Starting from vertex $A$ and moving clockwise from $\wg$, we label the newly added arcs as $\wg_1,\ldots,\wg_m$. Then we flip $\wg, \wg_1,\ldots,\wg_m$ one by one and get the series in \eqref{eq:liftofflip}.

\textbf{Case III:} $N=1$. As shown in the right one in \Cref{fig:3.4.1}, the outer boundary of $\subsur_i$ contains exactly one marked point, $A$. Below we give the refinement $\uA$, i.e. the mixed-angulation of $\subsur_i$. Since the refinement is performed inside a monogon, it does not cut the angle $b$. We deal with the following two subcases whether there is marked point inside the area bounded by $\wg$.
\begin{enumerate}
\item If there exist a marked point $M$ inside the area bounded by $\wg$, then we draw a monogon $\xi$ with endpoint $M$ such that the region bounded between $\xi$ and $\wg$ is homeomorphic to an annulus, cf. \Cref{fig:3.4.4}. We give a refinement of $\uA$ by using an approach similar to that in the type~I case. We construct two graded open arcs $\gamma_1$ and $\gamma_2$ with endpoints $A$ and $M$ so that $\gamma_1$ and $\gamma_2$ provide a triangulation of the above annular region.
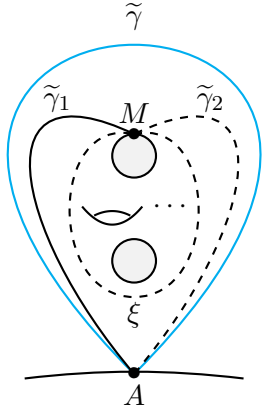
\begin{figure}
\begin{tikzpicture}[xscale=1.2,scale=1.2]
\draw[black,thick,fill=gray!10] (0,1.56) ellipse (0.2 and 0.24);
\draw[black,thick,fill=gray!10] (0,0.4) ellipse (0.2 and 0.24);
\draw[thick]
    (-1,-.9) .. controls (-0.3,-0.8) and (0.3,-0.8) .. (1,-.9);
  \draw[thick,cyan]
    (0,-0.85) 
      .. controls (-4,4) and (4,4) .. (0,-0.85);
     \draw (0,3.1) node{$\wg$} ;
\draw[thick](0,1.8)\nn node[above]{$M$};
\draw (.35,1) node{$\cdots$};
\draw (0,-.2) node{$\xi$};
  % 标注 b 在 (0,-0.8) 的右上方
\draw[thick] (0,-.85)\nn node[below]{$A$};

\begin{scope}[shift={(-0.2,1)},xscale=0.55]
\fill[white]
    plot[domain=-0.5:0.5, smooth] (\x, { -0.2*(1-(2*\x)^2) })  % 下弧
    -- plot[domain=0.3:-0.3, smooth] (\x, { -0.1 + 0.05*(1-(\x/0.3)^2) }) % 上弧
    -- cycle;
  % 画出两条边界弧
\draw[thick]
    plot[domain=-0.5:0.5, smooth] (\x, { -0.2*(1-(2*\x)^2) });  % 下弧
\draw[thick]
    plot[domain=-0.3:0.3, smooth] (\x, { -0.1 + 0.05*(1-(\x/0.3)^2) }); % 上弧
\end{scope}
\draw[thick,dashed] (0,-.85) .. controls +(55:2) and +(30:1.6) .. (0,1.8);
\draw[thick] (0,-.85) .. controls +(125:2) and +(150:1.6) .. (0,1.8);
\draw[thick,dashed] (0,1.8) .. controls +(15:.8) and +(0:.8) .. (0,0) .. controls +(180:.8) and +(165:.8) .. (0,1.8);
\draw (-0.7,2.2) node{$\wg_1$}; 
\draw (0.7,2.2) node{$\wg_2$}; 

\end{tikzpicture}
    \caption{Refinement of type III}
    \label{fig:3.4.4}
\end{figure}
Next, inside $\xi$ we first draw the triangulations and then, according to the weighted decoration of $\Sigma_i$, we add or remove some arcs so as to obtain the mixed-angulation of $\Sigma_i$. Note that, in order to match the weight of $\Sigma_i$, one of $\gamma_1$ or $\gamma_2$ may be removed, but never both, since otherwise the definition of mixed-angulation would not be satisfied. Without loss of generality, we may assume that $\gamma_2$ is probably removed. Finally, by assigning appropriate gradings to the newly added open arcs, we ensure that these graded open arcs satisfy the ``index-zero'' condition required for forming a mixed-angulation. Therefore, we obtain the desired mixed-angulation of $\Sigma_i$ and call this refinement \textbf{type III}. Then we flip $\wg, \wg_1$ (and $\wg_2$) one by one and get the series in \eqref{eq:liftofflip}.
\item If $\subsur_i$ has only one marked point on its outer boundary, then the refinement of $\uA$ can be described locally as the situation where $\wg$ encloses a collection of monogons. We refer to this refinement as \textbf{type IV}. These monogons may have the case where a monogon is nested inside another monogon or involve nontrivial nesting, analogous to linked loops, cf. \Cref{fig:3.4.5}.
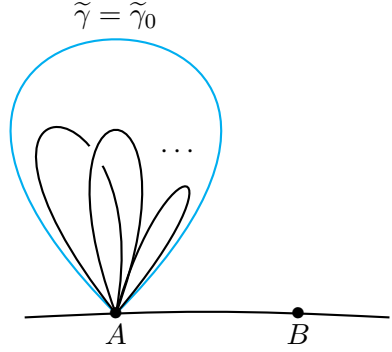
\begin{figure}
\begin{tikzpicture}[xscale=1.2]
\draw[thick]
    (-1,-.9) .. controls (1-0.3,-0.8) and (1.3,-0.8) .. (3,-.9);
  \draw[thick,cyan]
    (0,-0.85) 
      .. controls (-4,4) and (4,4) .. (0,-0.85);
%\draw[thick]  (0,-0.85)  .. controls (-1.1,1) and (-.3,1) .. (0,-0.85);
\draw[thick]  (0,-0.85)  .. controls (.55,1.5) and (1.5,1.3) .. (0,-0.85);
\draw[thick]  (0,-0.85)  .. controls (-2.2,2.5) and (.5,2.4) .. (0,-0.85);
 \filldraw[fill=white,draw=none] (-.2,1.25) circle (0.15);
\draw[thick]  (0,-0.85)  .. controls (-1,2.4) and (1,2.4) .. (0,-0.85);
%\draw[thick]  (0,-0.85)  .. controls (.5,2.2) and (2,2.2) .. (0,-0.85);
     \draw (0,3.1) node{$\wg=\wg_0$} ;
%\draw[thick] (0.7,1.4) node[above,font=\scriptsize]{$\wg_1=\wg_3$} (.7,0.5) node[above,font=\scriptsize]{$\wg_2$} (-0.5,1.5) node[above,font=\scriptsize]{$\wg_4=\wg_7$} (-.1,0.6) node[above,font=\scriptsize]{$\wg_5$}(-0.55,0.5) node[above,font=\scriptsize]{$\wg_6$};
\draw (.7,1.3) node{$\cdots$};
  % 标注 b 在 (0,-0.8) 的右上方
\draw[thick] (0,-.85)\nn node[below]{$A$};
\draw[thick] (2,-.85)\nn node[below]{$B$};
\end{tikzpicture}
    \caption{Refinement of type IV, an example}
    \label{fig:3.4.5}
\end{figure}
Denote by $\gs$ the new arc obtained from $\wg$ after performing a flip, with endpoints $A$ and $B$. The desired outcome of this sequence of flips is that we arrive at a configuration where there is a large monogon with endpoint $B$, inside of which several monogons are contained (possibly in a complicated nesting pattern).

Before giving the explicit sequence of flips, we first introduce the following notation concerning the local ordering of half-edges at a common vertex. Consider all half-edges of open arcs incident at a fixed vertex, arranged in clockwise order. For half-edges $e_1$ and $e_2$ incident at the same vertex, we say that $e_1<e_2$ if the clockwise angle from $e_1$ to $e_2$ does not intersect the arc $AB$. We write $e_1 \sl e_2$ (resp. $e_1 \sg e_2$) if $e_1<e_2$ ($e_1>e_2$) and there is no half-edge between them in the local ordering. At the vertex $A$, let $e_A$ (resp. $e_B$) denote the half-edge corresponding to the arc $AB$; then by convention we require $e<e_A$ ($e_B<e$) for all other half-edges $e$ incident at $A$ (resp. $B$). For each newly added open arc $\widetilde{\xi}$ with endpoints $A$ and $B$, we denote by $\widetilde{\xi}_L$ (resp. $\widetilde{\xi}_R$) the half-edge of $\widetilde{\xi}$ incident at $A$ (resp. $B$). If both endpoints of $\widetilde{\xi}$ coincide, then the two corresponding half-edges are still denoted by $\widetilde{\xi}_L$ and $\widetilde{\xi}_R$ where they satisfy $\widetilde{\xi}_L \sl \widetilde{\xi}_R$.

We first claim that if $\widetilde{\xi}_L \sl \widetilde{\xi}_R$, then $\widetilde{\xi}$ is a monogon arc. Hence, in this case, the operation of flipping $\widetilde{\xi}$ should be done according to the second one in \Cref{def:wgflip}. We show the claim as follows. Each $\Sigma_i$ of genus $g$ can be regarded as the surface obtained by gluing together the edges labeled $a_i$ in the $4g$-gon shown in \Cref{fig:3.4.6}.
\begin{figure}
\begin{tikzpicture}[scale=1.2]
% 参数设置
\def\n{16}   % 边数
  \def\r{2}    % 半径

  % 画正16边形
  \draw[thick] (0:\r) \foreach \x in {1,...,\n} { -- (360/\n*\x:\r) } -- cycle;
\draw[thick, fill=gray!10] 
    (0,-2) 
    .. controls (-.8,0) and (.8,0) .. (0,-2);
\draw[thick,cyan ] 
    (0,-2) 
    .. controls (-1.5,1) and (1.5,1) .. (0,-2); 
  % 每条边的中点画箭头
  \foreach \x in {0,...,\numexpr\n-1} {
    % 端点
    \pgfmathsetmacro{\xa}{360/\n*\x}
    \pgfmathsetmacro{\xb}{360/\n*(\x+1)}

    % 中点
    \path (\xa:\r) -- (\xb:\r) coordinate[midway](M);

    % 边方向向量
    \pgfmathsetmacro{\dx}{cos(\xb)-cos(\xa)}
    \pgfmathsetmacro{\dy}{sin(\xb)-sin(\xa)}
    \pgfmathsetmacro{\ang}{atan2(\dy,\dx)} % 边的角度
    % 模式：两个顺时针，两个逆时针
    \pgfmathtruncatemacro{\dir}{mod(\x,4)}
    \ifnum\dir<2
      % 顺时针（方向 = 边的角度）
      \draw[->] (\xa:\r) -- (M);
      %\draw (\xa,\r)\nn;
      %\draw[->] (M) -- ++(\ang:0.3);
    \else
      % 逆时针（方向 = 反向）
      \draw[->] (\xb:\r) -- (M);
      %\draw (\xa,\r)\nn;
      %\draw[->] (M) -- ++(\ang+180:0.3);
    \fi
    \foreach \x in {0,...,\numexpr\n-1} {
    \draw (360/\n*\x:\r) \nn;
}
  }
\fill[white, draw=none] (1,1.5) circle (1.2);
\draw (1,1.5)node{$\cdots$};
\draw (-1,-.8)node{$\widetilde{\xi}$} (-.3,-1.7)node[font=\scriptsize]{$\widetilde{\xi}_L$} (-1.6,.3)node[font=\scriptsize]{$\widetilde{\xi}_R$};
\draw (0,-2.3)node{$A$};
\draw[red, thick] 
    (0,-2) .. controls (-1,0) .. (157.5:2);
\draw(-101.25:2.2)node{$a_1$} (236.25:2.2)node{$a_2$} (213.75:2.2)node{$a_1$} (191.25:2.2)node{$a_2$} (168.75:2.2)node{$a_3$} (146.25:2.2)node{$a_4$} (123.75:2.2)node{$a_3$} (101.25:2.2)node{$a_4$}  (-11.25:2.5)node{$a_{2g-1}$} (-33.75:2.3)node{$a_{2g}$} (-56.25:2.4)node{$a_{2g-1}$} (-78.75:2.2)node{$a_{2g}$};
\end{tikzpicture}
\caption{An arc on the $\subsur$ can be represented as an arc on the corresponding $4g$-gon}
    \label{fig:3.4.6}
\end{figure}
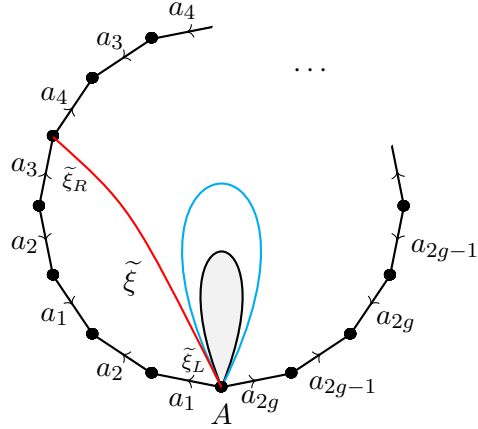
Without loss of generality, we place the boundary at the vertex $A$ and represent $\widetilde{\xi}$ as an arc on the $4g$-gon. Then, traversing once counterclockwise around $A$, if there are no other open arcs lying strictly between $\widetilde{\xi}_L$ and $\widetilde{\xi}_R$, from \Cref{fig:3.4.6}, we see that a mixed-angulation cannot be formed in this situation.

We now describe the sequence of flips iteratively, where we finally get what we expect. First, set $\wg_0=\wg$ and consider $\wg_i$. If $\wg_i$ is not a monogon arc, then we take the graded open arc $\wg_{i+1}$ such that $\wg_{i,L}\sl \wg_{i+1,L}<\wg_{i,R}$. We next flip $\wg_i$, which becomes $\gs_i$. Thus, we have  
\[
\wg_{i+1,R}\sl\gs_{i,L},\quad \text{for any } i.
\]  
Replacing $i$ by $i-1$, we then deduce  
\[
\gs_{i-1,R}\sl\gs_{i,R},\quad \text{for any } i,
\]  
where we set $\gs_{-1,L}=e_A$ and $\gs_{-1,R}=e_B$.
%\[\wg_{i,R}\sl \gs_{i-1,L}.\]  
 If $\wg_i$ is a monogon arc, note that in this case we have $i>0$. Then we take $\wg_{i+1}=\gs_{i-1}$, and we naturally have $\wg_{i,L}\sl \wg_{i,R}\sl \wg_{i+1,L}.$  
We flip $\wg_i$, which becomes $\gs_i$, so that  
\[
\gs_{i-1,R}=\wg_{i+1,R}\sl \gs_{i,L}\sl \gs_{i,R}.
\]  
Next we flip $\wg_{i+1}$ to obtain $\gs_{i+1}$ and proceed in this way.  
\begin{figure}\centering
\makebox[\textwidth][c]{
\begin{tikzpicture}[xscale=2]
\begin{scope}[shift={(-5,0)}]
\draw[thick]
    (-.5,-.9) .. controls (1-0.3,-0.8) and (1.3,-0.8) .. (2.5,-.9);
\draw[thick]  (0,-0.85)  .. controls (-1,2.4) and (1,2.4) .. (0,-0.85);
 \filldraw[fill=white,draw=none] (.3,.6) circle (0.15);
 \filldraw[fill=white,draw=none] (-.3,.9) circle (0.15);
\draw[thick]  (0,-0.85)  .. controls (0,1.8) and (1.5,1.2) .. (2,-.85);
\draw[thick]  (0,-0.85)  .. controls (-2.2,1.5) and (0,2.7) .. (0,-0.85);
%\draw[thick]  (0,-0.85)  .. controls (.5,2.2) and (2,2.2) .. (0,-0.85);
     \draw (0,2) node{$\wg_{i+1}$} (-.8,1.8) node{$\wg_{i}$} ;
\draw (.9,1.3) node{$\gs_{i-1}$};
  % 标注 b 在 (0,-0.8) 的右上方
\draw[thick] (0,-.85)\nn node[below]{$A$};
\draw[thick] (2,-.85)\nn node[below]{$B$};
\draw(2.2,1)edge[thick,->,>=stealth](2.7,1);
\draw(2.45,1.3)node{$\wg_i$};
\begin{scope}[shift={(3.5,0)}]
\draw[thick]
    (-.5,-.9) .. controls (1-0.3,-0.8) and (1.3,-0.8) .. (2.5,-.9);
\draw[thick]  (0,-0.85)  .. controls (-1,2.4) and (1,2.4) .. (0,-0.85);
 \filldraw[fill=white,draw=none] (.3,.6) circle (0.15);
\draw[thick]  (0,-0.85)  .. controls (0,1.8) and (1.5,1.2) .. (2,-.85);
 \filldraw[fill=white,draw=none] (1.1,.8) circle (0.1);
\draw[thick,red]  (0,-0.85)  .. controls (1,1.8) and (2.5,1.2) .. (2,-.85);
%\draw[thick]  (0,-0.85)  .. controls (-2.2,1.5) and (0,2.7) .. (0,-0.85);
%\draw[thick]  (0,-0.85)  .. controls (.5,2.2) and (2,2.2) .. (0,-0.85);
     \draw (0,2) node{$\wg_{i+1}$};
     \draw[red] (1.7,1.3) node{$\gs_{i}$};
\draw (.9,1.3) node{$\gs_{i-1}$};
  % 标注 b 在 (0,-0.8) 的右上方
\draw[thick] (0,-.85)\nn node[below]{$A$};
\draw[thick] (2,-.85)\nn node[below]{$B$};
\end{scope}
\end{scope}
\end{tikzpicture}}
    \caption{A flip at $\wg_i$ in type IV, when $\wg_i$ is not a monogon arc}
    \label{fig:3.4.7}
\end{figure}
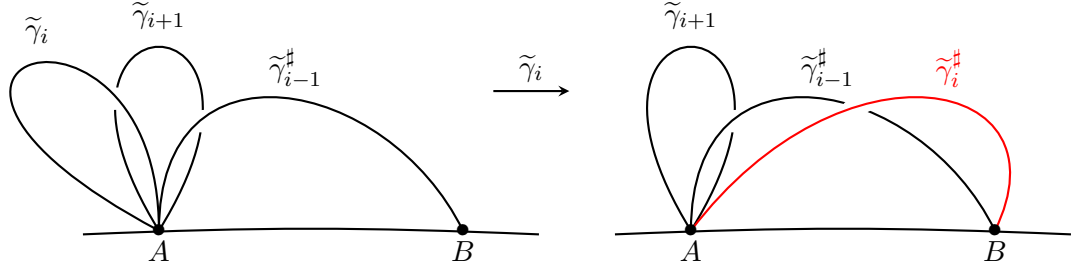
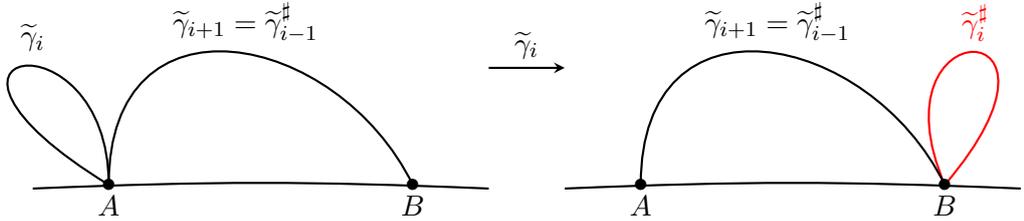
\begin{figure}\centering
\makebox[\textwidth][c]{
\begin{tikzpicture}[xscale=2]
\begin{scope}[shift={(-5,0)}]
\draw[thick]
    (-.5,-.9) .. controls (1-0.3,-0.8) and (1.3,-0.8) .. (2.5,-.9);
\draw[thick]  (0,-0.85)  .. controls (0,1.8) and (1.5,1.2) .. (2,-.85);
\draw[thick]  (0,-0.85)  .. controls (-1.5,1) and (0,1.5) .. (0,-0.85);
%\draw[thick]  (0,-0.85)  .. controls (.5,2.2) and (2,2.2) .. (0,-0.85);
     \draw (-.5,1.1) node{$\wg_{i}$} ;
\draw (.9,1.3) node{$\wg_{i+1}=\gs_{i-1}$};
  % 标注 b 在 (0,-0.8) 的右上方
\draw[thick] (0,-.85)\nn node[below]{$A$};
\draw[thick] (2,-.85)\nn node[below]{$B$};
\draw(2.5,.7)edge[thick,->,>=stealth](3,.7);
\draw(2.75,1)node{$\wg_i$};
\begin{scope}[shift={(3.5,0)}]
\draw[thick]
    (-.5,-.9) .. controls (1-0.3,-0.8) and (1.3,-0.8) .. (2.5,-.9);
\draw[thick,red]  (2,-0.85)  .. controls (1.5,1.5) and (3,1.5) .. (2,-0.85);
\draw[thick]  (0,-0.85)  .. controls (0,1.8) and (1.5,1.2) .. (2,-.85);
     \draw[red] (2.2,1.3) node{$\gs_{i}$};
\draw (.9,1.3) node{$\wg_{i+1}=\gs_{i-1}$};
  % 标注 b 在 (0,-0.8) 的右上方
\draw[thick] (0,-.85)\nn node[below]{$A$};
\draw[thick] (2,-.85)\nn node[below]{$B$};
\end{scope}
\end{scope}
\end{tikzpicture}}
    \caption{A flip at $\wg_i$ in type IV, when $\wg_i$ is a monogon arc}
    \label{fig:3.4.8}
\end{figure}
This procedure stops when $\wg_{i+1,L}=e_A$. To see that the process involves only finitely many steps, since if $\wg_i$ has coinciding endpoints, then flipping $\wg_i$ reduces the number of half-edges at $A$ by one or two. If $\wg_i$ has distinct endpoints, then flipping $\wg_{i+1}$ also reduces the number of half-edges at $A$ by one or two. Therefore, the number of half-edges at $A$ decreases and eventually becomes zero. Thus the process stops in finitely many steps. At the final step $N$, we have 
\[e_B \sl \gs_{N,L}
\quad \text{and} \quad
e < \gs_{N,R}\] for every half-edge $e$ at $B$. This implies that $\gs_N$ encloses a collection of monogons. We draw an example of the sequence of forward flips of type IV in \Cref{fig:3} in \Cref{sec:B}. Therefore, the sequence in \eqref{eq:liftofflip} is obtained.
\end{enumerate}
\end{proof}
As a consequence, the graph $\EGb(\colwsur)$ is $(m,m)$-regular, where $m$ is the number of arcs in any mixed-angulation of $\colwsur$. Thus we obtain the following corollary.
\begin{corollary}
the principal part $\EGb(\colwsur)$ is a union of connected components of $\EG(\colwsur)$.
\end{corollary}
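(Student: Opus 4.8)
The plan is to derive the corollary from \Cref{prop:refine_of_flip} by a degree-counting argument, so the first thing I would do is record two structural facts. By \Cref{prop:combform} every mixed-angulation of $\colwsur$ has the same number $m$ of internal arcs, and by \Cref{def:wgflip} a mixed-angulation with $m$ arcs admits exactly one forward flip and exactly one backward flip at each of its arcs. Hence every vertex of $\EG(\colwsur)$ has out-degree $m$ and in-degree $m$; that is, $\EG(\colwsur)$ is $(m,m)$-regular.

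Next I would upgrade \Cref{prop:refine_of_flip} into the statement that $\EGb(\colwsur)$ is closed under passing to neighbours inside $\EG(\colwsur)$. Fix $\uA\in\EGb(\colwsur)$, so that $\uA$ is induced from some $\AS_0\in\EGp(\sow)$, and let $\uA\xrightarrow{\overline\gamma}\uA'$ be any one of the $m$ forward or $m$ backward flips available at $\uA$ in $\EG(\colwsur)$, where a priori $\uA'$ need not lie in $\EGb(\colwsur)$. The proof of \Cref{prop:refine_of_flip} produces a refinement $\AS$ of $\uA$ lying in $\EGp(\sow)$ together with a finite flip sequence $\AS\xrightarrow{\wg}\AS^{\sharp}_{\wg}\to\cdots\to\AS'$ inside $\EGp(\sow)$ whose final term $\AS'$ is a refinement of $\uA'$. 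In particular $\uA'$ is induced from an element of $\EGp(\sow)$, so $\uA'\in\EGb(\colwsur)$. Since this applies to all $m$ forward and $m$ backward flips at $\uA$, the full subgraph $\EGb(\colwsur)$ already contains all out-edges and all in-edges present at each of its vertices in $\EG(\colwsur)$; that is, $\EGb(\colwsur)$ is itself $(m,m)$-regular.

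Finally I would invoke the elementary fact that a full subgraph $\bG'\subseteq\bG$ of a directed graph with $\deg^{\pm}_{\bG'}(v)=\deg^{\pm}_{\bG}(v)$ for every vertex $v$ of $\bG'$ cannot be joined to its complement by an edge of $\bG$; propagating this along undirected paths shows that the vertex set of $\bG'$ is a union of connected components of $\bG$. Applying this with $\bG'=\EGb(\colwsur)$ and $\bG=\EG(\colwsur)$ gives the corollary.

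I do not expect a serious obstacle here: once \Cref{prop:refine_of_flip} is available, the corollary is a formal consequence of it. The one point that deserves care — and it is already absorbed into \Cref{prop:refine_of_flip} — is that the specific refinement $\AS$ of $\uA$ of types I--IV constructed there lies in the same component $\EGp(\sow)$ as the a priori refinement $\AS_0$. This holds because $\AS$ and $\AS_0$ agree outside the collapsed pieces $\subsur_i$ and differ only by a choice of mixed-angulation of each $\subsur_i$, and any two mixed-angulations of $\subsur_i$ (with its prescribed weights) are related by flips supported inside $\subsur_i$; hence $\AS_0$ and $\AS$ are flip-equivalent in $\sow$ and therefore lie in the same connected component. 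Granting this, the counting argument above completes the proof.
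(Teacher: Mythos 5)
Your argument is correct and is essentially the paper's own proof: the paper obtains the corollary from \Cref{prop:refine_of_flip} by the same degree-counting observation, namely that the proposition shows every forward/backward flip at a vertex of $\EGb(\colwsur)$ stays in $\EGb(\colwsur)$, so $\EGb(\colwsur)$ is $(m,m)$-regular inside the $(m,m)$-regular graph $\EG(\colwsur)$ and is therefore a union of connected components. One small caution: your closing justification---that any two mixed-angulations of a component $\subsur_i$ are related by flips supported inside $\subsur_i$---is not something the paper establishes (exchange graphs of graded mixed-angulations need not be connected, which is exactly why $\EGp(\sow)$ is a fixed component), but since your main argument, like the paper's, defers this point to \Cref{prop:refine_of_flip} as stated, it does not affect the proof.
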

%=========================================================
\subsection{Categories associated with given surfaces}
%=========================================================
Using the correspondence between quadratic differentials and stability conditions in \cite{HKK}, a marked surface may correspond to several categories, but their spaces of stability conditions are isomorphic since they all share the same moduli space of quadratic differentials. We now make the following assumptions on the categories associated to surfaces, and then establish the correspondence between the space of stability conditions and the moduli space of quadratic differentials.

Let $\sow$ be a weighted DMS and $\D(\sow)$ be the corresponding triangulated category satisfying the following assumptions.
\begin{assumption}\label{prop:CHQ}
We make the following assumptions on $\D(\sow)$.
\begin{enumerate}
\item There is an injection from the set of graded closed arc $\wCA(\sow)$ to the set of objects in $\D(\sow)$. We denote the object $X_{\we}$ corresponding to the graded closed arc $\we$.
\item Given two closed arcs $\widetilde{e}$ and $\widetilde{h}$ in $\wCA(\sow)$ intersecting at a decoration $z$ with intersection index $\ind_z(\widetilde{e},\widetilde{h})=d$, there is a unique morphism $X_{\widetilde{e}}\to X_{\widetilde{h}}[d]$ whose cone corresponds to $X_{\we}$, where $\we$ is the graded arc $\widetilde{e}\wedge_z\widetilde{h}$. In other words, we have an equivalence  
$$\Cone(X_{\widetilde{e}}\to X_{\widetilde{h}}[d])\simeq X_{\we}.$$
\item Let $\pv$ be the subcategory of $\D(\sow)$ generated by those $X_{\we}$, where $\we$ is a graded closed arc in $\dAS$ with $\dAS$ the dual set of some mixed-angulation $\AS.$ Moreover, given a mixed-angulation $\AS$ with dual set $\dAS=\{\we_1,\we_2,\ldots,\we_n\}$, there is a finite heart $\h_{\AS}$ in $\pv$ generated by simples $X_{\we_1}, X_{\we_2},\dots, X_{\we_n}$.
\end{enumerate}
\end{assumption}
We consider the canonical heart $\h_0$ of $\pv$ corresponding to the initial mixed-angulation $\AS_0$. Let $\EGp(\pv)$ be the connected component of $\EG(\pv)$ consisting of all hearts obtained from $\h_0$ via repeated simple tilting.
\begin{corollary}\label{cor:ass}
With assumptions above, there is an isomorphism between exchange graphs:
\begin{equation} \label{EGpvdEG}
\EGp(\pv)\,\cong\,\EGp(\sow)\,,
\end{equation}
that is, the forward (or backward) flip of $\AS$ along $\wg$ corresponds to the simple forward (or backward) tilting of $\h_{\AS}$ at $X_{\we}$, where $\AS\in\EGp(\sow)$ and $\we$ is a dual arc of $\wg\in\AS$ with corresponding simple object $X_{\we}$ in the heart $\h_{\AS}$.
\end{corollary}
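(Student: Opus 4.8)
The plan is to realize the isomorphism \eqref{EGpvdEG} as the assignment $\AS\mapsto\h_{\AS}$, where $\h_{\AS}$ is the finite heart of $\pv$ attached to $\AS$ by \Cref{prop:CHQ}(3), and to show it intertwines forward flips with simple forward tiltings. Since $\EGp(\sow)$ and $\EGp(\pv)$ are, by construction, the components of $\EG(\sow)$, $\EG(\pv)$ generated from $\AS_0$, respectively $\h_0=\h_{\AS_0}$, by iterated forward and backward flips (resp. tiltings), it suffices to check: (i) $\AS\mapsto\h_{\AS}$ is well defined on $\EGp(\sow)$ and injective; and (ii) for every $\AS\in\EGp(\sow)$ and $\wg\in\AS$ with dual arc $\we\in\dAS$, the forward tilt of $\h_{\AS}$ at the simple $X_{\we}$ is $\h_{\Ags}$ (and dually for backward flips). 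Granting these, the map is an orientation-preserving graph morphism which is surjective (every heart in $\EGp(\pv)$ is $\h_{\AS}$ for some $\AS$ obtained from $\AS_0$ by the corresponding flips, by iterating (ii) from $\h_0$) and injective, hence an isomorphism. Part (i) is immediate: well-definedness is \Cref{prop:CHQ}(3), and injectivity follows since the simple objects of $\h_{\AS}$ are precisely $\{X_{\we}:\we\in\dAS\}$ while $\we\mapsto X_{\we}$ is injective by \Cref{prop:CHQ}(1), so $\h_{\AS}$ recovers $\dAS$ and hence $\AS$.

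The work is in (ii). I would fix $\AS$ with $\dAS=\{\we=\we_1,\we_2,\dots,\we_n\}$, put $S=X_{\we}$, and compare two descriptions of the simples of the forward tilt. On the categorical side, \Cref{propdef} gives $\Sim\big((\h_{\AS})^{\sharp}_{S}\big)=\{S[1]\}\cup\{\psi^{\sharp}_{S}(X_{\we_i})\mid i\neq1\}$ with $\psi^{\sharp}_{S}(X)=\Cone(f_X)[-1]$ and $f_X$ a minimal left $\langle S[1]\rangle$-approximation of $X$. On the combinatorial side, \Cref{rem:f.f.} describes $(\Ags)^{*}=\dAS^{\sharp}_{\we}$: the arc $\we$ persists with grading shifted by $1$, which matches $X_{\we[1]}=S[1]$; so it remains to identify $\psi^{\sharp}_{S}(X_{\we_i})$ with $X_{(\we_i)^{\sharp}_{\we}}$ for $i\neq1$, case by case as in \Cref{rem:f.f.}. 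If $\we_i$ does not reach an endpoint of $\we$ with intersection index $1$, then $(\we_i)^{\sharp}_{\we}=\we_i$, and I would show that $\Hom^{\bullet}(X_{\we_i},S[1])$ vanishes in the degrees relevant to the approximation — using that arcs of $\dAS$ meet only at decorations together with the index constraints coming from \Cref{rmk:ind} and the index-zero condition on $\AS$ — so that $f_{X_{\we_i}}=0$ and $\psi^{\sharp}_{S}(X_{\we_i})=X_{\we_i}$. If instead $\we_i$ is the nearest counter-clockwise closed arc from $\we$ about a common endpoint $z$, so $\ind_z(\we_i,\we)=1$, then \Cref{prop:CHQ}(2) furnishes a morphism $X_{\we_i}\to S[1]$ with cone $X_{\we_i\wedge_z\we}=X_{\Bt{\we}(\we_i)}$; after checking this morphism is minimal I obtain $\psi^{\sharp}_{S}(X_{\we_i})=X_{\Bt{\we}(\we_i)}=X_{(\we_i)^{\sharp}_{\we}}$, matching \Cref{rem:f.f.} once the grading of $\Bt{\we}(\we_i)$ is normalized so that $\dAS^{\sharp}_{\we}$ is again the dual of a mixed-angulation, equivalently so that $\Ags$ satisfies the index-zero condition. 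In the monogon case $(\we_i)^{\sharp}_{\we}=\Bt{\we}^{2}(\we_i)$, and I would get this by iterating \Cref{prop:CHQ}(2): the minimal left $\langle S[1]\rangle$-approximation of $X_{\we_i}$ now has target a two-step extension of shifts of $S$, and $\Cone(f_{X_{\we_i}})[-1]\simeq X_{\Bt{\we}^{2}(\we_i)}$. Collecting the three contributions gives $\Sim\big((\h_{\AS})^{\sharp}_{S}\big)=\{X_{\wa}:\wa\in\dAS^{\sharp}_{\we}\}=\Sim\h_{\Ags}$, and since a finite heart is determined by its set of simples, $(\h_{\AS})^{\sharp}_{S}=\h_{\Ags}$; the backward statement is identical with $\langle S[-1]\rangle$ in place of $\langle S[1]\rangle$.

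The main obstacle I expect is the adjacent-arc computation in (ii): verifying that the morphism supplied by \Cref{prop:CHQ}(2) is genuinely the minimal left approximation, with no spurious extra copies of $S[1]$ (this needs a $\Hom$-vanishing/dimension input slightly beyond what is literally assumed), and keeping the shift and grading bookkeeping coherent across the smoothing convention of \Cref{def:smoothingout}, the braid twist $\Bt{\we}$, and the normalization that makes $\dAS^{\sharp}_{\we}$ dual to $\Ags$ — with the monogon subcase, where the double twist $\Bt{\we}^{2}$ forces a genuinely two-step approximation, being the most delicate point.
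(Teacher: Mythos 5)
The paper itself contains no proof of \Cref{cor:ass}: it is stated without argument, as an immediate consequence of \Cref{prop:CHQ} --- the assumptions are tailored so that the statement is, in effect, imported from Corollary 5.1 of \cite{CHQ} (cf.\ the end of \Cref{sec:4}, where the categories $\D(\sow)$ arising from perverse schobers are said to satisfy these assumptions). Your proposal instead attempts a self-contained derivation, and the strategy you choose is the natural one: send $\AS\mapsto\h_{\AS}$, get injectivity from the arc-to-object injection of \Cref{prop:CHQ}(1), and match the simples of the tilted heart from \Cref{propdef} with the flipped dual arcs of \Cref{rem:f.f.} via the triangles of \Cref{prop:CHQ}(2).

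However, at the decisive step there is a genuine gap, which you yourself flag but do not close. \Cref{prop:CHQ}(2) provides, for two arcs meeting at a decoration with index $d$, a single morphism $X_{\widetilde{e}}\to X_{\widetilde{h}}[d]$ with prescribed cone; it gives no control over the full graded space $\Hom^{\bullet}(X_{\we_i},X_{\we})$. Consequently none of the following can be deduced from the assumptions as literally stated: the vanishing $\Hom(X_{\we_i},S[1])=0$ when $\we_i$ is not the nearest counterclockwise arc (arcs of $\dAS$ may still share endpoints with $\we$ at other indices, and nothing stated excludes morphisms in the relevant degree); the minimality of the approximation $f_{X_{\we_i}}$, i.e.\ the absence of extra copies of $S[1]$ or of self-extension contributions when $S$ is non-rigid (relevant precisely because $\<S[1]\>$ is an extension closure); and the two-step structure of the approximation in the monogon case that should produce $\Bt{\we}^{2}(\we_i)$. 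All of these require the finer arc-to-$\Hom$ correspondence (dimensions computed by graded intersections) established in \cite{CHQ}, which is not part of \Cref{prop:CHQ}. So your plan is sound as an outline, but to complete it you must either strengthen the hypotheses by such a $\Hom$-correspondence or, as the paper implicitly does, quote the corresponding result of \cite{CHQ}.
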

%=========================================================
\subsection{The quotient categories associated to collapsed surfaces}\label{sec:QuotCat}
%=========================================================
The collapse operation  
\begin{tikzcd}  
\cols:\sow \ar[r,rightsquigarrow] & \colwsur  
\end{tikzcd}  
induces a surjective map from the set $\uCA(\sow)$ to the set of graded closed arcs on $\colwsur$. Any arc $\we\in\uCA(\sow)$ with endpoints at decorations in $\subsur$ and homotopic to a closed arc in $\subsur$ vanishes under collapse. We obtain the set of closed arc in $\colwsur$ induced from collapsing as:
\[\uCA(\colwsur):=\cols(\uCA(\sow)).\]

Let $\D(\subsur)$ be the thick subcategory of $\D(\sow)$ such that it satisfies the three conditions in \Cref{prop:CHQ}. We define $\D(\colwsur)$ to be the Verdier quotient of $\D(\sow)$ with respect to $\D(\subsur)$, which fits into the following short exact sequence of triangulated categories: 
\begin{equation}\label{eq:ses.cats}
\begin{tikzcd}
    0 \ar[r] & \D(\subsur) \ar[r] & \Dw \ar[r,"\pi"] &\D(\colwsur) \ar[r] & 0.
\end{tikzcd}
\end{equation}
\begin{proposition}\label{prop:objcor}
There is an injection from the set $\uCA(\colwsur)$ of graded closed arcs in $\colwsur$ to the set of objects in $\D(\colwsur)$, that is, for each closed arc $\ue=\cols(\we)\in\uCA(\colwsur)$, the corresponding arc object $X_{\ue}$ is uniquely given by $\pi(X_{\we})$, where $X_{\we}\in\D(\sow)$ is the object corresponding to $\we$.
\end{proposition}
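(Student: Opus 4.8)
The plan is to prove two things separately: that $\pi(X_{\we})$ depends only on $\ue=\cols(\we)$ and not on the chosen lift $\we$ (\emph{well-definedness}), and that the resulting assignment $\ue\mapsto X_{\ue}:=\pi(X_{\we})$ is \emph{injective}. The organizing principle throughout is that the information distinguishing two lifts of the same arc lives in $\D(\subsur)$, hence is annihilated by the quotient functor $\pi$.

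\textbf{Well-definedness.} The first step is topological: describe the fibres of $\cols$ on non-vanishing graded closed arcs. Two such arcs $\we,\we'$ satisfy $\cols(\we)=\cols(\we')$ precisely when $\we'$ is obtained from $\we$ by a finite sequence of modifications supported inside $\subsur$ --- braid twists $\Bt{\we_0}^{\pm1}$ along graded closed arcs $\we_0$ homotopic into $\subsur$, together with reassignments of an endpoint lying in $\subsur$ along such an arc (i.e. replacing $\wa$ by $\wa\wedge_{z}\we_0$) --- with the gradings arranged so that the configuration returns to the graded arc $\ue$. On the categorical side, $X_{\we_0}\in\D(\subsur)$, and by condition $(2)$ of \Cref{prop:CHQ} each elementary modification fits the relevant objects into a triangle one of whose terms is a complex of shifts of $X_{\we_0}$; equivalently, $\Bt{\we_0}$ acts on $\D(\sow)$ as the spherical twist $\Tw_{X_{\we_0}}$, which fits $X$ into a triangle $\Hom^{\bullet}(X_{\we_0},X)\otimes_{\k}X_{\we_0}\to X\to\Tw_{X_{\we_0}}(X)$. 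Applying $\pi$ and using $\pi(\D(\subsur))=0$, each modification becomes an isomorphism up to a shift, and composing around the fibre the shifts telescope to the shift carrying $\cols(\we)$ to $\cols(\we')$, which is trivial because these arcs agree; hence $\pi(X_{\we})\simeq\pi(X_{\we'})$. In particular $\pi\circ\Tw_{X_{\we_0}}\simeq\pi$ for $\we_0$ homotopic into $\subsur$, and $\ue\mapsto X_{\ue}$ is compatible with shifts since $X_{\we[k]}\simeq X_{\we}[k]$.

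\textbf{Injectivity.} Let $\ue\neq\ue'$ in $\uCA(\colwsur)$, with lifts $\we,\we'$. I would compare them through $\sow$ using the calculus of fractions for the Verdier quotient \eqref{eq:ses.cats}: for any graded closed arc $\widetilde{\delta}$ of $\sow$ not homotopic into $\subsur$, $\Hom^{\bullet}_{\D(\colwsur)}(\pi(X_{\widetilde{\delta}}),X_{\ue})$ is a filtered colimit of spaces $\Hom^{\bullet}_{\D(\sow)}(X_{\widetilde{\delta}},Y)$ over maps $X_{\we}\to Y$ with cone in $\D(\subsur)$, and the lifts $\we''$ of $\ue$ form a cofinal part of this system. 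On such terms the arc--object dictionary for $\D(\sow)$ furnished by \Cref{prop:CHQ} (under which total $\Hom^{\bullet}$-dimensions compute graded intersection numbers) identifies the dimension with $\Int_{\sow}(\widetilde{\delta},\we'')$, and passing to the colimit cancels exactly the intersections that can be pushed into $\subsur$, so that $\dim_{\k}\Hom^{\bullet}_{\D(\colwsur)}(\pi(X_{\widetilde{\delta}}),X_{\ue})$ equals the graded intersection number of $\cols(\widetilde{\delta})$ and $\ue$ computed on $\colwsur$. Since distinct graded closed arcs on $\colwsur$ are separated by their graded intersection numbers with the arcs of $\uCA(\colwsur)$, some $\cols(\widetilde{\delta})$ has different intersection number with $\ue$ and with $\ue'$, so $X_{\ue}\not\simeq X_{\ue'}$. (Alternatively, when $\ue$ is a dual arc of a mixed-angulation $\uA\in\EGb(\colwsur)$, refine $\uA$ to $\AS\in\EGp(\sow)$ so that $X_{\we}$ is a simple of the heart $\h_{\AS}$ of $\pv$ by \Cref{cor:ass}; then $X_{\ue}=\pi(X_{\we})$ is a simple of the quotient heart $\pi(\h_{\AS})$ of $\pvq$, which determines it among dual arcs.)

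\textbf{Expected main obstacle.} The crux is the first step of well-definedness: pinning down the fibres of $\cols$ and the accompanying grading bookkeeping. One must check that every homotopy in $\colwsur$ between two collapsed graded arcs lifts to a word in modifications supported in $\subsur$ --- using that the collapse fills the collars with the horizontal foliation, so the grading extends uniquely --- and that the shifts introduced by the smoothing relation telescope correctly; this is cleanest via honest braid twists, for which $\pi\circ\Tw_{E}$ is manifestly shift-free when $E\in\D(\subsur)$. The second delicate point is the colimit computation in the injectivity argument and its reading as a graded intersection number on $\colwsur$, i.e. the claim that the excess intersections with $\subsur$ are precisely those removed by the filtered system; granting this, injectivity reduces to the classical fact that arcs are determined by their intersection numbers.
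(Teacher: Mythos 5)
Your strategy for well-definedness (elementary modifications supported in $\subsur$ become isomorphisms after applying $\pi$, via the smoothing triangle) is the same idea as the paper's, but the paper's proof consists precisely of the step you defer. The paper does not invoke a general description of the fibres of $\cols$ or generation by braid twists; it takes two lifts $\we,\we'$ of $\ue$, uses that the region they bound together with $\partial\subsur$ is contractible, and then carries out an explicit reduction: if $\we,\we'$ do not meet outside the decorations, a single smoothing with an arc $\wg\in\uCA(\subsur)$, graded so that $\ind_z(\wg,\we)=0$, gives $\pi(X_{\we})\cong\pi(X_{\wg\wedge\we})=\pi(X_{\we'})$ by condition (2) of \Cref{prop:CHQ}; self-intersections of a lift inside $\subsur$ and intersections between $\we$ and $\we'$ inside $\subsur$ are then removed one at a time by further smoothings with arcs of $\subsur$. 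Your claim that any two lifts differ by a word in braid twists along arcs homotopic into $\subsur$ plus endpoint reassignments, with gradings returning to $\ue$, is exactly what has to be proved, and you name it as the main obstacle rather than proving it; this is a genuine gap, since the paper's argument is this reduction. Moreover, the ``equivalently, $\Bt{\we_0}$ acts as the spherical twist $\Tw_{X_{\we_0}}$'' step is not available in this setting: \Cref{prop:CHQ} assumes only the arc-to-object injection, the smoothing triangle, and the hearts from mixed-angulations, not that braid twists are realized by spherical twist autoequivalences of $\D(\sow)$; the shift bookkeeping is handled in the paper simply by normalizing $\ind_z(\wg,\we)=0$ at each smoothing, not by a telescoping argument.

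For injectivity, your calculus-of-fractions computation presupposes two inputs that the paper's hypotheses do not provide: that total $\Hom^{\bullet}$-dimensions in $\D(\sow)$ compute graded intersection numbers of arcs, and that the lifts of $\ue$ form a cofinal subsystem of the localization diagram so that the colimit ``cancels exactly'' the intersections pushed into $\subsur$. Neither statement follows from conditions (1)--(3) of \Cref{prop:CHQ}, and the cofinality/cancellation claim is itself a nontrivial assertion left unargued, so this part of your proposal does not go through at the stated level of generality (your parenthetical alternative only covers arcs dual to mixed-angulations in $\EGb(\colwsur)$). For comparison, the paper's proof of \Cref{prop:objcor} addresses only well-definedness of $\ue\mapsto\pi(X_{\we})$ and does not supply these missing inputs either, so you cannot import them from elsewhere in the paper.
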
  
\begin{proof}
We only need to show that the map $\ue=\cols(\we)\mapsto X_{\we}$ is well-defined, i.e. for any closed arcs $\we,\we'\in\uCA(\sow)$ satisfying $\cols(\we)=\cols(\we')=\ue$, we have $\pi(X_{\we})=\pi(X_{\we'})$, where $\pi:\D(\sow)\to\uD(\colwsur)$ is the projection in \eqref{eq:ses.cats}. By the assumption $\cols(\we)=\cols(\we')=\ue$, the domain bounded by $\we$, $\we'$ and $\partial\subsur$ in $\sow\backslash\subsur$ is contractible. 

\begin{figure}
\begin{tikzpicture}
  % 填充绿色圆
  \fill[green!10] (0,0) circle[radius=2cm];
\draw[thick,fill = gray!10](-1,0) circle (.2);
  % 实线弧：135° -> 225°
  \draw[thick] (225:2cm) arc[start angle=225, end angle=135, radius=2cm];

  % 虚线弧：225° -> 495° (等效于 225° -> 135° 绕一圈)
  \draw[thick,dashed] (225:2cm) arc[start angle=225, end angle=495, radius=2cm];
\fill[white]
    plot[domain=-0.5:0.5, smooth] (\x, { -0.2*(1-(2*\x)^2) })  % 下弧
    -- plot[domain=0.3:-0.3, smooth] (\x, { -0.1 + 0.05*(1-(\x/0.3)^2) }) % 上弧
    -- cycle;

  % 画出两条边界弧
  \draw[thick]
    plot[domain=-0.5:0.5, smooth] (\x, { -0.2*(1-(2*\x)^2) });  % 下弧

  \draw[thick]
    plot[domain=-0.3:0.3, smooth] (\x, { -0.1 + 0.05*(1-(\x/0.3)^2) }); % 上弧
  \coordinate (A) at (0.9,1);
  \coordinate (B) at (150:2);
  \coordinate (C) at (0.9,-1);
  \coordinate (D) at (210:2);

  % 上弧：A -> B，控制点调平滑
  \draw[thick] (A) .. controls (-.5,1.2) .. (B);

  % 下弧：C -> D，控制点调平滑
  \draw[thick] (C) .. controls (-.5,-1.2) .. (D);

  % 中间曲线：A -> C，经 (-1.5,0)，平滑
 \draw[thick,red] (A) .. controls (-2.5,0.7) and (-2.5,-0.7) .. (C);
 \draw (.9,1)\ww node[red,right,font=\scriptsize]{$z$} (.9,-1)\ww node[red,right,font=\scriptsize]{$z'$};
\draw[thick] (-.5,1.4) node{$\we$} (-.5,-1.4) node{$\we'$};
\draw[red] (0,.6) node{$\wg$};
\begin{scope}[shift={(6,0)}]
\fill[green!10] (0,0) circle[radius=2cm];
\draw[thick,fill = gray!10](-1,0) circle (.2);
  % 实线弧：135° -> 225°
  \draw[thick] (225:2cm) arc[start angle=225, end angle=135, radius=2cm];

  % 虚线弧：225° -> 495° (等效于 225° -> 135° 绕一圈)
  \draw[thick,dashed] (225:2cm) arc[start angle=225, end angle=495, radius=2cm];
\fill[white]
    plot[domain=-0.5:0.5, smooth] (\x, { -0.2*(1-(2*\x)^2) })  % 下弧
    -- plot[domain=0.3:-0.3, smooth] (\x, { -0.1 + 0.05*(1-(\x/0.3)^2) }) % 上弧
    -- cycle;

  % 画出两条边界弧
  \draw[thick]
    plot[domain=-0.5:0.5, smooth] (\x, { -0.2*(1-(2*\x)^2) });  % 下弧

  \draw[thick]
    plot[domain=-0.3:0.3, smooth] (\x, { -0.1 + 0.05*(1-(\x/0.3)^2) }); % 上弧
  \coordinate (A) at (1.5,0);
  \coordinate (B) at (150:2);
  \coordinate (D) at (210:2);

  % 上弧：A -> B，控制点调平滑
  \draw[thick] (A) .. controls (.5,.9) .. (B);

  % 下弧：C -> D，控制点调平滑
  \draw[thick] (A) .. controls (.5,-.9) .. (D);

  % 中间曲线：A -> C，经 (-1.5,0)，平滑
 \draw[thick,red] (A) .. controls (-2.5,2.1) and (-2.5,-2.1) .. (A);
 \draw (1.5,0)\ww node[red,right,font=\scriptsize]{$z$} ;
\draw[thick] (-.5,1.2) node{$\we$} (-.5,-1.2) node{$\we'$};
\draw[red] (0,.3) node{$\wg$};
\end{scope}
\end{tikzpicture}
    \caption{Locally in $\subsur_i$, where $\we$ and $\we'$ does not intersect in $\sow\backslash\Delta$}
    \label{fig:3.8.1}
\end{figure}
Consider the case that $\we$ and $\we'$ have no intersection in $\sow\backslash\Delta$. Without loss of generality, we assume an endpoint $z$ of $\we$ and an endpoint $z'$ of $\we'$ are contained in some connected component $\subsur_i$ ($z$ and $z'$ may coincide). For the other two endpoints of $\we$ and $\we'$, we assume that they are in $\sow\backslash\subsur$ and coincide (or they are in some connected component $\subsur_j$ and we can deal with this case similarly). Take an arc $\wg\in\uCA(\subsur)$ connecting $z$ and $z'$ such that the triangle formed by $\wg$, $\we$ and $\we'$ is contractible, cf. \Cref{fig:3.8.1}. If the angle from $\wg$ to $\we$ is in the contractible triangle above, we take suitable grading such that $\ind_z(\wg,\we)=0$. By (2) of \Cref{prop:CHQ}, we have a distinguished triangle
\begin{equation}
X_{\wg}\to X_{\we}\to X_{\wg\wedge\we}\to X_{\wg}[1]
\end{equation}
in $\D(\sow)$. Applying $\pi$ to this triangle, we have
\begin{equation}
\pi(X_{\we})=\pi(X_{\wg\wedge\we}).
\end{equation}
Moreover, $\wg\wedge\we$ is homotopic to $\we'$, and thus we have 
\begin{equation}
\pi(X_{\we'})=\pi(X_{\wg\wedge\we})=\pi(X_{\we}).
\end{equation}
If the angle from $\wg$ to $\we$ is not in the contractible triangle above, then the angle from $\wg$ to $\we'$ is in the contractible triangle, and we can prove by changing $\we$ and $\we'$.
\begin{figure}
\begin{tikzpicture}
  % 填充绿色圆
  \fill[green!10] (0,0) circle[radius=2cm];
\draw[thick,fill = gray!10](-1.1,0.2) circle (.2);
%\draw[thick,fill = gray!10](1.1,0.2) circle (.2);
  % 实线弧：135° -> 225°
  \draw[thick] (225:2cm) arc[start angle=225, end angle=135, radius=2cm];

  % 虚线弧：225° -> 495° (等效于 225° -> 135° 绕一圈)
  \draw[thick,dashed] (225:2cm) arc[start angle=225, end angle=495, radius=2cm];
\fill[white]
    plot[domain=-0.5:0.5, smooth] (\x, { -0.2*(1-(2*\x)^2) })  % 下弧
    -- plot[domain=0.3:-0.3, smooth] (\x, { -0.1 + 0.05*(1-(\x/0.3)^2) }) % 上弧
    -- cycle;

  % 画出两条边界弧
  \draw[thick]
    plot[domain=-0.44:0.44, smooth] (\x, { -0.2*(1-(2*\x)^2) });  % 下弧

  \draw[thick]
    plot[domain=-0.3:0.3, smooth] (\x, { -0.1 + 0.05*(1-(\x/0.3)^2) }); % 上弧
\coordinate (P0) at (150:2);
  \coordinate (P1) at (0,-1);
  \coordinate (P2) at (1,1);
  \coordinate (Self) at (0,1); % 自交
  % 绘制轮廓
  \draw[thick]
    (P0)
    .. controls (1.2,1) and (1,-1) .. (P1)
    .. controls (-1.2,-1) and (-1,1) .. (P2);
  \draw[thick,red]
    (P2)
    .. controls (-.5,.8) and (-1,-.65) .. (-.1,-.7)
    .. controls (.5,-.8) and (1,.7) .. (P2);

 \draw (1,1)\ww node[red,right,font=\scriptsize]{$z$};
\draw[thick] (-.5,1.2) node{$\we$};
\draw[red] (1,.3) node{$\wg$};
\end{tikzpicture}
    \caption{Smoothing out the self-intersections of $\we$ in $\subsur_i$}
    \label{fig:3.8.2}
\end{figure}
Otherwise, $\we$ and $\we'$ intersect in $\surf^{\circ}\backslash\Delta$. 

We may assume that both $\we$ and $\we'$ do not has self-intersections in $\subsur$. If not, we can take some $\wg$ in $\subsur$ such that $\wg\wedge\we$ does not have self-intersections in $\subsur$, cf. \Cref{fig:3.8.2}. Similar as above, we have that $\we$ and $\wg\wedge\we$ induce the isomorphic objects in the quotient category. 

We set that the intersections between $\we$ and $\we'$ in a connected component $\subsur_i$ are denoted by $p_1,\cdots,p_k$ (the order begins from $z$ or $z'$ and is along $\we$ or $\we'$). Denote $\we$ by $\we_0$ and $\we'$ by $\we'_0$. For any $i\geq 1$, we take an arc $\wg_i\in\uCA(\subsur)$ connecting $z$ and $z'$ such that the triangle formed by $\wg_i$, the arc segment $\wideparen{zp_i}$ and $\wideparen{z'p_i}$ is contractible, cf. \Cref{fig:3.8.3}. If the angle from $\wg_i$ to $\we_{i-1}$ is in the contractible triangle above, we take suitable grading such that $\ind_z(\wg_i,\we_{i-1})=0$, and let $\we_i=\wg_i\wedge\we_{i-1}$ and $\we'_i=\we'_{i-1}$. If the angle from $\wg_i$ to $\we_{i-1}$ is not in the contractible triangle, we can change $\we_{i-1}$ and $\we'_{i-1}$. We get
\begin{equation}
\pi(X_{\we})=\pi(X_{\we_1})=\cdots=\pi(X_{\we_k})
\end{equation}
and
\begin{equation}
\pi(X_{\we'})=\pi(X_{\we'_1})=\cdots=\pi(X_{\we'_k}),
\end{equation}
where $\we_k$ and $\we'_k$ have no intersection in $\surf^{\circ}\backslash\Delta$. Then we can prove the well-definedness by the consequence of the first case.
\begin{figure}\centering
\makebox[\textwidth][c]{
\begin{tikzpicture}[scale=1.2]
 \fill[green!10] (0,0) circle[radius=2cm];
\draw[thick,fill = gray!10](-1.5,0) circle (.2);
\draw[thick,fill = gray!10](1.5,0) circle (.2);
  % 实线弧：135° -> 225°
  \draw[thick] (225:2cm) arc[start angle=225, end angle=135, radius=2cm];

  % 虚线弧：225° -> 495° (等效于 225° -> 135° 绕一圈)
   % 圆弧
  \draw[thick,dashed] (225:2cm) arc[start angle=225, end angle=495, radius=2cm];

  % 填充区域（整体往下移 1）
  \fill[white]
    plot[domain=-0.5:0.5, smooth] (\x, { -0.2*(1-(2*\x)^2) -.8 })  % 下弧
    -- plot[domain=0.3:-0.3, smooth] (\x, { -0.1 + 0.05*(1-(\x/0.3)^2) -.8 }) % 上弧
    -- cycle;

  % 画出两条边界弧（整体往下移 1）
  \draw[thick]
    plot[domain=-0.44:0.44, smooth] (\x, { -0.2*(1-(2*\x)^2) -.8 });  % 下弧

  \draw[thick]
    plot[domain=-0.3:0.3, smooth] (\x, { -0.1 + 0.05*(1-(\x/0.3)^2) -.8 }); % 上弧
  \coordinate (A) at (1.3,.8);
  \coordinate (B) at (160:2);
  \coordinate (C) at (1.3,-.5);
  \coordinate (D) at (190:2);

  % 上弧：A -> B，控制点调平滑
  \draw[thick]  (B)
    .. controls (-1.5,.8) and (-1.2,.6) .. (-1,.3)
    .. controls (-0.7,0) and (-0.7,-1.5) .. (0,-1.5)
    .. controls (0.6,-1.5) and (0.6,0) .. (.8,.3)
    .. controls (.9,.6) and (1.2,.8) .. (A);

  % 下弧：C -> D，控制点调平滑
  \draw[thick] (C) .. controls (0,-.6) .. (D);

  % 中间曲线：A -> C，经 (-1.5,0)，平滑
 \draw[thick,red] (A) .. controls (.8,0.5) and (.8,-0.5) .. (C);
 \draw (A)\ww node[red,right,font=\scriptsize]{$z$} (C)\ww node[red,right,font=\scriptsize]{$z'$};
\draw[thick] (-1,.9) node{$\we=\we_0$} (-1.2,-.7) node{$\we'=\we'_0$};
\draw[red] (1.25,.4) node{$\wg_1$};
\draw[thick] (.45,-.3)node{$p_1$}(-.5,-.3)node{$p_k$};
\begin{scope}[shift={(5,0)}]
\fill[green!10] (0,0) circle[radius=2cm];
\draw[thick,fill = gray!10](-1.5,0) circle (.2);
\draw[thick,fill = gray!10](1.5,0) circle (.2);
  % 实线弧：135° -> 225°
  \draw[thick] (225:2cm) arc[start angle=225, end angle=135, radius=2cm];

  % 虚线弧：225° -> 495° (等效于 225° -> 135° 绕一圈)
   % 圆弧
  \draw[thick,dashed] (225:2cm) arc[start angle=225, end angle=495, radius=2cm];

  % 填充区域（整体往下移 1）
  \fill[white]
    plot[domain=-0.5:0.5, smooth] (\x+.2, { -0.2*(1-(2*\x)^2) -.8 })  % 下弧
    -- plot[domain=0.3:-0.3, smooth] (\x+.2, { -0.1 + 0.05*(1-(\x/0.3)^2) -.8 }) % 上弧
    -- cycle;

  % 画出两条边界弧（整体往下移 1）
  \draw[thick]
    plot[domain=-0.44:0.44, smooth] (\x+.2, { -0.2*(1-(2*\x)^2) -.8 });  % 下弧

  \draw[thick]
    plot[domain=-0.3:0.3, smooth] (\x+.2, { -0.1 + 0.05*(1-(\x/0.3)^2) -.8 }); % 上弧
  \coordinate (A) at (1.3,.8);
  \coordinate (B) at (160:2);
  \coordinate (C) at (1.3,-.5);
  \coordinate (D) at (190:2);

  % 上弧：A -> B，控制点调平滑
  \draw[thick]  (B)
    .. controls (-1.5,.8) and (-1.2,.6) .. (-1,.3)
    .. controls (-0.7,0) and (-0.7,-1.5) .. (0,-1.5)
    .. controls (0.6,-1.5)  and (0.7,-1) .. (C);

  % 下弧：C -> D，控制点调平滑
  \draw[thick] (C) .. controls (0,-.6) .. (D);

  % 中间曲线：A -> C，经 (-1.5,0)，平滑

\draw[thick,red] (C)    .. controls (0.2,-0.7) and (-0.4,-0.6) ..
  (-0.4,-0.9)
    .. controls (-0.4,-1.3) and (0.1,-1.3) ..
  (.2,-1.23)
    .. controls (0.1,-1.3) and (0.9,-.8) ..(C);
% \draw[thick,red] (C) .. controls (-.7,-0.8) ..(0,-1.2).. controls (.6,-1) .. (C);
 \draw (C)\ww node[red,right,font=\scriptsize]{$z'$};
\draw[thick] (-1,.8) node{$\we_1$} (-1.2,-.7) node{$\we'_1$};
\draw[red] (1.2,-1) node{$\wg_2$};
\draw[thick] (-.5,-.3)node{$p_k$};
\end{scope}
\begin{scope}[shift={(10,0)}]
\fill[green!10] (0,0) circle[radius=2cm];
\draw[thick,fill = gray!10](-1.5,0) circle (.2);
\draw[thick,fill = gray!10](1.5,0) circle (.2);
  % 实线弧：135° -> 225°
  \draw[thick] (225:2cm) arc[start angle=225, end angle=135, radius=2cm];

  % 虚线弧：225° -> 495° (等效于 225° -> 135° 绕一圈)
   % 圆弧
  \draw[thick,dashed] (225:2cm) arc[start angle=225, end angle=495, radius=2cm];

  % 填充区域（整体往下移 1）
  \fill[white]
    plot[domain=-0.5:0.5, smooth] (\x, { -0.2*(1-(2*\x)^2) -.8 })  % 下弧
    -- plot[domain=0.3:-0.3, smooth] (\x, { -0.1 + 0.05*(1-(\x/0.3)^2) -.8 }) % 上弧
    -- cycle;

  % 画出两条边界弧（整体往下移 1）
  \draw[thick]
    plot[domain=-0.44:0.44, smooth] (\x, { -0.2*(1-(2*\x)^2) -.8 });  % 下弧

  \draw[thick]
    plot[domain=-0.3:0.3, smooth] (\x, { -0.1 + 0.05*(1-(\x/0.3)^2) -.8 }); % 上弧
  \coordinate (A) at (1.3,.8);
  \coordinate (B) at (160:2);
  \coordinate (C) at (1.3,-.5);
  \coordinate (D) at (190:2);

  % 上弧：A -> B，控制点调平滑
  \draw[thick]  (B)
    .. controls (-.2,-.2) .. (C);

  % 下弧：C -> D，控制点调平滑
  \draw[thick] (C) .. controls (0,-.6) .. (D);

 \draw  (C)\ww node[red,right,font=\scriptsize]{$z'$};
\draw[thick] (-1,.5) node{$\we_2$} (-1.2,-.7) node{$\we'_2$};

\end{scope}
\draw[thick,->] (2.2,0) -- (2.8,0);
\draw[thick,->] (7.2,0) -- (7.8,0);
\end{tikzpicture}}
    \caption{The process of solving the intersections in $\subsur_i$ between $\we$ and $\we'$}
    \label{fig:3.8.3}
\end{figure}
\end{proof}
%=========================================================
\subsection{The exchange graphs of hearts of quotient type and its principal part} \label{sec:EGquotheart}
%=========================================================
For $\AS$ a mixed-angulation of $\sow$, we denote by $\AS|_{\subsur}$ the mixed-angulation of $\subsur$ induced by $\AS$ by regarding it as a subset of $\AS$ up to homotopy. We denote $\dAS|_{\subsur}$ by the dual set of $\AS|_{\subsur}$. By forgetting the open arcs in $\AS|_{\subsur}$, we obtain a mixed-angulation of $\colwsur$, which is denoted by $\uA$. Similarly, we denote its dual set as $\overline{\dAS}$.

Let $\pvs$ be the subcategory of $\D(\subsur)$ generated by those $X_{\we}$, where $\we$ is a graded closed arc in $\dAS|_{\subsur}$. Then \eqref{eq:ses.cats} induces the following Verdier quotient of triangulated categories:
\begin{equation}\label{eq:ses.pvds}
\begin{tikzcd}
    0 \ar[r] & \pvs \ar[r] & \pv \ar[r,"\pi"] &\pvq \ar[r] & 0,
\end{tikzcd}
\end{equation}
where $\pvq$ is the subcategory of $\D(\colwsur)$ generated by $\{X_{\we}\}$ with $\we$ being the graded closed arc in $\overline{\dAS}$.
We consider a special class of bounded t-structure hearts in the quotient category $\pvq$, which we refer to as “of quotient type”. More precisely, our attention is on those hearts of $\pvq$ that arise as the quotient of a heart in $\pv$ by a heart in $\pvs$. For more details, we refer to Proposition 2.20 in \cite{AGH} and Definition 6.3 in \cite{BMQS}. Let $\EG(\pvq)$ be the exchange graph of the quotient category. We call its full subgraph $\EGb(\pvq)$ the \emph{principal part} with vertices which are hearts of quotient type.
\begin{proposition}\label{prop:vertex}
Let $\uA$ be a mixed angulation of $\colwsur$ whose dual set of graded closed arcs is $\overline{\SS}=\{\overline{\eta}_1,\overline{\eta}_2,\ldots,\overline{\eta}_k\}$. We have that the set $\{X_{\overline{\eta}_1},X_{\overline{\eta}_2},\ldots,X_{\overline{\eta}_k}\}$ generates a heart of $\pvq$ of quotient type.
\end{proposition}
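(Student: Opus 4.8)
The plan is to realize $\{X_{\overline{\eta}_1},\dots,X_{\overline{\eta}_k}\}$ as the set of simple objects of a heart of $\pvq$ of the form $\h_\AS/\h_{\AS|_\subsur}$. First I would choose a refinement $\AS$ of $\uA$ that is a mixed-angulation of $\sow$ such that $\AS|_\subsur$ is a mixed-angulation of $\subsur$ and $\uA$ is the mixed-angulation of $\colwsur$ induced from $\AS$ by forgetting the arcs of $\AS|_\subsur$. Such a refinement exists: for each connected component $\subsur_i$ one un-collapses the discs around the new decorations and glues in $\subsur_i$ equipped with any mixed-angulation of it, exactly as in the construction recalled at the beginning of the proof of \Cref{prop:refine_of_flip} (the side count of the relevant $\uA$-polygons matches the number of marked points on $\partial\subsur_i$ by \Cref{ass1}, and the gradings match via the canonical grading of the collapse). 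By \Cref{prop:CHQ}(3), the dual set $\dAS=\{\we_1,\dots,\we_n\}$ generates a finite heart $\h_\AS$ in $\pv$ with $\Sim\h_\AS=\{X_{\we_1},\dots,X_{\we_n}\}$, and the dual set $\dAS|_\subsur\subseteq\dAS$ of $\AS|_\subsur$ generates a finite heart $\h_{\AS|_\subsur}$ in $\pvs$ whose simples form the sub-family $\{X_{\we}:\we\in\dAS|_\subsur\}$ of $\Sim\h_\AS$.

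Next I would run the quotient-heart machinery. Since $\pvs$ is precisely the thick subcategory of $\pv$ generated by a subset of the simples of the finite heart $\h_\AS$, the pair $(\h_\AS,\h_{\AS|_\subsur})$ satisfies the hypotheses of \cite[Proposition~2.20]{AGH} (cf.\ also \cite[Definition~6.3]{BMQS}): the Verdier quotient $\pvq=\pv/\pvs$ inherits a bounded t-structure whose heart is $\h_\AS/\h_{\AS|_\subsur}$, a heart of quotient type by definition. Moreover, as $\h_\AS$ has finite length and $\h_{\AS|_\subsur}$ is the Serre subcategory generated by the sub-family $\{X_{\we}:\we\in\dAS|_\subsur\}$ of its simples, the quotient abelian category $\h_\AS/\h_{\AS|_\subsur}$ again has finite length and its simple objects are exactly the nonzero images $\pi(X_{\we})$ for $\we\in\dAS\setminus\dAS|_\subsur$.

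It then remains to match the indexing sets. The open arcs of $\AS$ not contained in $\AS|_\subsur$ are, after forgetting $\AS|_\subsur$, precisely the arcs of $\uA$; passing to dual closed arcs, the collapse $\cols\colon\sow\rightsquigarrow\colwsur$ carries $\dAS\setminus\dAS|_\subsur$ bijectively onto $\overline{\dAS}=\overline{\SS}=\{\overline{\eta}_1,\dots,\overline{\eta}_k\}$. Hence for each $\we\in\dAS\setminus\dAS|_\subsur$, dual to the corresponding arc of $\uA$, we have $\cols(\we)=\overline{\eta}_i$, and by \Cref{prop:objcor} $\pi(X_{\we})=X_{\cols(\we)}=X_{\overline{\eta}_i}$, which is nonzero, being an arc object on $\colwsur$. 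Therefore $\Sim(\h_\AS/\h_{\AS|_\subsur})=\{X_{\overline{\eta}_1},\dots,X_{\overline{\eta}_k}\}$, so this set generates the heart $\h_\AS/\h_{\AS|_\subsur}$ of $\pvq$, which is of quotient type, as claimed.

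I expect the main obstacle to be checking that the pair $(\h_\AS,\h_{\AS|_\subsur})$ genuinely meets the ``suitable conditions'' behind \cite[Proposition~2.20]{AGH}, i.e.\ that $\h_{\AS|_\subsur}$ sits inside $\h_\AS$ compatibly with the inclusion $\pvs\hookrightarrow\pv$ (concretely, that $\pvs$ is the extension-closure of the chosen sub-family of simples together with their shifts, so that $\h_{\AS|_\subsur}=\h_\AS\cap\pvs$); once this is in place, the description of $\Sim(\h_\AS/\h_{\AS|_\subsur})$ and the bookkeeping with \Cref{prop:objcor} are routine. A secondary point to handle carefully is that the refinement $\AS$ can be produced for an \emph{arbitrary} mixed-angulation $\uA$ of $\colwsur$, not only for those in $\EGb(\colwsur)$, which relies on the same local case analysis (types I--IV) as in the proof of \Cref{prop:refine_of_flip}.
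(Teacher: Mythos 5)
Your proposal is correct and follows essentially the same route as the paper: choose a refinement $\AS$ of $\uA$, take the finite heart $\h_{\AS}$ from \Cref{prop:CHQ}(3), observe that the simples dual to arcs of $\AS|_{\subsur}$ generate a Serre subcategory $\h_{\AS}\cap\pvs$, pass to the quotient heart via the machinery of \cite[Proposition~2.20]{AGH}, and identify its simples with $\{X_{\overline{\eta}_1},\ldots,X_{\overline{\eta}_k}\}$ using \Cref{prop:objcor}. Your write-up is somewhat more explicit than the paper's (on the existence of the refinement and on the hypotheses behind the quotient-heart construction), but the argument is the same.
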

\begin{proof}
We prove that the heart is of quotient type by selecting a refinement. We take $\AS$ to be any refinement of $\uA$ and $\h_{\AS}$ to be the heart associated to the dual $\dAS=\{\we_1,\we_2,\ldots,\we_n\}$ generated by its simples $X_{\we_1}, X_{\we_2},\ldots, X_{\we_n}$, where $\pi(X_{\we_i})=X_{\overline{\eta}_i}$ for $1\leq i\leq k$ by \Cref{prop:objcor}. Then $\h_{\AS}\cap\pvs$ is a finite heart of $\pvs$ which is generated by the set of simples $X_{\we_{k+1}},,\ldots, X_{\we_n}$. The finiteness of hearts ensure that $\h_{\AS}\cap\pvs$ is a Serre subcategory of $\h$. Thus the set of simples $\{X_{\overline{\eta}_1},X_{\overline{\eta}_2},\ldots,X_{\overline{\eta}_k}\}=\{\pi(X_{\we_1}), \pi(X_{\we_2}),\ldots, \pi(X_{\we_n})\}$ generate a quotient heart $\h_{\uA}=\h_{\AS}/\h_{\AS}\cap\pvs$ of $\pvq$.
\end{proof}
% By \Cref{prop:objcor}, we have that the quotient heart of $\pvq$ is independent of the choice of refinements.
\begin{theorem} \label{thm:EGiso}
There is an isomorphism
\begin{equation}
\EGb(\colwsur)\cong \EGb(\pvq)
\end{equation}
of the principal parts of corresponding exchange graphs, which is compatible with flips and simple tilting. In particular, $\EGb(\pvq)$ is a union of connected components of
$\EG(\pvq)$.
\end{theorem}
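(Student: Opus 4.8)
The plan is to construct the isomorphism explicitly on vertices using \Cref{prop:vertex} and \Cref{prop:objcor}, and then to verify that it matches the two edge structures by transporting the combinatorics of \Cref{prop:refine_of_flip} through the Verdier quotient \eqref{eq:ses.pvds}.

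\textbf{Vertices.} I would send a mixed-angulation $\uA\in\EGb(\colwsur)$, with dual set $\uA^{*}=\{\overline{\eta}_1,\dots,\overline{\eta}_m\}$, to the heart $\h_{\uA}$ of $\pvq$ generated by the simples $\{X_{\overline{\eta}_1},\dots,X_{\overline{\eta}_m}\}$; by \Cref{prop:vertex} this is a heart of quotient type, so the assignment $\Phi$ is well defined. It is injective: the dual-graph construction $\uA\mapsto\uA^{*}$ is a bijection, so $\uA\neq\uA'$ forces the two dual sets, hence (by the injectivity of the arc-to-object map of \Cref{prop:objcor}) the two sets of simples, to differ, and a finite-length heart is determined by its set of simple objects. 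It is surjective: a heart of quotient type is by construction $\h=\h_{\AS}/(\h_{\AS}\cap\pvs)$ for some mixed-angulation $\AS\in\EGp(\sow)$; since $\h_{\AS}\cap\pvs$ is a Serre subcategory of the finite-length heart $\h_{\AS}$, it is the extension closure of a subset $\{X_{\we_{m+1}},\dots,X_{\we_n}\}$ of $\Sim\h_{\AS}$, and forgetting the corresponding open arcs $\wg_{m+1},\dots,\wg_n$, which form a mixed-angulation of $\subsur$, exhibits $\AS$ as a refinement of a mixed-angulation $\uA\in\EGb(\colwsur)$ with $\Phi(\uA)=\h$.

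\textbf{Flips correspond to simple tiltings.} Let $\uA\xrightarrow{\overline{\gamma}}\uA'$ be a forward flip in $\EGb(\colwsur)$, and let $\overline{\eta}\in\uA^{*}$ be the closed arc dual to $\overline{\gamma}$. By \Cref{prop:refine_of_flip} choose a refinement $\AS\supseteq\uA$ and a chain of forward flips $\AS\xrightarrow{\wg}\AS_1\to\cdots\to\AS_r=\AS'$ in $\EGp(\sow)$ with $\AS'\supseteq\uA'$ and $\wg=\cols^{-1}(\overline{\gamma})$; by \Cref{cor:ass} this chain is carried to a chain of simple forward tiltings $\h_{\AS}\to\h_{\AS_1}\to\cdots\to\h_{\AS'}$ in $\pv$, the first performed at the simple $X_{\we}$, where $\we$ is the closed arc dual to $\wg$. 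Now apply the exact quotient functor $\pi\colon\pv\to\pvq$. Two facts govern the image. First, if $S$ is a simple of a heart $\h$ of $\pv$ with $S\in\pvs$, then $\langle S[1]\rangle\subseteq\pvs$, so $\pi(S[1])=0$ and, by the formula of \Cref{propdef} together with exactness of $\pi$, one gets $\pi(\pss(X))=\pi(X)$ for every other simple $X$; hence the quotient heart is unchanged by tilting at $S$. Second, if $\pi(S)\neq0$ then, by the inherited bounded t-structure of \cite{AGH} (Proposition 2.20), $\pi$ carries the forward tilt of $\h$ at $S$ to the forward tilt of $\h/(\h\cap\pvs)$ at $\pi(S)$. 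Since $\pi(X_{\we})=X_{\overline{\eta}}\neq0$, combining these two facts along the chain shows that the projected chain of hearts in $\pvq$ starts at $\h_{\uA}=\pi(\h_{\AS})$, ends at $\h_{\uA'}=\pi(\h_{\AS'})$, and has net effect the single forward simple tilting at $X_{\overline{\eta}}$, i.e.\ $\h_{\uA'}=(\h_{\uA})^{\sharp}_{X_{\overline{\eta}}}$. The argument for backward flips is identical. Thus $\Phi$ sends forward (resp. backward) flip-edges to forward (resp. backward) simple-tilting-edges.

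\textbf{Conclusion.} The graph $\EGb(\colwsur)$ is $(m,m)$-regular by the corollary following \Cref{prop:refine_of_flip}, and each $\h_{\uA}$ has exactly $m$ simple objects, hence exactly $m$ forward and $m$ backward simple tiltings by \Cref{propdef}. As $\Phi$ is a bijection on vertices that sends the $m$ forward (resp. backward) flip-edges out of $\uA$ to $m$ pairwise distinct forward (resp. backward) simple-tilting-edges out of $\h_{\uA}$ — pairwise distinct because their targets are distinct by the injectivity of $\Phi$ — it realises all forward, and all backward, tilting-edges at every vertex. Therefore $\Phi$ is an isomorphism of directed graphs, compatible with flips and simple tilting; in particular $\EGb(\pvq)$ is itself $(m,m)$-regular, and since every edge of $\EG(\pvq)$ incident to a vertex of $\EGb(\pvq)$ is accounted for inside $\EGb(\pvq)$, the latter is a union of connected components of $\EG(\pvq)$.

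\textbf{Expected main obstacle.} The delicate point is the claim in the second step that, after applying $\pi$, the lifted chain of simple tiltings has net effect a single forward tilting of $\h_{\uA}$. When $\overline{\gamma}$ is non-degenerate the lift of \Cref{prop:refine_of_flip} is a single flip and this is immediate; the real work is to track, case by case through the four types of refinement in \Cref{prop:refine_of_flip} — especially the monogon case $N=1$ corresponding to the newly allowed simple poles — the simples at which the intermediate tiltings are performed, and to check that under $\pi$ all of them either become zero or cancel, leaving precisely the tilt at $X_{\overline{\eta}}$. This, together with establishing the two compatibility statements for $\pi$ and simple tilting, is where the geometry of collapsing subsurfaces genuinely enters.
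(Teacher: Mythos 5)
Your vertex correspondence and the overall strategy (lift a flip via \Cref{prop:refine_of_flip}, transport through \eqref{EGpvdEG}, push down along $\pi$) match the paper, but the mechanism you propose for the edge correspondence has a genuine gap, and it is exactly the point you defer in your ``expected main obstacle'' paragraph. Your dichotomy --- either the simple being tilted lies in $\pvs$, so the quotient heart is unchanged, or its image is nonzero, so the tilt descends by \cite{AGH} to a tilt of the quotient heart --- does not describe what happens along the lifted chains of \Cref{prop:refine_of_flip}. In type II, for instance, the first tilt is at $S$ with $\pi(S)=\overline{S}\neq 0$, and the subsequent tilts are not at the original simples $S_1,\dots,S_m\in\pvs$ but at the new simples $S_1'=\pss(S_1),S_2',\dots$ of the tilted hearts, which are extensions of objects of $\pvs$ by $S$; hence they are not in $\pvs$ and satisfy $\pi(S_i')\cong\overline{S}\neq 0$, so your first fact never applies after the initial step. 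Your second fact does not apply either: the intermediate hearts are not of quotient type (already after the first tilt only $m-1$ of the simples lie in $\pvs$, and after the second step the images of the simples contain both $\overline{S}$ and $\overline{S}[1]$, which cannot be simples of one heart), so there is no intermediate quotient heart for the tilt to descend to, and $\pi(S_i')\cong\overline{S}$ is in any case not a simple of the would-be quotient. Thus the claim that the projected chain ``has net effect the single forward tilting at $X_{\overline{\eta}}$'' is asserted but not proved by your two facts.

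The paper avoids stepwise descent altogether: it compares only the two endpoints $\h$ and $\h'$ of the lifted chain and verifies the three conditions of Remark 3.3 in \cite{KQ1} for the pair of quotient hearts, namely that $\pvd(\subsur)\cap\h'$ is a Serre subcategory of $\h'$, that $\overline{\h}\leq\overline{\h}'\leq\overline{\h}[1]$, and that $\langle\overline{S}\rangle=\overline{\h}'[-1]\cap\overline{\h}$; type I is quoted from Theorem 5.9 of \cite{BMQS}, while types II--IV are handled by explicitly tracking how the simples transform along the chain (the tables in the appendix), showing that all intermediate modifications affect only factors in $\pvd(\subsur)$ or produce objects whose image is $\overline{S}[1]$ or $\phi_{\overline{S}}$ of the old simples. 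To repair your argument you would need to supply precisely this composite verification (or an equivalent case analysis for the four refinement types, including the monogon/simple-pole case), rather than the two local descent facts; the rest of your proposal --- injectivity and surjectivity on vertices, the reverse direction for edges, and the $(m,m)$-regularity argument for the ``union of connected components'' statement --- is in line with the paper.
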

\begin{proof}
We construct a map $\phi:\EGb(\colwsur)\to\EGb(\pvq)$ as follows. As for vertices, by \Cref{prop:vertex}, we get a well-defined map $\phi$ sending a mixed angulation $\uA$ in $\EGb(\colwsur)$, whose dual is $\SS=\{\overline{\eta}_1, \overline{\eta}_2,\ldots, \overline{\eta}_k\}$, to the heart in $\pvq$, which is generated by the set of simples $\{X_{\overline{\eta}_1},X_{\overline{\eta}_2},\ldots,X_{\overline{\eta}_k}\}$. The injective part of $\phi$ follows from the injectivity of the arc-to-object correspondence in \Cref{prop:objcor} and the surjectivity part follows from \eqref{EGpvdEG} in \Cref{cor:ass}. %if $\ol{\h(\AS)}=\ol{\h(\AS')}$, then there exists a series of simple tiltings connecting $\h(\AS)$ and $\h(\AS')$ by definition. Hence there is a series flips connecting $\AS$ and $\AS'$ by \eqref{EGpvdEG} in assumption. When forgetting the open arcs in $\subsur$, $\AS$ and $\AS'$ coincide because they induce the same quotient heart. 

We now consider the edges. For any forward flip $\uA\stackrel{\overline{\gamma}}{\to}\uAgs=\uA'$ in $\EGb(\colwsur)$, by \Cref{prop:refine_of_flip}, we can lift it to a finite sequence forward flips
\[\AS\xrightarrow{\wg}\A_{\wg}^{\sharp}\to\cdots\to\AS'\] in $\EGp(\sow)$, where $\AS$ and $\AS'$ are refinements of $\uA$ and $\uA'$ respectively, and $\wg$ is a graded open arc in $\sow$ whose image is $\overline{\gamma}$ under the collapse operation. By \eqref{EGpvdEG}, the series gives a series of corresponding simple tiltings
\begin{equation}\label{eq:heart}
\h\xrightarrow{S}\h_{S}^{\sharp}\to\cdots\to\h',
\end{equation}
where $\h$ and $\h'$ are the corresponding hearts of $\A$ and $\A'$ respectively, and $S$ is the object associated to the dual arc $\we$ of $\wg$. We want to show that \eqref{eq:heart} gives a simple tilting of quotient heart in $\pvd(\colwsur)$:
\[\overline{\h}\xrightarrow{\overline{S}}\overline{\h}'.\]
%Let $X\in\pvd(\sow)\backslash\pvd(\subsur)$ be the simple object in $\h$ satisfying that $\Ext^1(X, S)\neq 0$, which induces $\Ext^1(\overline{X}, \overline{S})\neq 0$. 
By Remark 3.3 in \cite{KQ1}, it suffices to verify the following three things of for the simple tilting (forward flip) with respect to types I–IV:
\begin{enumerate}
\item $\pvd(\subsur)\cap\h'$ is a Serre subcategory of $\h'$, 
\item $\overline{\h}\leq \overline{\h}' \leq \overline{\h}[1]$,
\item $\langle\overline{S}\rangle = \overline{\h}'[-1] \cap \overline{\h}$.
\end{enumerate}
For type I, it is shown in Theorem 5.9 in \cite{BMQS}. For type II to IV, we consider the given finite sequence of simple tiltings. By arc-to-object correspondence in \Cref{prop:CHQ}, $\pvd(\subsur)\cap\h'$ is a heart of $\pvd(\subsur)$ generated by simples in $\pvd(\subsur)$, which is associated to those graded open arcs inside the monogon bounded by the bigon or $\wg$. Hence, (1) follows. Furthermore, at each step, by \Cref{prop:refine_of_flip} and \Cref{rmk:ind}, the tilting procedure only at the simple whose image in $\pvd(\colwsur)$ is isomorphic to $\overline{S}$. Thus for any object $X\in\h$, if it has a filtration with factors $S$ and other objects in $\pvd(\subsur)$ or in $\pvd(\sow)\backslash\pvd(\subsur)$, then under this process, the factors in $\pvd(\sow)\backslash\pvd(\subsur)$ will not change and $S$ becomes to $Y$, whose image in $\pvd(\colwsur)$ is $\overline{S}[1]$. Thus (2) and (3) follow. Thus we prove that the series in \eqref{eq:heart} consisting of simple tiltings induce a quotient simple tilting $\overline{\h}\xrightarrow{\overline{S}}\overline{\h}'$, and this is indeed an edge in $\EGb(\pvq)$. We also illustrate how the simples of type~II and type~III change under this finite sequence of simple tiltings in \Cref{sec:B}. The case of type~IV is analogous. 

Conversely, every edge in $\EGb(\pvq)$ arises by definition from a series of simple tiltings in $\EGp(\pv)$, which corresponds to a series of flips in between mixed-angulations $\EGp(\sow)$ by \eqref{EGpvdEG}. This gives rise by definition to an edge in $\EGb(\colwsur)$. Thus $\phi$ is indeed an isomorphism.
\end{proof}
\begin{remark}
We generalize Theorem 5.9 in \cite{BMQS}, where decorations of weight $-1$ are not treated. Decorations of weight $-1$ correspond to quadratic differentials with simple poles. In \Cref{sec:5}, we will prove the Bridgeland–Smith correspondence for quadratic differentials with zeros and poles of arbitrary order as a corollary. It can be viewed as a contribution to the compactification of the space of stability conditions, where the boundary strata correspond to degenerations of quadratic differentials with higher-order singularities.
\end{remark}
%=========================================================
\section{Categories via perverse schobers}\label{sec:4}
%=========================================================
In this section, we consider the case where the category associated to a surface arises via a perverse schober parametrized by the surface’s spanning graph, which is a ribbon graph. We construct the quotient perverse schober such that its corresponding category of global section is the Verdier quotient obtained via the collapse process, providing a variation of \Cref{thm:EGiso}.
%=========================================================
\subsection{Background on perverse schobers}\label{sec:4.1}
%=========================================================
We first summarize the basic perliminaries on perverse schober in this section. The preprint \cite{KS} by Kaparanov-Schecthman in 2014 gives a categorification of a perverse sheaf on disk with a single singularity and later Merlin Christ generalizes it to oriented marked surfaces with ribbon graphs, cf. \cite{Cginzburg,CSpherical}. We refer to \cite{KL, L} for fundamental concepts on $\infty$-categories and \cite{CHQ,Crelative} for perverse schobers. To begin with, we mainly follow the notations below in \cite{Cginzburg}:
\begin{itemize}
\item $\Cat_{\infty}$ denotes the $\infty$-category of (small) $\infty$-categories.
\item $\St$ denotes the subcategory of $\Cat_{\infty}$ with stable $\infty$-categories and exact functors.
\item $\Prl$ (resp. $\Prr$) denotes the subcategory of $\Cat_{\infty}$ with presentable and stable $\infty$-categories and left (resp. right) adjoint functors.
\end{itemize} 
Notice that there is an equivalence of $\infty$-categories between $\Prl$ and $(\Prr)^{op}$ via taking the adjoint.
%=========================================================
\paragraph{\textbf{Perverse schobers}}
%=========================================================
For simplicity, we let $\k$ be a field. For an integer $n$, an \emph{$n$-spider} is the ribbon graph with one vertex $v$ and $n$ adjacent external edges.
\begin{definition}\label{def:schberngon}
A \emph{perverse schober on the $n$-spider} is defined as follows.
\begin{itemize}
\item If $n=1$, it is given by a $\k$-linear \emph{spherical adjunction} between $\k$-linear stable $\infty$-categories
\[F\colon \hh{V}\rightleftarrows \hh{N}\cocolon G,\]
satisfying that $T_{\hh{V}}\in\Fun(\hh{V},\hh{V})$, the cofiber of the unit $\on{id}_{\hh{V}}\to G\circ F$ and $T_{\hh{N}}\in\Fun(\hh{N},\hh{N})$, the fiber of the counit $F\circ G\to \on{id}_{\hh{N}}$ are equivalences. We call $F$ and $G$ \emph{spherical functors}, $T_{\hh{V}}$ \emph{twist functor} and $T_{\hh{N}}$ \emph{cotwist functor}.
\item If $n\geq2$, it is given by
a collection of $\k$-linear adjunctions
\[ (F_i\colon \hh{V}^n\rightleftarrows \hh{N}_i\cocolon G_i)_{i\in \ZZ/n}\]
between $\k$-linear stable $\infty$-categories, if
\begin{enumerate}
    \item The counit $F_i\circ G_i\to \on{id}_{\hh{N}_i}$ is an equivalence,
    \item $F_{i}\circ G_{i+1}$ is an equivalence of $\infty$-categories,
    \item $F_i\circ G_j\simeq 0$ if $j$ does not equal to $i$ or $i+1$,
    \item $G_i$ has a right adjoint $(G_i)^R$ and $F_i$ has a left adjoint $^LF_i$ and
    \item $\im(G_{i+1})=\im(^LF_i)$ as full subcategories of $\hh{V}^n$.
\end{enumerate}
\end{itemize}
\end{definition}
\begin{definition}
For a ribbon graph $\bG$, the \textit{exit path category} $\on{Exit}(\bG)$ is the $1$-category whose objects are the vertices and edges of $\bG$. For each half-edge $a$ of $\bG$,  which belongs to an edge $e$ incident to a vertex $v$ of $\bG$, there is a morphism $v\xrightarrow{a} e$. All other morphisms act as identities.
\end{definition}
Consider a vertex $v$ of valency $n$ in a ribbon graph $\bG$. The undercategory $\on{Exit}(\bG)_{v/}$ consists of $n+1$ objects, corresponding to $v$ itself and its $n$ adjacent half-edges. The only non-identity morphisms in this category are those mapping $v$ to each of these half-edges.
\begin{definition}\label{def:schober}
Let $\bG$ be a ribbon graph. A functor $\hF\colon \on{Exit}(\bG)\to \on{LinCat}_{\k}$\footnote{Here $\on{LinCat}_{\k}$ is the $\infty$-category consisting of left modules over the symmetric monoidal derived $\infty$-category $\D(\k)$ in $\Prl$.} is said to be a \textit{$\bG$-parametrized perverse schober} if, for each vertex $v$ in $\bG$, its restriction to $\on{Exit}(\bG)_{v/}$ gives rise to a perverse schober on the $n$-spider. 
The \emph{generic stalk} of the functor $\F$ parametrized by $\bG$ is the stable $\infty$-category (up to equivalence) assigned to each edge of $\bG$. A vertex $v$ is called a \emph{singularity} of $\hF$ if the spherical functor is non-zero.
\end{definition}

\begin{definition}\label{def:sections}
Let $\hF$ be a $\bG$-parametrized perverse schober. We define \[\glo(\bG,\hF)\colon=\on{lim}(\hF)\] to be the limit of $\hF$ in the $\infty$-category $\on{LinCat}_k$ and call it the $\infty$-\emph{category of global sections} of $\hF$. 
\end{definition}
We have that $\glo(\bG,\hF)$ can be identified with the $\infty$-category of coCartesian sections of the covariant Grothendieck construction or Cartesian sections of the contravariant Grothendieck construction of $\hF$. The relevant concepts can be found in Definition 3.2.5.2 in \cite{L}, \cite{KL} and Section 1.2 in \cite{CSpherical}. We define $\losec(\bG,\hF)$ to be the $\infty$-category of all sections of the contravariant Grothendieck construction of $\hF$ and call it the $\infty$-\emph{category of local sections} of $\hF$. Moreover, the inclusion functor $\glo(\bG,\hF)\subset \losec(\bG,\hF)$ is fully faithful.
%=========================================================
\paragraph{\textbf{Semiorthogonal decomposition}}
%=========================================================
Let $\C$ be a stable $\infty$-category. Then its \emph{homotopy category} $\ho\C$ is a triangulated category. Two stable subcategories $\hh{A},\hh{B} \subseteq \C$ are called \emph{orthogonal} if the mapping space $\Map_{\hh{C}}(a, b)$ is contractible for all $a\in\hh{A}$ and $b\in\hh{B}$. For a stable subcategory $\hh{A} \subseteq \C$, its \emph{left} and \emph{right orthogonals} are defined as the full $\infty$-subcategories $^{\perp}\hh{A}$ and $\hh{A}^{\perp}$ consisting of objects given by:
\begin{align}
\begin{split}
^{\perp}\hh{A}&\coloneq\{c\in\C\mid\Map_{\hh{C}}(c, a)~\mbox{is contractible for all}~a~\mbox{in}~\hh{A}\},\\
\hh{A}^{\perp}&\coloneq\{c\in\C\mid\Map_{\hh{C}}(a, c)~\mbox{is contractible for all}~a~\mbox{in}~\hh{A}\}.
\end{split}
\end{align}
\begin{definition}\label{def:SOD}
Consider two full stable subcategories $\hh{A}$ and $\hh{B}$ of $\C$. The pair $(\hh{A},\hh{B})$ is said to form a \emph{semiorthogonal decomposition} (SOD for short) if
\begin{enumerate}  
\item For every $a \in \hh{A}$ and $b \in \hh{B}$, the mapping space $\Map_{\C}(b, a)$ is contractible.  
\item Each object $c \in \C$ fits into a exact sequence $b \to c \to a$ with $a \in \hh{A}$ and $b \in \hh{B}$.  
\end{enumerate}
\end{definition}
\begin{proposition}\label{prop:SOD}
We collect several results from Sections 2.2 and 2.3 in \cite{DKSS}.  
\begin{enumerate}  
\item Let $\hh{A}$ and $\hh{B}$ be strictly full stable $\infty$-subcategories of $\C$.  
If $(\hh{A},\hh{B})$ is an SOD of $\C$, then  
$$ \hh{B} = {}^{\perp} \hh{A} \quad\mbox{and}\quad \hh{A} = \hh{B}^{\perp}. $$  
\hfill (Prop. 2.2.4)  

\item A pair $(\hh{A}, \hh{B})$ forms an SOD of $\C$ if and only if the associated pair  
of full triangulated subcategories $(\ho\hh{A}, \ho\hh{B})$ forms an SOD of the triangulated  
category $\ho\C$.  
\hfill (Cor. 2.2.5)  

\item Let $\hh{A}$ be a full stable subcategory of $\C$.  
The pair $(\hh{A}, {}^{\perp} \hh{A})$ (resp. $(\hh{A}^{\perp}, \hh{A})$) forms an SOD of $\C$  
if and only if the inclusion $\hh{A} \to \C$ has a left (resp. right) adjoint.  
\hfill (Prop. 2.3.2)  
\end{enumerate}  
\end{proposition}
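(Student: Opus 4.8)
The plan is to observe that all three assertions are standard facts about semiorthogonal decompositions of stable $\infty$-categories, so the most economical ``proof'' is to cite \cite[Prop.~2.2.4, Cor.~2.2.5, Prop.~2.3.2]{DKSS}. For completeness, below is how I would reprove each item directly from \Cref{def:SOD}.

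For item (1): the inclusion $\hh{B}\subseteq{}^{\perp}\hh{A}$ is immediate from the orthogonality axiom. Conversely, given $c\in{}^{\perp}\hh{A}$, I would take its decomposition sequence $b\to c\to a$ with $a\in\hh{A}$ and $b\in\hh{B}$; since $\Map_{\C}(c,a)$ is contractible, the map $c\to a$ is nullhomotopic, so $b\simeq c\oplus a[-1]$ in the stable category $\C$. Then $a[-1]\in\hh{A}$ is a retract of $b\in\hh{B}$, so $\id_{a[-1]}$ factors through an object of $\hh{B}$, and orthogonality forces $a[-1]\simeq 0$, hence $c\simeq b\in\hh{B}$. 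The dual argument gives $\hh{A}=\hh{B}^{\perp}$. (One uses here only that stable subcategories are closed under shifts.)

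For item (3): if $i\colon\hh{A}\hookrightarrow\C$ has a left adjoint $L$ with unit $\mu\colon\id_{\C}\to iL$, I would set $b\coloneqq\fib(\mu_c)$ for $c\in\C$. The fiber sequence $b\to c\to iLc$ induces, for each $a\in\hh{A}$, a fiber sequence of mapping spectra in which $\Map_{\C}(iLc,ia)\to\Map_{\C}(c,ia)$ is an equivalence (by the adjunction together with full faithfulness of $i$); hence $\Map_{\C}(b,ia)$ is contractible and $b\in{}^{\perp}\hh{A}$, so $b\to c\to iLc$ exhibits a decomposition. Conversely, if $(\hh{A},{}^{\perp}\hh{A})$ is an SOD, the decomposition of each $c$ is unique up to contractible choice, hence functorial, and $c\mapsto a$ is left adjoint to $i$. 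The statement for $\hh{A}^{\perp}$ is the op-dual.

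For item (2): orthogonality transfers both ways because $\Map_{\C}(x,y)$ is the underlying space of a mapping spectrum, so it is contractible for all shifts of $x$ and $y$ inside stable subcategories exactly when $\Hom_{\ho\C}(x,y[n])=0$ for all $n\in\ZZ$, which is the triangulated orthogonality condition. Any $\infty$-categorical fiber sequence $b\to c\to a$ becomes a distinguished triangle in $\ho\C$, giving one direction of the decomposition axiom. For the converse one must lift the triangulated truncation triangles back to fiber sequences in $\C$; this is precisely item (3) applied at the level of $\C$, the point being that $\ho$ reflects the mapping-space equivalence that characterizes the truncation adjoint. I expect this last lifting step to be the only genuine obstacle; everything else is bookkeeping with fiber and cofiber sequences and the $\St$-enrichment of $\C$.
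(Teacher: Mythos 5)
Your proposal takes the same route as the paper: the paper gives no proof of this proposition at all, simply citing Proposition 2.2.4, Corollary 2.2.5 and Proposition 2.3.2 of \cite{DKSS}, which is also your primary move. The supplementary sketches you add (retract argument for (1), unit-fiber construction and pointwise adjoint for (3), transfer of orthogonality via mapping spectra and lifting of decomposition triangles for (2)) are the standard arguments and are essentially correct, so nothing further is needed.
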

\begin{definition}\label{def:SODschobers}
Let ${\bf G}$ be a ribbon graph. A \emph{semiorthogonal decomposition} $\{\F_1,\F_2\}$ of a ${\bf G}$-parametrized perverse schober $\hh{G}$ consists of the following data:  
\begin{enumerate}  
\item an inclusion $\alpha_1\colon\F_1\to \hh{G}$ in $\Fun(\Exit(\bG),\Prr)$,
\item an inclusion $\alpha_2\colon\F_2\to \hh{G}$ in $\Fun(\Exit(\bG),\Prl)$,
\item a condition that $\alpha_2$ is right admissible, and
\item a requirement that the cofiber of $\alpha_2$ in $\on{Fun}(\on{Exit}({\bf G}),\Prl)$ is $\F_1$, with cofiber morphism $\beta\colon \hh{G}\to \F_1$ being pointwise left adjoint to $\alpha_1$, ensuring that $\alpha_1$ is left admissible.  
\end{enumerate}  
For the definitions of inclusion and right/left admissible morphisms for perverse schobers, we refer to Definition 3.14 and Remark 3.15 in \cite{Ccluster}.
\end{definition}
%=========================================================
\subsection{Categorical realization of collapsing via quotient perverse schobers}\label{sec:P2}
%=========================================================
Let $\bG$ be a ribbon graph and $\Gs$ be a subgraph of $\bG$, where the edges connecting the vertices in $\Gs$ are completely contained in $\Gs$. Assume further that the cyclic orders at each vertex are compatible under the inclusion $\Gs\hookrightarrow\bG$. Similarly, we have the collapse action on the graph, see \Cref{fig:collapsedgraph} as an example.
\begin{definition}\label{def:collapseofgraph}
The collapsed graph $\Gq$ is obtained by contracting all the vertices of $\Gs$ into a single vertex $\vq$, along the edges in $\Gs$.
\end{definition}

Let $\F$ be a $\bG$-parametrized perverse schober, and let $\Fs$ be a $\Gs$-parametrized perverse schober obtained via restriction of $\F$. Specifically, locally at each spider of $\Gs$, the spherical adjunction of $\Fs$ coincides with the spherical adjunction of $\F$. Our aim is to construct a natural perverse schober $\Fquo$ that is parametrized by $\Gq$. This construction is carried out inductively.
%=========================================================
\paragraph{\textbf{Base case: when $\Gs$ has two vertices}}
%=========================================================
The first step of induction is to prove the following base case:
\begin{itemize}
\item The subgraph $\Gs$ consists of two vertices, namely $v$ and $v'$, which are also vertices of $\bG$.
\item There are several edges in $\Gs$ connecting $v$ and $v'$.
\end{itemize}
At $v$, let the set of half-edges in $\bG$ be $E\coloneq\{e_i \mid i\in I\}$ and that in $\Gs$ be $\Es\coloneq\{e_i\mid i\in\Is\}$, where we have $\Is\subseteq I$ and $\Es\subseteq E$. Similarly, at $v'$, let the set of half-edges in $\bG$ be $\{e'_i\mid i\in I'\}$ and that in $\Gs$ be $\{e'_i\mid i\in\Is'\}$, where we have $\Is'\subseteq I'$ and $\Es'\subseteq E'$. The bijection between $\Es$ and $\Es'$ induces the following bijection:
\[\psi:\Is\to\Is',\]
where $e_{k}$ and $e'_{\psi(k)}$ are connected by an edge in $\Gs$.

By the construction of the collapse, we denote the new vertex in $\Gq$ by $\vq$ and its half-edges by $\Eq\coloneq\{\eq_i\mid i\in\Iq\}$, where there is a natural cyclic order on it. Thus, the bijection between $\Eq$ and $E\cup E'-(\Es\cup\Es')$ gives a map as follows:
\begin{equation}
\phi:\Iq\to I\backslash\Is\cup I'\backslash\Is',
\end{equation}
where $\eq_i$ corresponds to $e_{\phi(i)}$ if $\phi(i)\in I\backslash\Is$, or corresponds to $e'_{\phi(i)}$ if $\phi(i)\in I'\backslash\Is'$ under the collapse construction.
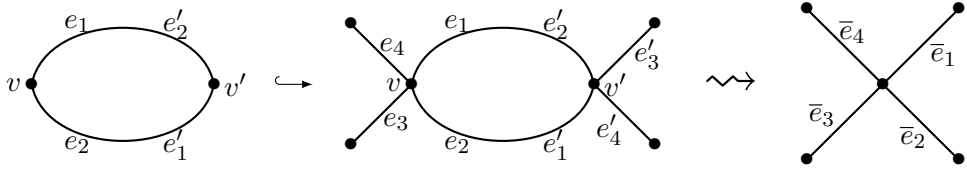
\begin{figure}
\begin{tikzpicture}[scale=1]
  % 向上弯的曲线：两个控制点
  \draw[thick] (-1.2,0) .. controls (-1,1) and (1,1) .. (1.2,0);
  % 向下弯的曲线：两个控制点
  \draw[thick] (-1.2,0) .. controls (-1,-1) and (1,-1) .. (1.2,0);
\draw[thick] (1.2,0)\nn(-1.2,0)\nn;
\draw (-1.2,0)node[left]{$v$} (1.2,0)node[right]{$v'$} (-.6,.8)node{$e_1$}(-.6,-.8)node{$e_2$} (.7,.8)node{$e_2'$} (.7,-.8)node{$e_1'$};
\begin{scope}[shift={(5,0)}]
 \draw[thick] (-1.2,0) .. controls (-1,1) and (1,1) .. (1.2,0);
  % 向下弯的曲线：两个控制点
  \draw[thick] (-1.2,0) .. controls (-1,-1) and (1,-1) .. (1.2,0);
  \draw[thick] (-1.2,0) -- (-2,.8) (-1.2,0) -- (-2,-.8) (1.2,0) -- (2,.8) (1.2,0) -- (2,-.8);
\draw[thick] (1.2,0)\nn(-1.2,0)\nn;
\draw[thick] (-2,.8)\nn(-2,-.8)\nn(2,.8)\nn(2,-.8)\nn;
\draw(-1.45,.5)node{$e_4$}(-1.4,-.5)node{$e_3$}(1.9,.4)node{$e_3'$}(1.4,-.6)node{$e_4'$};
\draw (-1.2,0)node[left]{$v$} (1.2,0)node[right]{$v'$} (-.6,.8)node{$e_1$}(-.6,-.8)node{$e_2$} (.7,.8)node{$e_2'$} (.7,-.8)node{$e_1'$};
\end{scope}
\begin{scope}[shift={(10,0)}]
\draw[thick] (0,0)--(-1,1) (0,0)--(-1,-1) (0,0)--(1,1) (0,0)--(1,-1);
\draw[thick] (0,0)\nn(-1,1)\nn(-1,-1)\nn(1,1)\nn(1,-1)\nn;
\draw (.8,.4)node{$\overline{e}_1$} (.4,-.7)node{$\overline{e}_2$} (-.8,-.4)node{$\overline{e}_3$} (-.4,.7)node{$\overline{e}_4$};
\end{scope}
\draw(8,0)node{\huge{$\rightsquigarrow$}};
\draw[right hook-latex,>=stealth](2,0)to(2.5,0);
\end{tikzpicture}
    \caption{An example of collapsed graph}
    \label{fig:collapsedgraph}
\end{figure}
%=========================================================
\paragraph{\textbf{Base case on perverse schobers}}
%=========================================================
If we denote $|I|=n$, $|I'|=n'$, $|\Is|=|\Is'|=m$, then we have $|\Iq|=N=n+n'-2m$. Let the perverse schobers on the $n$-spider $v$ and the $m$-spider $v'$ be associated with the collections of $\k$-linear adjunctions between $\k$-linear stable $\infty$-categories as
\[ (F_i\colon \hh{V}\rightleftarrows \hh{N}_i\cocolon G_i)_{i\in \ZZ/n}\]
and
\[ (F_i'\colon \hh{V}'\rightleftarrows \hh{N}_i\cocolon G_i')_{i\in \ZZ/n'}\]
respectively, where all $\hh{N}_i$'s are equivalent, denoted by $\hh{N}$. We consider taking the limit locally in a perverse schober. Specifically, we examine the subdiagram $\F'$ (cf. the part in \Cref{fig:computelimF'}) of $\hF$, which corresponds to the structure of $\Fs$. The local limit of $\F'$ is computed and assigned to $\vq$, while the parts of $\hF$ that do not involve $\F'$ remain unchanged. See the \Cref{fig:computelimF'} for reference.
\begin{figure}
\begin{tikzcd}
       &                                             & \textcolor{blue}{\lim\hh{F}'} \arrow[ld,blue,"\res_{\hh{V}}"'] \arrow[rd,blue, "\res_{\hh{V'}}"] &                                               &        &                      &    &                                                                      &        \\
       & \textcolor{red}{\hh{V} } \arrow[ld, "F_2"'] \arrow[rd,red, "F_1"] &                            & \textcolor{red}{\hh{V}'} \arrow[ld, red,"F_2'"'] \arrow[rd, "F_1'"] &        & {\rightsquigarrow}  & {} &       \textcolor{blue}{\lim\hh{F}'} \arrow[ld, "\overline{F}_2"'] \arrow[rd, "\overline{F}_1"] &        \\
\hh{N} &                                             & \textcolor{red}{\hh{N}}                     &                                               & \hh{N} &                        & \hh{N} &                                                                 & \hh{N}
\end{tikzcd}
    \caption{The computation of $\lim\F'$, where $\F'$ is the diagram marked in red}
    \label{fig:computelimF'}
\end{figure}
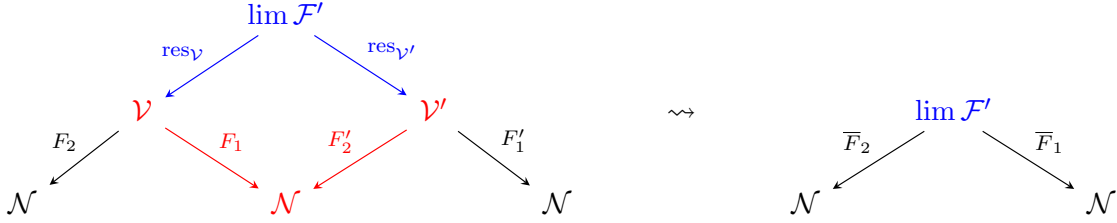
We consider the Cartesian contravariant Grothendieck construction $p:\chi(\F')\to N(\Exit(\Gs))$ of $\F'$ over the nerve $N(\Exit(\Gs))$ whose fibers are equivalent to the values of $\hF$. We denote $\hh{L}$ the $\infty$-category of sections of $p$. Then, $\lim\F'$ can be identified with the full subcategory of $\hh{L}$ spanned by Cartesian sections of $p$. 

\begin{construction}\label{const:collapadj}
We aim to construct the following $\k$-linear adjunctions between $\k$-linear stable $\infty$-categories:
\begin{equation}\label{eq:perv}
(\overline{F}_i\colon \lim\F'\rightleftarrows \hh{N}\cocolon \overline{G}_i)_{i\in \ZZ/N},
\end{equation}
using the strategy similar to Construction 5.5.3 in \cite{CDW}. Since that $\eq_i$'s are obtained from $e_{\phi(i)}$ or $e_{\phi(i)}'$, we define $\overline{F}_i$ as
\[\overline{F}_i=  \begin{cases} F_{\phi(i)}\circ \res_{\hh{V}} & \mbox{if}~\eq_i~\mbox{is from}~e_{\phi(i)},\phi(i)\in I\backslash\Is,\\ F_{\phi(i)}'\circ \res_{\hh{V'}} & \mbox{if}~\eq_i~\mbox{is from}~e'_{\phi(i)}, \phi(i)\in I'\backslash\Is'.\end{cases}\]
We construct $\overline{G}_i$'s as follows. We define $\hh{E}_{\hh{V}}, \hh{E}_k$ and $\hh{E}_{\hh{V'}}$ the full subcategory of $\hh{L}$ spanned by $p$-relative right Kan extension of their restrictions to $\hh{V}, \hh{N}_k$ and $\hh{V'}$ respectively, where $k\in\Is$. By Proposition 4.3.2.15 in \cite{L}, we deduce that the restrictions
\begin{equation}\label{eq:res}
\hh{E}_{\hh{V}}\xrightarrow{\res_{\hh{V}}}\hh{V},\quad \hh{E}_{k}\xrightarrow{\res_{\hh{N}_k}}\hh{N},~~ \mbox{and} ~~\hh{E}_{\hh{V'}}\xrightarrow{\res_{\hh{V'}}}\hh{V'}    
\end{equation}
are trivial fibrations. We take their sections and by passing to opposite $\infty$-category of Proposition 4.3.2.17 in \cite{L}, we obtain that these sections are the corresponding right adjunctions. For each $i\in N$ with $\phi(i)\in I\backslash\Is$, we define $H_{\hh{V}}$ by composing $G_{\phi(i)}$, the section of $\hh{E}_{\hh{V}}\xrightarrow{\res_{\hh{V}}}\hh{V}$ and inclusion as:
\[H_{\hh{V}}: \hh{N}\xrightarrow{G_{\phi(i)}}\hh{V}\to\hh{E}_{\hh{V}}\subseteq\hh{L}.\]
We define
\[H_{\hh{N}_k}:\hh{N}\to\hh{E}_{k}\subseteq\hh{L}\]
by composing the section of $\hh{E}_{k}\xrightarrow{\res_{\hh{N}_k}}\hh{N}$ with inclusion. We define $H_{\hh{V'},k}$ to be the composition of $G_{\psi(k)}'$, the section of $\hh{E}_{\hh{V'}}\xrightarrow{\res_{\hh{V'}}}\hh{V'}$ and inclusion as:
\[H_{\hh{V'},k}: \hh{N}\xrightarrow{G_{\psi(k)}'}\hh{V'}\to\hh{E}_{\hh{V'}}\subseteq\hh{L}.\]
For each $k\in \Is$, we have the following natural transformations
\begin{equation}\label{eq:nat1}
\eta_k: H_{\hh{N}_k}\circ F_k\circ G_{\phi(i)}\to H_{\hh{V}}
\end{equation}
%\[\eta_k: H_{\hh{N}_k}\circ F_k\circ G_{\phi(i)}\simeq H_{\hh{N}_k}\circ\res_{\hh{N}_k}\circ H_{\hh{V}}\to H_{\hh{V}},\]
and
\begin{equation}\label{eq:nat2}
\eta'_k:H_{\hh{N}_k}\simeq H_{\hh{N}_k}\circ F_{\psi(k)}'\circ G_{\psi(k)}'\to H_{\hh{V}',k},
\end{equation}
where the equivalence is from the first condition of a perverse schober at $v'$ in \Cref{def:schberngon}. We give the explicit expressions of $\eta_k$ and $\eta_k'$ in the \Cref{sec:A1}.
%\[\eta'_k:H_{\hh{N}_k}\simeq H_{\hh{N}_k}\circ F’_{\psi(k)}\circ G'_{\psi(k)}\simeq H_{\hh{N}_k}\circ\res_{\hh{N}_k}\circ H_{\hh{V}',k}\to H_{\hh{V}',k},\].
For those $k\in\Is$ such that $k+1\in\Is$, we have $\psi(k)\in \Is'$ and $\psi(k+1)=\psi(k)-1\in \Is'$. Then we have the equivalences $F_{k}\circ G_{k+1}$ and $F'_{\psi(k+1)}\circ G'_{\psi(k1)}$, which imply the following natural transformation
\begin{align}
\begin{split}
\sigma_k: H_{\hh{V}',k}\circ F_k\circ G_{\phi(i)}&\to H_{\hh{V}',k}\circ F_k\circ G_{k+1}\circ F_{k+1}\circ G_{\phi(i)}\\
&\to H_{\hh{V}',k+1}\circ F'_{\psi(k+1)}\circ G'_{\psi(k)}\circ F_k \circ G_{k+1}\circ F_{k+1}\circ G_{\phi(i)}\\
&\simeq H_{\hh{V}',k+1}\circ F_{k+1}\circ G_{\phi(i)},
\end{split}
\end{align}
where the maps are given by the units of the corresponding adjunctions. Therefore, we define $\overline{G}_i:\hh{N}\to \hh{L}$ as the colimit of the following diagram
\begin{equation}\label{eq:lim}
\begin{tikzpicture}[>=latex,scale=1.2]
    % Nodes
    \node (A1) at (-3,2) {$H_{\hh{V}}$};
    \node (A2) at (0,4) {$H_{\hh{N}_1}\circ F_1\circ G_{\phi(i)}$};
    \node (A3) at (0,2) {$H_{\hh{N}_2}\circ F_2\circ G_{\phi(i)}$};
    \node (A4) at (6.5,2) {$H_{\hh{V}',m}\circ F_m\circ G_{\phi(i)}.$};
    
    \node (B1) at (0,1) {$\vdots$};  % Dots for continuation
    
    \node (C1) at (0,0) {$H_{\hh{N}_m}\circ F_m\circ G_{\phi(i)}$};

    % Arrows
    \draw[->,font=\scriptsize] (A2) -- (A1) node[midway,above] {$\eta_1$};
    \draw[->,font=\scriptsize] (A2) -- (A4) node[midway,above,sloped] 
        {$\sigma_{m-1} \circ \ldots \circ \sigma_1 \circ \eta_1' \circ F_1 \circ G_{\phi(i)}$};
    
    \draw[->,font=\scriptsize] (A3) -- (A1) node[midway,above,sloped] {$\eta_2$};
    \draw[->,font=\scriptsize] (A3) -- (A4) node[midway,above,sloped] 
        {$\sigma_{m-1} \circ \ldots \circ \sigma_2 \circ \eta_2' \circ F_2 \circ G_{\phi(i)} \circ \sigma_1$};
    
    \draw[->,font=\scriptsize] (C1) -- (A1) node[midway,left] {$\eta_m$};
    \draw[->,font=\scriptsize] (C1) -- (A4) node[midway,below,sloped] {$\eta_m' \circ F_m \circ G_{\phi(i)}$};

\end{tikzpicture}
\end{equation}
We will verify in \Cref{sec:A2} that $\overline{G}_i$ factors through $\lim\F'\subseteq\hh{L}$, allowing us to regard $\overline{G}_i$ as a functor $\hh{N}\to\lim\F'$. If $i$ satisfies that $\phi(i)\in I'\backslash\Is'$, the construction is similar, where we exchanging the roles of $F$ and $F'$.
\end{construction}
\begin{lemma}\label{prop:limscho}
The $\k$-linear adjunctions
\[(\overline{F}_i\colon \lim\F'\rightleftarrows \hh{N}\cocolon \overline{G}_i)_{i\in \ZZ/N}\]
in \Cref{const:collapadj} is a perverse schober on the $N$-spider with vertex $\vq$.
\end{lemma}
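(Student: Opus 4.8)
The plan is to verify the defining conditions of a perverse schober on the $N$-spider from \Cref{def:schberngon}. I will concentrate on the case $N\ge 2$, for which it suffices to check, for all $i,j\in\ZZ/N$: that the counit $\overline{F}_i\circ\overline{G}_i\to\id_{\hh{N}}$ is an equivalence; that $\overline{F}_i\circ\overline{G}_{i+1}$ is an equivalence of $\infty$-categories; that $\overline{F}_i\circ\overline{G}_j\simeq 0$ when $j\notin\{i,i+1\}$; that $\overline{F}_i$ admits a left adjoint and $\overline{G}_i$ a right adjoint; and that $\im(\overline{G}_{i+1})=\im({}^L\overline{F}_i)$ as full subcategories of $\lim\F'$. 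The case $N=1$ is handled by the parallel verification that $(\overline{F}_1,\overline{G}_1)$ is a spherical adjunction, the twist and cotwist functors being inherited from those at $v$ and $v'$. Fix $i$ and, without loss of generality, assume $\phi(i)\in I\setminus\Is$, so that $\overline{F}_i=F_{\phi(i)}\circ\res_{\hh{V}}$; the case $\phi(i)\in I'\setminus\Is'$ is symmetric, exchanging the roles of $F$ and $F'$ as in \Cref{const:collapadj}.

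The engine of the proof is the observation that $\res_{\hh{V}}\colon\hh{L}\to\hh{V}$ and $\res_{\hh{V'}}\colon\hh{L}\to\hh{V'}$ are evaluation functors for the Cartesian contravariant Grothendieck construction $p$, hence preserve all limits and colimits, and are jointly conservative on $\lim\F'$ (a Cartesian section over the two-vertex graph $\Gs$ is determined by its values at $\hh{V}$ and $\hh{V'}$ together with the gluing data at the $\hh{N}_k$). Consequently $\overline{F}_i\circ\overline{G}_j$ is obtained by applying $F_{\phi(i)}$ to the colimit of the diagram gotten from \eqref{eq:lim} by post-composition with $\res_{\hh{V}}$. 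First I would compute this restricted diagram explicitly: since $H_{\hh{V}}$, $H_{\hh{N}_k}$ and $H_{\hh{V'},k}$ are built from sections of the trivial fibrations attached to the $p$-relative right Kan extensions $\hh{E}_{\hh{V}}$, $\hh{E}_k$, $\hh{E}_{\hh{V'}}$ (Propositions 4.3.2.15 and 4.3.2.17 in \cite{L}), and since a section of $p$ lies in $\lim\F'$ exactly when it is Cartesian, one identifies $\res_{\hh{V}}\circ H_{\hh{V}}\simeq G_{\phi(i)}$, $\res_{\hh{V}}\circ H_{\hh{N}_k}\simeq G_k$, and an analogous formula for $\res_{\hh{V}}\circ H_{\hh{V'},k}$ expressed through $F'_{\psi(k)}$, $G'_{\psi(k)}$ and the $G_l$; the transformations $\eta_k,\eta'_k,\sigma_k$ restrict to units and counits of these adjunctions. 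Feeding the result into $F_{\phi(i)}$ and invoking the perverse-schober relations at $v$ (namely $F_{\phi(i)}G_{\phi(i)}\simeq\id$, $F_{\phi(i)}G_l\simeq 0$ for $l\notin\{\phi(i),\phi(i)+1\}$, and $F_{\phi(i)}G_{\phi(i)+1}$ an equivalence) and, for the seam terms, the relations at $v'$, most vertices of the colimit diagram are killed, and the colimit collapses so as to realize the counit as an equivalence when $\eq_j=\eq_i$, an equivalence $\overline{F}_i\circ\overline{G}_{i+1}$ when $\eq_j$ immediately follows $\eq_i$ in the cyclic order on $\Eq$, and $0$ otherwise. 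For the adjoint conditions, $\overline{F}_i=F_{\phi(i)}\circ\res_{\hh{V}}$ is a composite of functors admitting left adjoints (the left Kan extension for $\res_{\hh{V}}$, working in $\Prl$ since all categories in sight are presentable, and ${}^LF_{\phi(i)}$ by the axioms at $v$), hence admits ${}^L\overline{F}_i$; dually $\overline{G}_i$, assembled from $G_\bullet$, $G'_\bullet$ and sections of trivial fibrations all of which have right adjoints, admits a right adjoint. The image identity $\im(\overline{G}_{i+1})=\im({}^L\overline{F}_i)$ is then deduced from the corresponding equalities at $v$ and $v'$ by transporting them through $\res_{\hh{V}}$ and $\res_{\hh{V'}}$ and using joint conservativity. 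Throughout, I rely on the fact that $\overline{G}_i$ takes Cartesian values and hence factors through $\lim\F'\subseteq\hh{L}$, which is exactly what is verified in \Cref{sec:A2}.

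The main obstacle I anticipate is the colimit bookkeeping underlying the equivalence and vanishing statements: one must track the three families $\eta_k$, $\eta'_k$, $\sigma_k$ through $\res_{\hh{V}}$ (or $\res_{\hh{V'}}$) and then through $F_{\phi(i)}$ (or $F'_{\phi(i)}$), and check that the perverse-schober relations at \emph{both} spiders $v$ and $v'$ conspire to collapse the colimit correctly in every combinatorial case prescribed by the position of $\eq_i$ and $\eq_j$ relative to the collapsed region. The delicate case is the seam, where $\eq_i$ and $\eq_{i+1}$ descend from different spiders; there the identity $\psi(k+1)=\psi(k)-1$ together with the built-in equivalences $F_l\circ G_{l+1}$ and $F'_{\psi(l+1)}\circ G'_{\psi(l)}$ appearing in $\sigma_l$ are precisely what produces the required equivalence $\overline{F}_i\circ\overline{G}_{i+1}$.
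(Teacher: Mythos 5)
Your overall strategy coincides with the paper's: check the five conditions of \Cref{def:schberngon} by computing $\overline{F}_i\circ\overline{G}_j$ positionwise on the colimit diagram \eqref{eq:lim}, treat the seam case separately, and construct the adjoints via relative Kan extensions. However, there is a concrete error at the heart of your computation: the identifications of the restricted sections. Since $H_{\hh{N}_k}$ (resp.\ $H_{\hh{V'},k}$) is built from the $p$-relative \emph{right} Kan extension of a value placed at the single edge $e_k$ (resp.\ at $v'$), and in the Cartesian contravariant model there are no morphisms in the relevant comma categories pointing from $v$ (or $v'$, or the other edges) to that support, these sections vanish away from their support: one has $\res_{\hh{V}}\circ H_{\hh{N}_k}\simeq 0$ and $\res_{\hh{V}}\circ H_{\hh{V'},k}\simeq 0$, exactly as the explicit diagrams in \Cref{sec:A1} show. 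Your claimed identifications $\res_{\hh{V}}\circ H_{\hh{N}_k}\simeq G_k$ and a nonzero formula for $\res_{\hh{V}}\circ H_{\hh{V'},k}$ are therefore false, and this is not harmless: with your values, the counit computation for $\overline{F}_i\circ\overline{G}_i$ acquires a genuinely nonzero term $F_{\phi(i)}\circ G_{\phi(i)+1}\circ F_{\phi(i)+1}\circ G_{\phi(i)}$ whenever $\phi(i)+1\in\Is$ (the half-edge adjacent to $\eq_i$ lies in the collapsed region), so the asserted collapse of the colimit to $\id_{\hh{N}}$ does not follow from "most vertices are killed". The paper's argument works precisely because the middle terms restrict to zero, so that (via the decomposition \Cref{lem:deplim}) the colimit degenerates to a pushout of $\id_{\hh{N}}$ against zero objects.

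Two further soft spots: the left adjoint of $\res_{\hh{V}}\colon\lim\F'\to\hh{V}$ is not simply "the left Kan extension", since the relative left Kan extension of an object placed at $v$ is a section of $\hh{L}$ that is in general not Cartesian; the paper instead builds ${}^L\overline{F}_i$ by an explicit colimit diagram in the coCartesian model, and this explicit form is then what makes condition (5) checkable. Relatedly, deducing $\im(\overline{G}_{i+1})=\im({}^L\overline{F}_i)$ "by transporting through $\res_{\hh{V}}$, $\res_{\hh{V'}}$ and joint conservativity" is too thin: conservativity detects equivalences of given maps but does not by itself produce the required objectwise matching of images, which in \Cref{sec:A3} is done by comparing the explicit local diagrams using $\im({}^LF'_{\phi(i)})=\im(G'_{\phi(i)+1})$. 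Finally, your one-line treatment of $N=1$ (sphericalness "inherited" from $v$ and $v'$) is an unproved claim rather than an argument, though the paper itself only verifies the $n\ge 2$ conditions.
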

This lemma can also be viewed as providing an alternative proof of (a special case of) Theorem 4.6 in \cite{Cct}. The proof of the lemma will be given in \Cref{sec:A3}. In general, this lemma provides a method to construct a perverse schober on a general collapsed graph, not just by collapsing a connected subgraph with two vertices. Moreover, $\Gs$ can also has external edges which are external edges in $\bG$.
\begin{proposition}\label{propp:limscho}
For a ribbbon graph $\bG$ with a connected subgraph $\Gs$ with more than two vertices, we have a natural perverse schober on the $N$-spider with vertex $\vq$ given by the $\k$-linear adjunctions
\begin{equation}\label{eq:prop}
(\overline{F}_i\colon \lim\F'\rightleftarrows \hh{N}\cocolon \overline{G}_i)_{i\in \ZZ/N},
\end{equation}
where $\vq$ is the vertex in \Cref{def:collapseofgraph} and $N$ denotes the number of half-edges at $\vq$.
\end{proposition}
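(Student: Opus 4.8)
The plan is to induct on the number $k\geq 2$ of vertices of $\Gs$. The base case $k=2$ is exactly \Cref{prop:limscho}, which in addition equips the half-edges at the new vertex with a natural cyclic order, so that collapsing a connected two-vertex subgraph of a ribbon graph again yields a ribbon graph carrying a perverse schober. For the inductive step suppose $k\geq 3$. Since $\Gs$ is connected with at least two vertices it contains an edge $e$; its endpoints $v_1,v_2$ are vertices of $\Gs$, and by the standing hypothesis the full subgraph $\Gs_{12}$ of $\Gs$ on $\{v_1,v_2\}$ (with all edges of $\Gs$ between them) is contained in $\Gs$. Apply \Cref{prop:limscho} to $\Gs_{12}\subseteq\bG$, regarding every edge from $v_1$ or $v_2$ to $v_3,\dots,v_k$ as an external half-edge of $\Gs_{12}$: this produces a ribbon graph $\bG'$, obtained from $\bG$ by contracting $\Gs_{12}$ to a single vertex $w$, together with a $\bG'$-parametrized perverse schober $\hF'$ that agrees with $\hF$ away from $w$ and, at $w$, is the datum supplied by \Cref{const:collapadj} (so in particular its underlying category at $w$ is $\lim\F'|_{\Gs_{12}}$). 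The image $\Gs'$ of $\Gs$ in $\bG'$ is then a connected subgraph with $k-1$ vertices $\{w,v_3,\dots,v_k\}$ satisfying the edge condition, so by the inductive hypothesis applied to $\Gs'\subseteq\bG'$ the collapse of $\Gs'$ gives a perverse schober on an $N'$-spider with vertex $\vq$.

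It remains to verify that $N'=N$ and that this two-step collapse reproduces the datum \eqref{eq:prop}. The equality $N'=N$ is immediate: contracting the edges of $\Gs_{12}$ — which are internal to $\Gs$ — deletes pairs of internal half-edges and merely redistributes the external half-edges of $\Gs$ onto the contracted vertex, so the number of external half-edges is preserved at every stage; hence $\vq$ has $N$ half-edges. The substantive point is the iterated-limit identity
\[
\lim\F' \;\simeq\; \lim\bigl(\hF'\big|_{\Gs'}\bigr),
\]
compatibly with the restriction functors to the various $\hh{V}$, $\hh{V}'$ at the vertices $v_1,\dots,v_k$. This is an instance of the cutting-and-gluing formalism for the limit of a perverse-schober diagram over an exit-path category: $\lim\F'$ over $\Exit(\Gs)$ is glued from the local limits over the open stars of the vertices along the edge objects, and regrouping the stars of $v_1,v_2$ together with the edges of $\Gs_{12}$ into the single star of $w$ (whose local limit is, by \Cref{const:collapadj}, $\lim\F'|_{\Gs_{12}}$) turns this gluing diagram into the one computing $\lim(\hF'|_{\Gs'})$. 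Concretely it is the same manipulation of (co)Cartesian sections and relative right Kan extensions carried out in \Cref{const:collapadj} (cf. \cite{CDW,L}).

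Granting this identity, the two perverse schobers coincide. For $i\in\ZZ/N$ the functor $\overline{F}_i$ of \eqref{eq:prop} equals $F_{\phi(i)}\circ\res$ or $F'_{\phi(i)}\circ\res$, and under the equivalence above the relevant restriction $\res$ factors through restriction to the value of $\hF'$ at the vertex ($w$, or some $v_j$) carrying the half-edge $\eq_i$ — which is precisely how $\overline{F}_i$ is produced by the two-step collapse. Since a perverse schober on a spider is determined by the tuple of its functors $\overline{F}_i$, the right adjoints $\overline{G}_i$ being thereby unique (cf. \Cref{def:schberngon}), the perverse schober obtained by collapsing $\Gs_{12}$ and then $\Gs'$, transported across the equivalence $\lim\F'\simeq\lim(\hF'|_{\Gs'})$, is exactly \eqref{eq:prop}; this completes the induction. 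Alternatively one can check directly that the $\overline{G}_i$ of the one-step \Cref{const:collapadj} agree with the iterated colimits arising from the two-step collapse, using that colimits commute with colimits and that the coherence data $\eta_k,\eta_k',\sigma_k$ compose correctly; this is formal but tedious, which is why I would route the argument through the iterated-limit identity instead. The main obstacle is making that identity fully precise at the level of exit-path categories and relative Kan extensions; everything else is bookkeeping together with the base case \Cref{prop:limscho}.
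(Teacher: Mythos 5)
Your proposal is correct and follows essentially the same route as the paper: the paper's proof is a one-line appeal to the fact that limits can be formed locally, combined with iterating the two-vertex collapse of \Cref{prop:limscho}, which is exactly your induction with the iterated-limit identity $\lim\F'\simeq\lim(\hF'|_{\Gs'})$ spelled out. Your additional bookkeeping (that $N'=N$, that the $\overline{F}_i$ match and the $\overline{G}_i$ are determined as right adjoints) is a faithful elaboration of the same argument rather than a different approach.
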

\begin{proof}
This follows from the fact that limits can be formed locally, allowing us to obtain a perverse schober by combining \Cref{prop:limscho} and iteratively collapsing pairs of two vertices.
\end{proof}
%=========================================================
\paragraph{\textbf{Quotient perverse schober parametrized by the collapsed graph}}
%=========================================================

We define the following two $\Gq$-parametrized perverse schobers as follows:
\begin{itemize}
\item $\hh{G}$, whose restriction to $\on{Exit}(\bG)_{\vq/}$ is the schober given by the $\k$-linear adjunctions in \eqref{eq:prop}; and whose restriction to $\on{Exit}(\bG)_{v/}$ is same as the one of $\hF$ for other vertices $v$ in $\Gq$.
\item $\hh{F}_0$, whose restriction to $\on{Exit}(\bG)_{\vq/}$ is the schober given by the zero $\k$-linear adjunctions $(0\colon \lim\Fs\rightleftarrows 0\cocolon 0)_{i\in \ZZ/N}$; and whose restriction to $\on{Exit}(\bG)_{v/}$ is given by $(0\colon 0\rightleftarrows 0\cocolon 0)$ for other vertices $v$ in $\Gq$.
\end{itemize}
We construct the quotient perverse schober on $\Gq$ by taking semiorthogonal decomposition. More precisely, the inclusion $\lim\Fs\to\lim\F'$ induces an inclusion of perverse schobers $\hh{F}_0\hookrightarrow\hh{G}$, refer to the schematic diagram below
\begin{equation}\label{eq:sub}
\begin{tikzcd}
\hh{F}_0:\arrow[d,"\alpha_2"]& 0 \arrow[d]      & \pb \arrow[d] \arrow[r] \arrow[l]   & 0 \arrow[d]      \\
\hh{G}:& \hh{N}  & \lim\F' \arrow[l] \arrow[r]  & \hh{N}.
\end{tikzcd}
\end{equation} 

Since we have the pushout
\[\begin{tikzcd}
\lim\Fs \arrow[r, hook] \arrow[d] & \lim\F' \arrow[d] \\
0 \arrow[r]              & \lim\F'/\pb,     
\end{tikzcd}\]
then $\lim\Fs\to\lim\F'$ admits a right adjoint. By \Cref{prop:SOD}, we have that $(\pb^{\perp},\pb)$ is an SOD of $\lim\F'$ and an equivalence $\pb^{\perp}\simeq \lim\F'/\pb$.
\begin{lemma}
The collection of $\k$-linear adjunctions of stable $\infty$-categories
\begin{equation}\label{eq:quoscho}
(\widetilde{F}_i\colon \pb^{\perp}\rightleftarrows \hh{N}\cocolon \widetilde{G}_i)_{i\in \ZZ/N}
\end{equation}
gives a perverse schober on the $N$-spider with vertex $\vq.$ In particular, we have a $\Gq$-parametrized perverse schober $\Fquo$, whose restriction to $\on{Exit}(\bG)_{\vq/}$ is the schober given by \eqref{eq:quoscho}; and whose restriction to $\on{Exit}(\bG)_{v/}$ is same as the one of $\hF$ for other vertices $v$ in $\Gq$.
\end{lemma}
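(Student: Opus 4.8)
The plan is to deduce the lemma by transporting the perverse-schober structure $(\overline{F}_i\colon\lim\F'\rightleftarrows\hh{N}\cocolon\overline{G}_i)_{i\in\ZZ/N}$ produced in \Cref{prop:limscho} (and \Cref{propp:limscho} in the iterated situation) along the semiorthogonal decomposition $(\pb^{\perp},\pb)$ of $\lim\F'$. First I would note that the assertion is local: for every vertex $v\neq\vq$ of $\Gq$ the restriction of $\Fquo$ to $\Exit(\Gq)_{v/}$ agrees with that of $\hF$, and the categories attached to the edges match since the generic stalk of $\hF$ at each $e_{\phi(i)}$ is $\hh{N}$; hence the perverse-schober axioms there are inherited from $\hF$, and it remains to show that \eqref{eq:quoscho} is a perverse schober on the $N$-spider at $\vq$. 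As explained before the lemma, all categories in sight are presentable, so the fully faithful inclusion $\iota_{\pb}\colon\pb=\lim\Fs\hookrightarrow\lim\F'$ has a right adjoint; by \Cref{prop:SOD} this makes $(\pb^{\perp},\pb)$ an SOD, and the quotient functor $q\colon\lim\F'\to\lim\F'/\pb\simeq\pb^{\perp}$ is left adjoint to a fully faithful inclusion $\iota\colon\pb^{\perp}\hookrightarrow\lim\F'$ with $q\circ\iota\simeq\id_{\pb^{\perp}}$. I then put $\widetilde{F}_i:=\overline{F}_i\circ\iota$ (equivalently, the functor induced by $\overline{F}_i$ on $\lim\F'/\pb$) and let $\widetilde{G}_i$ be $\overline{G}_i$ viewed as a functor into $\pb^{\perp}$, equivalently $\widetilde{G}_i:=q\circ\overline{G}_i$.

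The crucial input — and the step I expect to be the main obstacle — is the vanishing
\[
\overline{F}_i\circ\iota_{\pb}\simeq 0\qquad\text{for all }i\in\ZZ/N,
\]
equivalently $\im(\overline{G}_i)\subseteq\pb^{\perp}$; this is what makes the two descriptions of $\widetilde{G}_i$ agree and is used throughout what follows. I would prove it from the explicit formula $\overline{F}_i=F_{\phi(i)}\circ\res_{\hh{V}}$ (resp. $F'_{\phi(i)}\circ\res_{\hh{V}'}$) of \Cref{const:collapadj}: the half-edge $\eq_i$ of $\vq$ comes from a half-edge $e_{\phi(i)}$ of $v$ that does \emph{not} lie in $\Gs$, so $F_{\phi(i)}$ is the spherical functor of a direction pointing out of the collapsed subgraph, whereas a global section $s\in\pb=\lim\Fs$ restricts at $v$ to an object $\res_{\hh{V}}(s)$ assembled from the values of $s$ on $\Gs$; the local vanishing axioms $F_{j}\circ G_{k}\simeq 0$ for $k\notin\{j,j+1\}$ at $v$, together with the compatibility of the cyclic orders under $\Gs\hookrightarrow\bG$, then force $F_{\phi(i)}(\res_{\hh{V}}(s))\simeq 0$. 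Carrying this out rigorously means working with the Cartesian sections of the Grothendieck construction of \Cref{const:collapadj} rather than with homotopy categories, and verifying that the property is stable under the iterated collapse of \Cref{propp:limscho}; this is where the hypothesis that $\Gs$ is a full subgraph is really needed. Equivalently, this vanishing is exactly the condition for $\alpha_2$ in \eqref{eq:sub} to be an inclusion of perverse schobers in the sense of \Cref{def:SODschobers}, so that the lemma can alternatively be phrased as the statement that the cofiber of a right-admissible inclusion of $\Gq$-parametrized perverse schobers is again one.

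Granting this, the remaining verifications are routine. From $\im(\overline{G}_j)\subseteq\pb^{\perp}$ we get $\iota\circ\widetilde{G}_j\simeq\overline{G}_j$, hence
\[
\widetilde{F}_i\circ\widetilde{G}_j=\overline{F}_i\circ\iota\circ\widetilde{G}_j\simeq\overline{F}_i\circ\overline{G}_j\qquad\text{for all }i,j,
\]
so the counit $\widetilde{F}_i\widetilde{G}_i\to\id_{\hh{N}}$ is an equivalence, $\widetilde{F}_i\widetilde{G}_{i+1}$ is an equivalence, and $\widetilde{F}_i\widetilde{G}_j\simeq 0$ for $j\notin\{i,i+1\}$, all inherited from $(\overline{F}_i,\overline{G}_i)$; the adjunction $\widetilde{F}_i\dashv\widetilde{G}_i$ follows from full faithfulness of $\iota$ and $\overline{G}_j(\hh{N})\subseteq\pb^{\perp}$. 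Moreover $\widetilde{G}_i$ has right adjoint $(\overline{G}_i)^{R}\circ\iota$ and $\widetilde{F}_i$ has left adjoint $q\circ{}^{L}\overline{F}_i$, and applying $q$ to the identity $\im(\overline{G}_{i+1})=\im({}^{L}\overline{F}_i)$ from condition (5) of \Cref{def:schberngon} for $(\overline{F},\overline{G})$ — both images lying in $\pb^{\perp}$, on which $q$ restricts to the identity — yields $\im(\widetilde{G}_{i+1})=\im({}^{L}\widetilde{F}_i)$. This verifies all conditions of \Cref{def:schberngon} for \eqref{eq:quoscho}, and gluing this schober at $\vq$ with the unchanged data of $\hF$ at the other vertices of $\Gq$ produces the desired $\Gq$-parametrized perverse schober $\Fquo$.
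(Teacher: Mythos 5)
Your proposal takes essentially the same route as the paper: identify $\pb^{\perp}$ with $\lim\F'/\pb$ via the SOD, let $\widetilde{F}_i$ be the functor induced by $\overline{F}_i$ on the quotient and $\widetilde{G}_i=q\circ\overline{G}_i$, and transport the spider axioms from $(\overline{F}_i,\overline{G}_i)$ using $\widetilde{F}_i\circ\widetilde{G}_j\simeq\overline{F}_i\circ\overline{G}_j$. The only difference is one of detail: you make explicit the vanishing $\overline{F}_i|_{\pb}\simeq 0$ and the verification of conditions (4)--(5), which the paper leaves implicit (the vanishing being subsumed in its earlier assertion that $\alpha_2\colon\hh{F}_0\to\hh{G}$ is an inclusion of perverse schobers), and your checks of the adjunctions and of $\im(\widetilde{G}_{i+1})=\im({}^{L}\widetilde{F}_i)$ are correct.
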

\begin{proof}
That is because that if we identify $\pb^{\perp}$ with $\lim\F'/\pb$, the corresponding $\widetilde{F}_i$ is given by the induced morphism of $\overline{F}_i$ on the quotient and $\widetilde{G}_i$ is given by the composition $q\circ \overline{G}_i$, see the following diagram:
\[\begin{tikzcd}
\lim \arrow[d, "q"'] \arrow[rd, "\overline{F}_i", shift left] &                                      \\
\lim/\pb \arrow[r, "\widetilde{F}_i"', dashed]         & \hh{N}. \arrow[lu, "\overline{G}_i", shift left]
\end{tikzcd}\]
Then the composition $\widetilde{F}_i\circ\widetilde{G}_j$ is equivalent to $\overline{F}_i\circ\overline{G}_j$, which implies the result.
\end{proof}

\begin{proposition}\label{prop:sod}
There is a semiorthogonal decomposition of $\hh{G}$ given by the pair $(\Fquo, \hh{F}_0)$.
\end{proposition}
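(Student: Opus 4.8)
The plan is to unwind \Cref{def:SODschobers} for $\hh{G}$ over $\Exit(\Gq)$ and verify its four items objectwise, using that (co)limits of diagrams valued in $\Prl$ or $\Prr$ are computed pointwise. By \Cref{propp:limscho} it suffices to treat the base case in which $\Gs$ has two vertices $v,v'$ collapsed to a single vertex $\vq$; the general case follows by iterating the collapse. At every object of $\Exit(\Gq)$ other than $\vq$ and its half-edges, $\hh{F}_0$ has value $0$ while $\hh{G}$ and $\Fquo$ both carry the unchanged values of $\hF$, so there the relevant local datum is the trivial SOD $(\hF(v),0)$ and all conditions hold vacuously. Thus the entire content is concentrated at $\vq$ and at its $N$ half-edges $\overline{e}_i$: the local SODs to assemble are $(\pb^{\perp},\pb)$ of $\lim\F'$ — already produced before the statement via the pushout square, \Cref{prop:SOD}, and the identification $\pb^{\perp}\simeq\lim\F'/\pb$ — and the trivial SOD $(\hh{N},0)$ of $\hh{N}$ at each $\overline{e}_i$.

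With this in hand I would take $\alpha_2\colon\hh{F}_0\to\hh{G}$ to be the objectwise inclusion of \eqref{eq:sub}, namely $\pb\hookrightarrow\lim\F'$ at $\vq$, $0\hookrightarrow\hh{N}$ at each $\overline{e}_i$, and $0\to\hF$ elsewhere; and $\alpha_1\colon\Fquo\to\hh{G}$ to be the objectwise inclusion that is $\pb^{\perp}\simeq\lim\F'/\pb\hookrightarrow\lim\F'$ at $\vq$ and the identity at every half-edge and every remaining object. The one step that is not formal — and the place I expect the real work to sit — is checking that $\alpha_1$ and $\alpha_2$ are genuine morphisms of $\Gq$-parametrized perverse schobers, i.e.\ that their naturality squares against the spherical functors commute. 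At $\vq$ this reduces to a single assertion: the essential image of each $\overline{G}_i$ lies in $\pb^{\perp}$, equivalently $\overline{F}_i\circ(\pb\hookrightarrow\lim\F')\simeq 0$ for every $i$ (the two are equivalent via the adjunction $\overline{F}_i\dashv\overline{G}_i$ and the description of $\pb^{\perp}$ as the right orthogonal of $\pb$). This vanishing is exactly what is built into \Cref{const:collapadj}: $\overline{G}_i$ is assembled from the right adjoints $G_k$, $G_k'$ and from $p$-relative right Kan extensions precisely so that the resulting Cartesian sections are right-orthogonal to the global sections of $\Fs$, and it underlies the lemma immediately above, where $\widetilde{F}_i$ is defined as the functor induced by $\overline{F}_i$ on $\lim\F'/\pb$ and $\widetilde{G}_i=q\circ\overline{G}_i$. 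Granting it, $\alpha_2\in\Fun(\Exit(\Gq),\Prl)$ and $\alpha_1\in\Fun(\Exit(\Gq),\Prr)$ are objectwise fully faithful, which is items (1) and (2).

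It remains to dispatch items (3) and (4). For (3), $\alpha_2$ is objectwise right admissible: at $\vq$ the inclusion $\pb\hookrightarrow\lim\F'$ has a right adjoint (this is how the SOD $(\pb^{\perp},\pb)$ was obtained), while $0\hookrightarrow\hh{N}$ and $0\hookrightarrow\hF(v)$ trivially do. For (4), since cofibers in $\Fun(\Exit(\Gq),\Prl)$ are computed pointwise, $\mathrm{cofib}(\alpha_2)$ has value $\mathrm{cofib}(\pb\hookrightarrow\lim\F')\simeq\lim\F'/\pb\simeq\pb^{\perp}$ at $\vq$, value $\hh{N}$ at each $\overline{e}_i$, and the unchanged values of $\hF$ elsewhere; by the preceding lemma the perverse-schober structure this cofiber carries at $\vq$ is exactly $(\widetilde{F}_i\colon\pb^{\perp}\rightleftarrows\hh{N}\cocolon\widetilde{G}_i)_{i\in\ZZ/N}$, so $\mathrm{cofib}(\alpha_2)=\Fquo$. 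The cofiber morphism $\beta\colon\hh{G}\to\Fquo$ is at $\vq$ the quotient $q\colon\lim\F'\to\lim\F'/\pb\simeq\pb^{\perp}$, which is the left adjoint of $\alpha_1$ there because $(\pb^{\perp},\pb)$ is an SOD (\Cref{prop:SOD}), and is the identity elsewhere; hence $\beta$ is pointwise left adjoint to $\alpha_1$, so $\alpha_1$ is left admissible. This establishes all of \Cref{def:SODschobers}, so $(\Fquo,\hh{F}_0)$ is a semiorthogonal decomposition of $\hh{G}$. Apart from the crux — the compatibility of $(\pb^{\perp},\pb)$ at $\vq$ with the spherical functors, i.e.\ the vanishing of each $\overline{F}_i$ on $\pb$ — the argument is a formal consequence of \Cref{prop:SOD} applied fibrewise together with the pointwise behaviour of (co)limits and adjoints for diagrams valued in $\Prl$ and $\Prr$.
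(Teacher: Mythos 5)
Your proof is correct and follows essentially the same route as the paper: both verify the conditions of \Cref{def:SODschobers} directly, with all the content concentrated at $\vq$, where the SOD $(\pb^{\perp},\pb)$ of $\lim\F'$ provided by \Cref{prop:SOD} supplies the inclusions, the right admissibility of $\alpha_2$, and the pointwise cofiber identification of $\mathrm{cofib}(\alpha_2)$ with $\Fquo$, everything elsewhere being zero or identity maps. The only difference is one of explicitness: you spell out the naturality requirement $\overline{F}_i|_{\pb}\simeq 0$ (equivalently, that $\overline{G}_i$ lands in $\pb^{\perp}$), which the paper leaves implicit in the diagram \eqref{eq:sub} and in the preceding lemma's definition of $\widetilde{F}_i$ as the functor induced on the quotient.
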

\begin{proof}
We refer to the schematic diagram below and verify the conditions in \Cref{def:SODschobers}.
\begin{equation}\label{eq:SODper}
\begin{tikzcd}
\hh{F}_0:\arrow[d,hook,"\alpha_2"]& 0 \arrow[d]      & \pb \arrow[d] \arrow[r] \arrow[l]   & 0 \arrow[d]      \\
\hh{G}:& \hh{N}  & \lim\F' \arrow[l] \arrow[r]  & \hh{N}  \\
\Fquo:\arrow[u,hook,"\alpha_1"']& \hh{N} \arrow[u]          &\pb^{\perp}\simeq\lim\F'/\pb\arrow[u] \arrow[l] \arrow[r]        & \hh{N}\arrow[u]          
\end{tikzcd}   
\end{equation}
It is clear that both $\alpha_1$ and $\alpha_2$ are inclusions, as the nontrivial maps in the diagram are inclusions given by the SOD of $\lim\F'$, while the others are either zero maps or the identity map $\id_{\hh{N}}$. Furthermore, $\alpha_2$ is right admissible, since the right adjoint of zero map, from zero category to arbitrary $\infty$-category are zero, and the composition with zero map is also zero. The last condition in \Cref{def:SODschobers} is automatically satisfied by the construction of the SOD of $\lim\F'$. Therefore, we have established the SOD of $\hh{G}$.
\end{proof}
\begin{theorem}\label{thm:exactseq}
We have a cofiber sequence
\begin{equation}\label{eq:fibco}
\glo({\Gs},\Fs)\to\glo({\bG},\F)\to\glo({\Gq},\Fquo)
\end{equation}
in $\St$, which induces a short exact sequence of triangulated categories
\begin{equation}\label{eq:fibcotri}
\Ho\big(\glo({\Gs},\Fs)\big)\to\Ho\big(\glo({\bG},\F)\big)\to\Ho\big(\glo({\Gq},\Fquo)\big)
\end{equation}
when by passing to the homotopy categories.
\end{theorem}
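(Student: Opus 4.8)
The plan is to deduce \eqref{eq:fibco} by applying the global-sections functor $\glo(\Gq,-)=\lim_{\Exit(\Gq)}$ to the semiorthogonal decomposition $(\Fquo,\hh{F}_0)$ of the perverse schober $\hh{G}$ provided by \Cref{prop:sod}, and then to identify the three terms that come out. By \Cref{def:SODschobers} together with \Cref{prop:sod}, the pointwise sequence $\hh{F}_0\xrightarrow{\alpha_2}\hh{G}\xrightarrow{\beta}\Fquo$ is a split Verdier sequence in $\Fun(\Exit(\Gq),\Prl)$: $\alpha_2$ is a fully faithful inclusion admitting a right adjoint, $\Fquo$ is the cofiber of $\alpha_2$, and the cofiber morphism $\beta$ is pointwise left adjoint to $\alpha_1\colon\Fquo\to\hh{G}$. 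The first step would be to show that $\glo(\Gq,-)$ sends this to a split Verdier sequence $\glo(\Gq,\hh{F}_0)\to\glo(\Gq,\hh{G})\to\glo(\Gq,\Fquo)$ in $\St$; concretely, that full faithfulness of the inclusions, right admissibility of $\alpha_2$, and the cofiber identification are all preserved by forming the limit over $\Exit(\Gq)$. Here a limit of fully faithful functors is again fully faithful, and the pointwise right adjoint of $\alpha_2$, which is a morphism in $\Fun(\Exit(\Gq),\Prr)$, assembles by the standard adjointability of limit squares into a right adjoint of $\lim\alpha_2$; compare the analogous arguments in \cite{Ccluster,CDW}.

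The second step is to identify the outer terms. By construction $\hh{G}$ is the right Kan extension of $\F$ along the collapse functor $\Exit(\bG)\to\Exit(\Gq)$ --- its stalk at $\vq$ being the local limit $\lim\F'\simeq\glo(\Gs,\Fs)$ --- so, since limits compose, $\glo(\Gq,\hh{G})\simeq\glo(\bG,\F)$; this is the ``limits are computed locally'' principle already used for \Cref{propp:limscho}. For the kernel term, $\hh{F}_0$ is supported only at the vertex $\vq$, where its value is $\lim\Fs$, and is zero on every edge incident to $\vq$ and on every other vertex and edge of $\Gq$. Since in $\Exit(\Gq)$ the object $\vq$ maps to all edges incident to it, a cone over the diagram $\hh{F}_0$ amounts to a single map into $\lim\Fs$, the remaining comparison maps being forced into the zero category; hence $\glo(\Gq,\hh{F}_0)\simeq\lim\Fs=\glo(\Gs,\Fs)$ by \Cref{def:sections}. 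Combining the two steps gives the split Verdier sequence \eqref{eq:fibco} in $\St$.

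Finally, to obtain \eqref{eq:fibcotri} one passes to homotopy categories. By \Cref{prop:SOD}(1),(3) applied to the globalized decomposition, $\glo(\Gs,\Fs)\hookrightarrow\glo(\bG,\F)$ is fully faithful with $\glo(\Gs,\Fs)={}^{\perp}\glo(\Gq,\Fquo)$, the inclusion of $\glo(\Gs,\Fs)$ admits a right adjoint, and that of $\glo(\Gq,\Fquo)$ admits a left adjoint, so $\glo(\Gq,\Fquo)\simeq\glo(\bG,\F)/\glo(\Gs,\Fs)$. Then \Cref{prop:SOD}(2) shows that $(\ho\glo(\Gq,\Fquo),\ho\glo(\Gs,\Fs))$ is a semiorthogonal decomposition of $\ho\glo(\bG,\F)$, which is precisely the short exact sequence of triangulated categories \eqref{eq:fibcotri}.

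I expect the main obstacle to be the first step: a careful verification that $\glo(\Gq,-)=\lim_{\Exit(\Gq)}$ preserves semiorthogonal decompositions of perverse schobers, the delicate point being the adjointability bookkeeping that upgrades the pointwise right adjoint of $\alpha_2$ to a right adjoint after taking the limit, so that $\glo(\Gq,\Fquo)$ is genuinely the Verdier quotient rather than merely a subquotient. Once this is in place, everything else follows formally from \Cref{prop:SOD} and the local computation of limits.
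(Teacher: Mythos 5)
Your proposal is correct and follows essentially the same route as the paper: apply global sections to the semiorthogonal decomposition $(\Fquo,\hh{F}_0)$ of $\hh{G}$ from \Cref{prop:sod}, identify $\glo(\Gq,\hh{G})\simeq\glo(\bG,\F)$ by the locality of limits and $\glo(\Gq,\hh{F}_0)\simeq\lim\Fs\simeq\glo(\Gs,\Fs)$, then pass to homotopy categories. The only difference is that the step you flag as the main obstacle --- that an SOD of perverse schobers induces a (split) Verdier sequence on global sections --- is not proved in the paper but simply quoted from Remark 3.17 in \cite{Ccluster}, so your adjointability argument amounts to a direct verification of that cited result.
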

\begin{proof}
By Remark 3.17 in \cite{Ccluster}, we have that there exists a cofiber sequence
\[\glo({\Gq},\F_0)\xrightarrow{i} \glo({\Gq},\hh{G})\xrightarrow{\pi} \glo({\Gq},\Fquo)\,\]
in $\St$. In fact, we have $\glo({\Gq},\hh{G})\simeq \glo({\bG},\F)$, where the limit will not change under local limit action. On the other hand, we have
\[\glo({\Gq},\F_0)\simeq \lim\Fs\simeq \glo({\Gs},\Fs)\]
Thus we obtain the cofiber sequence in \eqref{eq:fibco}, which implies a Verdier quotient by passing to the homotopy categories.
\end{proof}
\begin{remark}
By Proposition 3.6 in \cite{CHQ}, we know that given a perverse schober $\F$ on the $n$-spider with vertex $v$, we can obtain the associated spherical adjunction by taking fiber along all but one half-edges attached to $v$ iteratively. Since limit commutes with limit, then we can obtain the spherical adjunction of the quotient perverse schober by first taking fiber along those half-edges not in $\Gs$ but attached to the vertices in $\Gs$ and then taking the limit.
\end{remark}
%=========================================================
\paragraph{\textbf{An example}}
%=========================================================
Let $\surf$ be an annulus with one marked point on each boundary component and $\bG$ be a spanning graph, which is drawn in red in the left one of \Cref{fig:kreo}. The green one is the subsurface $\subsur$ of $\surf$, which has one marked point on the inner boundary component while no marked points on the other one. As shown in the figure, the thick red graph $\Gs$ is a spanning graph of $\subsur$.
\begin{figure}[h]\centering
\begin{tikzpicture}
  % 画一个大的白色圆圈
  \fill[cyan!10] (0,0) circle (2.2);
  % 画一个小一点的灰色圆圈遮住中间，形成圆环
\fill[green!10] (0,0) circle (1.8);
  \fill[gray!10] (0,0) circle (.8);
  % 画出外边界
  \draw[thick] (0,0) circle (2.2);
  % 画出内边界
  \draw[thick] (0,0) circle (1.8);
  \draw[thick] (0,0) circle (.8);
  \draw[very thick, red] (0,0) circle (1.3);
\draw[thick, red] (180:1.3) to (180:2.2);
\draw[black] (180:.8)\nn (0:2.2)\nn; 
\draw[red](0:1.3)\nn (180:1.3)\nn;
\draw(3,0)node{\Huge{$\rightsquigarrow$}};
\begin{scope}[shift={(6.5,0)}]
\fill[cyan!10] (0,0) circle (2.2);
\draw[thick] (0,0) circle (2.2);
\draw[thick, red] (0:0) to (180:2.2);
\draw[black] (0:2.2)\nn; 
\draw[red] (0:0)node[right]{$\vq$}\nn;
\end{scope}
\end{tikzpicture}
\caption{An example of collapse}\label{fig:kreo}
\end{figure}
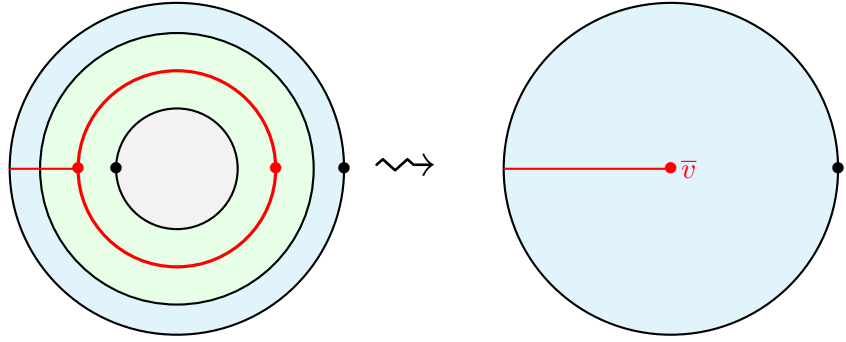
We consider the $\bG$-parametrized perverse schober associated with the spherical adjunction without singularities:
\[0\colon0\rightleftarrows \D(\k)\cocolon 0;\]
and $\Gs$-parametrized perverse schober associated with the same spherical adjunctions. We write the corresponding perverse schober diagrams as follows:
\[\begin{tikzcd}[column sep=15pt]
       &                                                                                        & \D(\k) &                                               &            &                                                   & \D(\k) &                                                   \\
\D(\k) & {\Fun(\Tri^1,\D(\k))} \arrow[l, "\cof"'] \arrow[ru, "\ev_1"] \arrow[rd, "{\ev_0[1]}"'] &        & \D(\k) \arrow[ld, "{[1]}"] \arrow[lu, "\id"'] & \mbox{and} & \D(A_1) \arrow[ru, "\id"] \arrow[rd, "{\id[1]}"'] &        & {\D(A_1).} \arrow[ld, "{[1]}"] \arrow[lu, "\id"'] \\
       &                                                                                        & \D(\k) &                                               &            &                                                   & \D(\k) &                                                  
\end{tikzcd}\]
Similar as shown in section 6.2 in \cite{CHQ}, the limit $\C(\bG,\F)$ of the left diagram is equivalent to the derived category $\D(\k[t])$ of graded polynomial algebra $\k[t]$ with degree 1, and the limit $\C(\Gs,\Fs)$ of the right diagram is equivalent to the derived category $\D(\k[t^{\pm}])$ of graded Laurent algebra $\k[t^{\pm}]$.

Let $\Sq$ be the collapsed surface, which is a disk with one marked point on the boundary, and $\Gq$ be the collapsed graph, which is a 1-spider with new vertex $\vq$. By \Cref{propp:limscho}, the perverse schober $\hh{G}$ parametrized by 1-spider with vertex $\vq$ is given by the spherical adjunctions
\[?\Lten_{\k}\k\colon\D(\k[t])\rightleftarrows \D(\k)\cocolon ?\Lten_{\k}\k[t].\]
By \Cref{prop:sod}, we get a semiorthogonal decomposition of the perverse schober given by the following spherical adjunctions:
\begin{equation}
\begin{tikzcd}
{\D(\k[t^{\pm}])} \arrow[r, shift left] \arrow[d] & 0 \arrow[d] \arrow[l, shift left]         \\
{\D(\k[t])} \arrow[r, shift left]                 & \D(\k) \arrow[l, shift left]              \\
{\D(\k[\varepsilon]/\varepsilon^2=0)} \arrow[u] \arrow[r, shift left]        & {\D(\k)} \arrow[u] \arrow[l, shift left]
\end{tikzcd}
\end{equation}
with $|\varepsilon|=-2$, and the last row gives the perverse schober $\Fquo$ parametrized by 1-spider with vertex $\vq$. By \Cref{thm:exactseq}, we obtain that $\C(\Gq, \Fquo)$ is equivalent to $\D(\k[\varepsilon]/\varepsilon^2=0)$, which is the quotient category assoicated to $(\Sq,\Gq)$.
%=========================================================
\subsection{A variation of \Cref{thm:EGiso} using quotient perverse schober}
%=========================================================
For the weighted DMS $\sow$ and $\subsur$ be its subsurface in \Cref{sec:sow}. Let $\AS$ be a mixed-angulation of $\sow$ and $\SS$ be its dual S-graph. We take $\bG$ to be $\SS$ and fix a $\SS$-parametrized perverse schober $\hF$ with a positive arc system kit in the sense of \cite{CHQ}. Then the $\infty$-category associated with the surface $\sow$ determined by $\hF$ is $\glo(\SS,\hF)$. By Proposition 4.3 in \cite{CHQ}, we have that $\glo(\SS,\hF)$ only depends on the choice of perverse schober but is independent of the ribbon graph $\SS$. That is, flips of dual graph $\SS$ gives equivalent $\infty$-categories. We know that its homotopy category, which is denoted by $\D(\sow)\coloneqq\Ho\big(\glo(\SS,\hF)\big)$, is a triangulated category. By Construction 4.12, Proposition 4.14, Lemma 4.16 and Corollary 5.1, $\D(\sow)$ satisfies the conditions in \cite{CHQ}.

As indicated in \Cref{sec:P2}, we have the spanning graph $\SS|_{\subsur}$ of $\subsur$, which is a subgraph of $\SS$ obtained by deleting the ones in $\SS$ not in $\subsur$; and $\Squo$ obtained from $\SS$ by replacing the subgraph $\eSsub$ with a single vertex under the collapse operation
\begin{tikzcd}  
\sow \ar[r,rightsquigarrow] & \colwsur.  
\end{tikzcd}  
%  We also have that $\Gq$ is a spanning graph of $\Sq$.

Categorically, let $\Fsub$ be a $\SS|_{\subsur}$-parametrized perverse schober which is restricted from $\F$ and $\glo(\SS|_{\subsur},\Fsub)$ be the $\infty$-category of global sections. We denote its homotopy category by $\D(\subsur)$. By \Cref{thm:exactseq}, there exists a $\Squo$-parametrized quotient perverse schober $\Fquo$ with limit $\glo(\Squo,\Fquo)$, which fits into the following short exact sequence of triangulated categories: 
\begin{equation}\label{eq:ses.cats1}
\begin{tikzcd}
    0 \ar[r] & \D(\subsur) \ar[r] & \Dw \ar[r,"\pi"] &\D(\colwsur) \ar[r] & 0.
\end{tikzcd}
\end{equation}
Here $\D(\colwsur)$ is the homotopy category of $\glo(\Squo,\Fquo)$. Similarly, let $\pv$ be the subcategory of $\D(\sow)$ generated by $\{X_{\we}:\we~\mbox{is a graded closed arc in}~\SS\}$ and $\pvs$ be the subcategory of $\D(\subsur)$ generated by $\{X_{\we}:\we~\mbox{is a graded closed arc in}~\Ssub\}.$ Then \eqref{eq:ses.cats1} induces the following Verdier quotient of triangulated categories:
\begin{equation}\label{eq:ses.pvds1}
\begin{tikzcd}
    0 \ar[r] & \pvs \ar[r] & \pv \ar[r,"\pi"] &\pvq \ar[r] & 0,
\end{tikzcd}
\end{equation}
where $\pvq$ is the subcategory of $\D(\colwsur)$ generated by $\{X_{\we}\}$ with $\we$ being the graded closed arc in $\Squo.$

With the quotient perverse schober structure, we can easily obtain the \Cref{thm:EGiso} using the same method of Corollary 5.1 in \cite{CHQ}.
%=========================================================
\section{Quadratic differentials as stability conditions}\label{sec:5}
%=========================================================
%=========================================================
\subsection{Moduli spaces of quadratic differentials}
%=========================================================
We recall the local coordinates of the moduli space $\on{FQuad}(\sow)$. For any quadratic differential $\varphi$ on a Riemann surface $C$, let $\widehat{C}$ be the branched double cover of $C$, $\widehat{Z}$ and $\widehat{P}$ be the preimage of sets of zeros and poles of $\varphi$ on the double cover $\widehat{C}$. Consider the \emph{hat-homology group}
\[\widehat{H}_1(\varphi)=H_1(\widehat{C}\backslash\widehat{P},\widehat{Z},\mathbb{C})^-,\]
which is the anti-invariant part of the homology group with respect to the involution $\tau$ of the cover $\widehat{C}$. Given a flat surface $(C,\varphi)$, we have the integration map
\begin{equation}
\int : \widehat{H}_1(\varphi)\to\mathbb C, \quad \we\mapsto \int_{\we}\sqrt{\varphi}.
\end{equation}
In a neighborhood $U(\varphi)$ of $\varphi$ in $\on{FQuad}(\sow)$, the combination of all such maps gives a local homeomorphism
\begin{equation}
\label{period map}
\on{Per}: \mathrm{FQuad}(\sow)\to H^1(\widehat{C}\backslash\widehat{P},\widehat{Z},\mathbb{C})^-,
\end{equation}
called the \emph{period map} on $\mathrm{FQuad}(\sow)$. The affine structure on $\mathrm{FQuad}(\sow)$ can be defined by the period map $\on{Per}$.

Note that for the connected component $\EGb(\colwsur)$ of $\EG(\colwsur)$ in \Cref{thm:EGiso}, there is a corresponding generic-finite component $\on{Stab}^{\bullet}(\pvq)$ of $\on{Stab}(\pvq)$ given by
\[\on{Stab}^{\bullet}(\pvq)=\mathbb C\cdot\bigcup_{\mathcal H\in\mathrm{EG}^{\bullet}(\pvq)}\mathrm{U}(\mathcal H)=\mathbb C\cdot\bigcup_{\mathcal H\in\mathrm{EG}^{\bullet}(\colwsur)}\mathrm{U}(\mathcal H),\]
which is called the \emph{principal component}. We now begin to use the graph isomorphism in \Cref{thm:EGiso} to get an isomorphism of complex manifolds to the principal component $\mathrm{Stab}^{\bullet}(\pvq)$, which is a generic-finite component of $\mathrm{Stab}(\pvq)$. The proof closely follows the arguments in \cite{BS,BMQS,CHQ}.

Now we first recall a technical lemma on stratification, which can be used to expand our isomorphism, following \cite{BS,BMQS}.

Let $\mathrm{FQuad}^{\bullet}(\sow)$ be the connected component of $\mathrm{FQuad}(\sow)$ which is corresponding to the principal component $\mathrm{EG}^{\bullet}(\sow)$ of the exchange graph $\mathrm{EG}(\sow)$. Then there is an increasing chain of subspaces of $\mathrm{FQuad}(\sow)$, given by
\[B_0^{\bullet}=B_1^{\bullet}\subset B_2^{\bullet}\subset\cdots\subset\mathrm{FQuad}^{\bullet}(\sow),\]
where the subspace
\[B_p^{\bullet}:=B_p^{\bullet}(\sow)=\{q\in\mathrm{FQuad}^{\bullet}(\sow)\,|\,r_q+2s_q\le p\}\]
is defined by the number $s_q$ of saddle trajectories and $r_q$ of recurrent trajectories. We define $F_p^{\bullet}=B_p^{\bullet}\backslash B_{p-1}^{\bullet}$.

Let us give some properties of the stratification, including ``walls have ends'', referring to Lemma 4.1, Proposition 4.2, Corollary 4.3 in \cite{BMQS}.

\begin{lemma}
\label{lem:wall has ends}
(1) We have $\mathrm{FQuad}^{\bullet}(\sow)=\mathbb C\cdot B_0^{\bullet}(\sow)$.

(2) If $p>2$ and there is not only one negative order singularity of order $-2$, each component of the stratum $F_p^{\bullet}$ contains a point $\varphi$ and a neighborhood $U\subset\mathrm{FQuad}^{\bullet}(\sow)$ of $\varphi$ such that $U\cap B_p^{\bullet}$ is contained in the locus $\on{Per}(\we)\in\mathbb R$ for some $\we\in \widehat{H}_1(\varphi)$. Moreover, this containment is strict in the sense that $U\cap B_{p-1}^{\bullet}$ is connected.

As a result, any path in $\mathrm{FQuad}^{\bullet}(\sow)$ is homotopic relative to its endpoints to a path in $B_2^{\bullet}$.
\end{lemma}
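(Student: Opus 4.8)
The plan is to follow the stratification strategy of Bridgeland--Smith \cite{BS}, as adapted in \cite{BMQS}, and prove the statement about the strata $B_p^\bullet$ and $F_p^\bullet$ by induction on $p$, with the ``walls have ends'' phenomenon providing the inductive step. I would organize the argument around the period map $\on{Per}$ of \eqref{period map}, which gives local affine coordinates on $\mathrm{FQuad}^\bullet(\sow)$ via the hat-homology group $\widehat H_1(\varphi)$, and around the geometry of horizontal strip decompositions of saddle-free differentials.

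First I would establish part (1). A saddle-free differential $\varphi$ lies in $B_0^\bullet$ by definition, since it has no saddle trajectories and no recurrent trajectories (so $s_\varphi = r_\varphi = 0$). Conversely, given an arbitrary $\varphi \in \mathrm{FQuad}^\bullet(\sow)$, rotating the differential by $\on{e}^{\on{i}\pi s}$ for generic $s$ makes it saddle-free: the set of phases $\theta$ for which a saddle trajectory appears is countable (each saddle connection contributes one bad phase, up to the relevant identifications), so a generic rotation avoids all of them. This is exactly the $\mathbb C$-action $s\cdot(Z,\mathcal P)$ recalled in \Cref{sec:2}, transported to $\mathrm{FQuad}$, and it shows $\mathrm{FQuad}^\bullet(\sow) = \mathbb C\cdot B_0^\bullet(\sow)$. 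One must check that the rotation stays inside the \emph{same} connected component $\mathrm{FQuad}^\bullet$, but this is immediate because the $\mathbb C$-action is continuous and $\mathbb C$ is connected.

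Next, for part (2): given a component of $F_p^\bullet$ with $p > 2$ and the hypothesis that there is more than one negative-order singularity of order $-2$ (the exclusion that guarantees the relevant saddle connection is not forced to be a closed loop enclosing the whole surface), I would pick $\varphi \in F_p^\bullet$ and analyze its trajectory structure. The condition $r_\varphi + 2s_\varphi = p$ means $\varphi$ has either a saddle trajectory or a recurrent trajectory; in the saddle case, deforming $\varphi$ by changing the imaginary part of the period of the class $\we \in \widehat H_1(\varphi)$ represented by the saddle connection splits the saddle trajectory and lowers $r + 2s$. Thus, in a small neighborhood $U$, the locus $B_p^\bullet \cap U$ is cut out (locally) by the single real equation $\on{Im}\on{Per}(\we) = 0$, i.e. $\on{Per}(\we) \in \mathbb R$; and crossing this wall in either direction lands in $B_{p-1}^\bullet$, whose intersection with $U$ is therefore the connected complement of a real hyperplane. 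The recurrent case requires the more delicate analysis of \cite[Prop.~4.2]{BMQS}: one uses that a recurrent trajectory is confined to a subsurface with its own invariant, and the ``walls have ends'' argument shows the wall locus is not all of $U$ near a suitable $\varphi$. I would cite these results from \cite{BMQS} directly, noting only that the presence of weight $-1$ decorations (simple poles) does not affect the local trajectory analysis, since simple poles behave like order-$1$ zeros for the flat metric and contribute ordinary conical points of angle $3\pi$.

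Finally, the ``as a result'' clause: given any path $\xi$ in $\mathrm{FQuad}^\bullet(\sow)$, I would push it down into $B_2^\bullet$ by downward induction on the top stratum it meets. If $\xi$ passes through $F_p^\bullet$ for some $p > 2$, cover the relevant portion of $\xi$ by finitely many neighborhoods $U$ of the type produced in part (2); inside each such $U$, since $U \cap B_{p-1}^\bullet$ is connected and $U \cap B_p^\bullet$ is contained in a proper real-analytic locus, one can homotope the sub-path (rel endpoints, which can be taken outside $F_p^\bullet$) off $F_p^\bullet$ and into $B_{p-1}^\bullet$. Iterating down to $p = 2$ finishes the argument. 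The main obstacle I anticipate is making the ``walls have ends'' step fully rigorous in the recurrent-trajectory case with arbitrary-order poles and simple poles present --- this is where the geometry is genuinely subtle, and my plan is to reduce it to the corresponding statement in \cite{BMQS} by checking that their local model arguments are insensitive to the addition of simple poles, rather than redoing the analysis from scratch.
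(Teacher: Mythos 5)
Your strategy matches what the paper actually does: the paper offers no proof of this lemma at all, but simply imports it from \cite{BMQS} (Lemma 4.1, Proposition 4.2, Corollary 4.3), so your plan of sketching the Bridgeland--Smith rotation argument for (1), the period-map/wall analysis for (2), and then reducing the delicate recurrent-trajectory case to \cite{BMQS} is essentially the same route, just with more detail spelled out. One concrete inaccuracy in your justification, however, sits exactly at the point that is new in this paper: a simple pole (weight $-1$) does \emph{not} behave like an order-one zero. The flat metric has cone angle $(w+2)\pi=\pi$ there, not $3\pi$, and such a point has exactly one horizontal separatrix rather than three, so the local trajectory structure near a simple pole is genuinely different from that near a zero. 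Since allowing simple poles is precisely the extension beyond \cite{BMQS}, the claim that their local model arguments are ``insensitive to the addition of simple poles'' needs to be checked against the correct local picture (cone angle $\pi$, a single separatrix) rather than asserted by analogy with zeros; with that correction the reduction to \cite{BMQS} is the intended argument, and is exactly what the paper implicitly relies on by citation.
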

%=========================================================
\subsection{Isomorphisms between moduli spaces}
%=========================================================
\begin{theorem}\label{thm:quadstab}
There exists an injective morphism $\iota$ between complex manifolds
\[
\iota:\on{FQuad}^{\bullet}(\colwsur)\to\on{Stab}(\pvq).
\]
Moreover, it is an isomorphism of complex manifolds between $\on{FQuad}^{\bullet}(\colwsur)$ and the image $\on{Stab}^{\bullet}(\pvq)$, the principal component corresponding to $\mathrm{EG}^{\bullet}(\pvq)$.
\end{theorem}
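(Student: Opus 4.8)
The plan is to follow the strategy of \cite{BS,BMQS,CHQ}, feeding in the exchange graph isomorphism of \Cref{thm:EGiso} to transport the quadratic-differential picture on $\colwsur$ to stability conditions on $\pvq$. First I would construct $\iota$ on the saddle-free locus: by \Cref{lem:wall has ends}(1) it suffices to define $\iota$ on $B_0^{\bullet}(\colwsur)$ and extend by the $\mathbb{C}$-action. For a framed differential $(C,\varphi,f)$ representing a point of $B_0^{\bullet}$, the horizontal strip decomposition of $\varphi$ determines a mixed-angulation $\uA(\varphi)$ of $\colwsur$ lying in $\EGb(\colwsur)$; via \Cref{thm:EGiso} and \Cref{prop:objcor} this produces a heart $\h_{\uA(\varphi)}$ of quotient type in $\pvq$, and the periods $\int_{\we}\sqrt{\varphi}$ of the hat-homology classes dual to the arcs of $\uA(\varphi)$ define a central charge $Z_{\varphi}$ with values in the upper half-plane $\mathbb{H}$ on the simples. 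I then set $\iota(\varphi)=(\h_{\uA(\varphi)},Z_{\varphi})\in\mathrm{U}(\h_{\uA(\varphi)})$.

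Next I would verify that $\iota$ is a holomorphic local isomorphism and that the chamberwise definitions glue. Identifying $K(\pvq)$ with the hat-homology $\widehat{H}_1(\varphi)$ of the collapsed surface, the period map $\on{Per}$ of \eqref{period map} agrees with the central-charge coordinate $Z\in\Hom_{\mathbb{Z}}(K(\pvq),\mathbb{C})$, so on each saddle-free chamber $\iota$ is a biholomorphism onto an open subset of $\mathrm{U}(\h)$. When two saddle-free chambers meet along a wall (a differential with a single saddle trajectory), the corresponding mixed-angulations differ by a flip, hence by \Cref{thm:EGiso} — using its compatibility with flips and simple tilting, together with \Cref{prop:refine_of_flip} — the hearts differ by a simple tilting; the matching of the two charts across the wall is then exactly the gluing $\mathrm{U}(\h)\cup\mathrm{U}(\h^{\sharp})$ from \cite{B}. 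Thus the local maps assemble into a holomorphic map $\iota\colon B_0^{\bullet}(\colwsur)\to\bigcup_{\h\in\EGb(\pvq)}\mathrm{U}(\h)\subseteq\on{Stab}(\pvq)$.

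Then I would extend $\iota$ to all of $\on{FQuad}^{\bullet}(\colwsur)$ by declaring $\iota(s\cdot\varphi)=s\cdot\iota(\varphi)$ and checking this is well defined using \Cref{lem:wall has ends}: since any path in $\on{FQuad}^{\bullet}(\colwsur)$ is homotopic rel endpoints to one in $B_2^{\bullet}$, and walls have ends, analytic continuation along such paths gives a consistent value, yielding $\iota\colon\on{FQuad}^{\bullet}(\colwsur)\to\on{Stab}^{\bullet}(\pvq)=\mathbb{C}\cdot\bigcup_{\h\in\EGb(\pvq)}\mathrm{U}(\h)$. For injectivity, distinct framed differentials are separated by their period points in at least one common chamber, and the rigidity of the framing reduces the argument to the corresponding statement in \cite{BMQS}. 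For surjectivity onto the principal component, every $\sigma\in\on{Stab}^{\bullet}(\pvq)$ is, after applying the $\mathbb{C}$-action, of the form $(\h,Z)$ with $\h\in\EGb(\pvq)=\EGb(\colwsur)$ and $Z$ valued in $\mathbb{H}$; reversing the first step constructs a saddle-free differential in $B_0^{\bullet}$ mapping to it. Finally $\iota$ is an isomorphism onto $\on{Stab}^{\bullet}(\pvq)$ because it is a holomorphic bijection admitting holomorphic local inverses from the chamberwise biholomorphisms.

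The hard part will be the global well-definedness of the extension in the third step around the non-generic strata: concretely, the case flagged in \Cref{lem:wall has ends}(2) where there is exactly one pole of order $-2$, and more seriously the genuinely new contributions of weight $-1$ decorations (simple poles) on $\colwsur$, which are absent from \cite{BMQS}. One must check that the local model of the horizontal foliation near a simple pole produces precisely the flips of monogon type (types III and IV of \Cref{prop:refine_of_flip}) and that the associated quotient hearts undergo the expected simple tilting under wall-crossing there, so that the gluing identities of the second step remain valid across those walls.
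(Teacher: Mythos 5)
Your proposal follows essentially the same route as the paper's (sketch) proof: define $\iota$ on the saddle-free locus via the mixed-angulation from the horizontal strip decomposition and the periods as central charge, use \Cref{thm:EGiso} to match flips with simple tiltings across walls, extend by the $\mathbb{C}$-action and the stratification/``walls have ends'' \Cref{lem:wall has ends} to the non-tame strata, and deduce injectivity and the isomorphism onto $\on{Stab}^{\bullet}(\pvq)$ from the graph isomorphism, exactly as in \cite{BS,BMQS,CHQ}. Your proposal is in fact somewhat more detailed than the paper's proof on the gluing and continuation steps, but it is the same argument.
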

\begin{proof}
The sketch proof follows \cite{BS,BMQS,CHQ}. 

At first, we define the map $\iota_0$ on the saddle-free locus $B_0^{\bullet}$. For any $(C,\we)\in \on{FQuad}^{\bullet}(\colwsur)$, the generic trajectories form a mixed-angulation on $\colwsur$. Dually, the saddle connections form an S-graph $\overline{\mathbb{S}}$ on $\colwsur$. By \Cref{thm:EGiso}, there exists a finite heart $\overline{\mathcal{H}}$ in $\pvq$ corresponding to $\overline{\mathbb{S}}$, which is in the principal part $\mathrm{EG}^{\bullet}(\pvq)$. For the stability function, we define $Z_0(S_i)=\on{Per}(\we_i)$, where $\we_i$ is the closed arc in $\colwsur$ corresponding to $S_i$. Now we get a stability condition $(Z_0,\overline{\mathcal{H}})$ in the principal conponent $\on{Stab}^{\bullet}(\pvq)$.

Then we extend $\iota_0$ to $\iota_2$ on the tame locus $B_2^{\bullet}$ using $\mathbb{C}$-action by (1) in \Cref{lem:wall has ends}. 

Moreover, for the non-tame differentials, we extend $\iota_{p-1}$ from $B_{p-1}^{\bullet}$ to $B_p^{\bullet}$ by (2) in \Cref{lem:wall has ends}, as Proposition 5.5 in \cite{BS} or Theorem 7.2 in \cite{BMQS}.

Finally, the map $\iota$ is injective and an isomorphism to the image $\on{Stab}^{\bullet}(\pvq)$ by \Cref{thm:EGiso}.
\end{proof}
%=========================================================
\appendix
%=========================================================
\section{Some technical proofs in \Cref{sec:P2}}\label{sec:X-cluster}
%=========================================================
\subsection{The explicit expressions of natural transformations $\eta_k$ and $\eta_k'$}\label{sec:A1}
%=========================================================
For $d\in\hh{N}_i$ and $k\in\Is$, the evaluation of $H_{\hh{V}}$ and $H_{\hh{N}_k}\circ F_k\circ G_{\phi(i)}$ on $d$ are given by the following diagrams respectively:
\[\begin{tikzpicture}[>=latex,scale=1.5]

    % 节点
    \node (F1G) at (3,3) {$F_1\circ G_i(d)$};
    \node (N1) at (3,1.5) {$\vdots$};
    \node (N2) at (3,2.5) {$\vdots$};
    \node (Gphi) at (1,2) {$G_{\phi(i)}(d)$};
    \node (FkG) at (3,2) {$F_k\circ G_i(d)$};
    \node (Zero) at (5,2) {$0$};
    \node (FmG) at (3,1) {$F_m\circ G_i(d)$};

    % 箭头
    \draw[->] (F1G) -- (Gphi);
    \draw[->] (F1G) -- (Zero);
    
    \draw[->] (FkG) -- (Gphi);
    \draw[->] (FkG) -- (Zero);
    
    \draw[->] (FmG) -- (Gphi);
    \draw[->] (FmG) -- (Zero);

\end{tikzpicture}\]
and
\[\begin{tikzpicture}[>=latex,scale=1.5]

    % 节点
    \node (F1G) at (3,3) {$0$};
    \node (N1) at (3,1.5) {$\vdots$};
    \node (N2) at (3,2.5) {$\vdots$};
    \node (Gphi) at (1,2) {$0$};
    \node (FkG) at (3,2) {$F_k\circ G_i(d)$};
    \node (Zero) at (5,2) {$0$.};
    \node (FmG) at (3,1) {$0$};

    % 箭头
    \draw[->] (F1G) -- (Gphi);
    \draw[->] (F1G) -- (Zero);
    
    \draw[->] (FkG) -- (Gphi);
    \draw[->] (FkG) -- (Zero);
    
    \draw[->] (FmG) -- (Gphi);
    \draw[->] (FmG) -- (Zero);

\end{tikzpicture}\]
Hence we can easily see the natural transformation $\eta_k: H_{\hh{N}_k}\circ F_k\circ G_{\phi(i)}\to H_{\hh{V}}$ upon each position.
Similarly, the evaluation of $H_{\hh{N}_k}\circ F_{\psi(k)}'\circ G_{\psi(k)}'$ and $H_{\hh{V}',k}$ on $d$ are given by the following diagrams respectively
\[\begin{tikzpicture}[>=latex,scale=1.5]

    % 节点
    \node (F1G) at (3,3) {$0$};
    \node (N1) at (3,1.5) {$\vdots$};
    \node (N2) at (3,2.5) {$\vdots$};
    \node (Gphi) at (1,2) {$0$};
    \node (FkG) at (3,2) {$F_{\psi(k)}'\circ G_{\psi(k)}'(d)$};
    \node (Zero) at (5,2) {$0$};
    \node (FmG) at (3,1) {$0$};

    % 箭头
    \draw[->] (F1G) -- (Gphi);
    \draw[->] (F1G) -- (Zero);
    
    \draw[->] (FkG) -- (Gphi);
    \draw[->] (FkG) -- (Zero);
    
    \draw[->] (FmG) -- (Gphi);
    \draw[->] (FmG) -- (Zero);

\end{tikzpicture}\]
and
\[\begin{tikzpicture}[>=latex,scale=1.5]

    % 节点
    \node (F1G) at (3,3) {$F_{\psi(1)}'\circ G_{\psi(k)}'(d)$};
    \node (N1) at (3,1.5) {$\vdots$};
    \node (N2) at (3,2.5) {$\vdots$};
    \node (Gphi) at (1,2) {$0$};
    \node (FkG) at (3,2) {$F_{\psi(k)}'\circ G_{\psi(k)}'(d)$};
    \node (Zero) at (5,2) {$G_{\psi(k)}'(d),$};
    \node (FmG) at (3,1) {$F_{\psi(m)}'\circ G_{\psi(k)}'(d) $};

    % 箭头
    \draw[->] (F1G) -- (Gphi);
    \draw[->] (F1G) -- (Zero);
    
    \draw[->] (FkG) -- (Gphi);
    \draw[->] (FkG) -- (Zero);
    
    \draw[->] (FmG) -- (Gphi);
    \draw[->] (FmG) -- (Zero);

\end{tikzpicture}\]
which implies the natural transformation $\eta'_k:H_{\hh{N}_k}\simeq H_{\hh{N}_k}\circ F_{\psi(k)}'\circ G_{\psi(k)}'\to H_{\hh{V}',k}$.
%=========================================================
\subsection{Checking that $\overline{G}_i$ factors through $\lim\F'$}\label{sec:A2}
%=========================================================
To verify that $\overline{G}_i$ factors through $\lim\F'$, it suffices to check that for each $d\in\hh{N}_i$, the evaluation of $\overline{G}_i(d)$ at each edge of $\Gs$ lies in $\lim\F'$, i.e. that the corresponding map is a Cartesian section. For the edge $v\to e_j$ where $j\in\Is$, we obtain that $\overline{G}_i(d)(v\to e_j)$ is the colimit of the following diagram
\begin{equation}\label{eq:check1}
\begin{tikzpicture}[>=latex,scale=1.2]

    % 第一行
    \node (Gd) at (3,3) {$G_{\phi(i)}(d)$};
    \node (FGd) at (10,3) {$F_j\circ G_{\phi(i)}(d)$};

    % 第二行
    \node (Z1) at (1,2) {$0$};
    \node (Z15) at (2,2) {$\cdots$};
    \node (Z2) at (3,2) {$0$};
    \node (Z25) at (4,2) {$\cdots$};
    \node (Z3) at (5,2) {$0$};
    \node (Z35) at (7,2) {$0$};
    \node (Z36) at (8,2) {$\cdots$};
    \node (Z4) at (10,2) {$F_j\circ G_{\phi(i)}(d)$};
    \node (Z45) at (12,2) {$\cdots$};
    \node (Z5) at (13,2) {$0.$};

    % 第三行
    \node (Z7) at (3,1) {$0$};
    \node (FGF) at (10,1) {$F_{\psi(j)}'\circ G_{\psi(m)}'\circ F_m\circ G_{\phi(i)}(d)$};
    
    % 箭头
    \draw[<-] (Gd) -- (Z2);
    \draw[<-] (Gd) -- (Z3);
    \draw[->] (FGd) -- (Gd);
    \draw[->] (Z1) -- (Gd);
    \draw[->] (Z1) -- (Z7);
    \draw[->] (Z2) -- (Z7);
    \draw[->] (Z3) -- (Z7);
    \draw[->] (Z4) -- (FGd) node[midway,right] {$\simeq$};
    \draw[->] (Z4) -- (FGF) node[midway,right] {$\alpha$};
    \draw[->] (Z35) -- (FGd);
    \draw[->] (Z5) -- (FGd);
    \draw[->] (Z35) -- (FGF);
    \draw[->] (Z5) -- (FGF);
    \draw[<-] (Z7) -- (FGF);
    \draw[->,red] (Z35)to[bend right=10] (Z1);
    \draw[->,blue] (Z4)to[bend right=10] (Z2);
    \draw[->,green] (Z5)to[bend right=10] (Z3);
\end{tikzpicture}
\end{equation}
Since $\alpha$ is equivalent to identity if $j=m$ and otherwise $\alpha$ is zero, then it
%for the edge $\hh{V}\to\hh{N}_j$ where $j\in I\backslash\Is$, the evaluation map sends $d\in\hh{N}_i$ to $G_i(d)\to F_j\circ G_i(d)$, which is a coCartesian. for the edge $\hh{V'}\to\hh{N}'_{j'}$ where $j'$ is around in $v'$ which is not be collapsed, the evaluation map sends $d\in\hh{N}_i$ to $G_{k'}'\circ F_k \circ G_i(d)\to F'_{j'}\circ G_{k'}'\circ F_k \circ G_i(d)$, which is a coCartesian.
can be simplified as $G_{\phi(i)}(d)\leftarrow F_j\circ G_{\phi(i)}(d)$, which is a Cartesian edge. Similarly, for the edge $v'\to e_{\psi(j)}'$ where $j\in\Is$, we have that $\overline{G}_i(d)(v'\to e_{\psi(j)}')$ is the colimit of the following diagram
\begin{equation}\label{eq:check2}
\begin{tikzpicture}[>=latex,scale=1.2]

    % 第一行
    \node (Gd) at (3,3) {$0$};
    \node (FGd) at (10,3) {$F_j\circ G_{\phi(i)}(d)$};

    % 第二行
    \node (Z1) at (1,2) {$0$};
    \node (Z15) at (2,2) {$\cdots$};
    \node (Z2) at (3,2) {$0$};
    \node (Z25) at (4,2) {$\cdots$};
    \node (Z3) at (5,2) {$0$};
    \node (Z35) at (7,2) {$0$};
    \node (Z36) at (8,2) {$\cdots$};
    \node (Z4) at (10,2) {$F_j\circ G_{\phi(i)}(d)$};
    \node (Z45) at (12,2) {$\cdots$};
    \node (Z5) at (13,2) {$0.$};

    % 第三行
    \node (Z7) at (3,1) {$G_{\psi(m)}'\circ F_{m} \circ G_{\phi(i)}d)$};
    \node (FGF) at (10,1) {$F_{\psi(j)}'\circ G_{\psi(m)}'\circ F_m\circ G_{\phi(i)}(d)$};
    
    % 箭头
    \draw[<-] (Gd) -- (Z2);
    \draw[<-] (Gd) -- (Z3);
    \draw[->] (FGd) -- (Gd);
    \draw[->] (Z1) -- (Gd);
    \draw[->] (Z1) -- (Z7);
    \draw[->] (Z2) -- (Z7);
    \draw[->] (Z3) -- (Z7);
    \draw[->] (Z4) -- (FGd) node[midway,right] {$\simeq$};
    \draw[->] (Z4) -- (FGF) node[midway,right] {$\beta$};
    \draw[->] (Z35) -- (FGd);
    \draw[->] (Z5) -- (FGd);
    \draw[->] (Z35) -- (FGF);
    \draw[->] (Z5) -- (FGF);
    \draw[<-] (Z7) -- (FGF);
    \draw[->,red] (Z35)to[bend right=10] (Z1);
    \draw[->,blue] (Z4)to[bend right=10] (Z2);
    \draw[->,green] (Z5)to[bend right=10] (Z3);
\end{tikzpicture}
\end{equation}
Similarly, since $\beta$ is equivalent to identity if $j=m$ and otherwise $\beta$ is zero, then it
can be simplified as $G_{\psi(m)}'\circ F_{m} \circ G_{\phi(i)}(d)\leftarrow F_{\psi(j)}'\circ G_{\psi(m)}'\circ F_{m}\circ G_{\phi(i)}(d)$, which is a Cartesian edge.
%=========================================================
\subsection{Proof of \Cref{prop:limscho}}\label{sec:A3}
%=========================================================
Before proving this, we mention the following lemma, which gives a way to reduce the computation of (co)limit diagram indexed by a complicated simplicial set to that indexed by its decomposed ingredients. The proof of the lemma can be found in Proposition 7.5.8.12 in \cite{KL}. We consider the following simplicial set
\[\begin{tikzcd}
        & \bullet \arrow[ld] \arrow[rd] &         \\
\bullet & \bullet \arrow[l] \arrow[r]   & \bullet, \\
        & \bullet \arrow[lu] \arrow[ru] &        
\end{tikzcd}\]
which is denoted by $K$. It can be decomposed to m pieces of $(\bullet\longleftarrow\bullet\longrightarrow\bullet)$, which is denoted by $K_i, 1\leq i\leq m$ respectively. Let $\C$ be the small category with objects $\{K_i, 1\leq i\leq m\}$ and all morphisms are generated by: 
\[\begin{tikzcd}
K_i: &\bullet \arrow[d, equal] & \bullet \arrow[r] \arrow[l] & \bullet \arrow[d] \\
K_{i+1}: &\bullet                       & \bullet \arrow[l] \arrow[r] & \bullet,          
\end{tikzcd}\]
where the upper row is $K_i$ and the lower one is $K_{i+1}$ for $1\leq i\leq m-1$.
\begin{lemma}\label{lem:deplim}
Let $\hh{A}$ be an $\infty$-category equipped with a diagram $q: K\to \hh{A}$. Then an object $X\in\hh{A}$ is a colimit of the diagram $q$ if and only if it is a colimit of $Q:N_{\bullet}(\C)\to\hh{A}$. In other words, if the following diagram
\[\begin{tikzcd}
  & Y_1 \arrow[ld] \arrow[rd] &   \\
X & Y_2 \arrow[l] \arrow[r]   & Z \\
  & Y_m \arrow[lu] \arrow[ru] &  
\end{tikzcd}\]
admits a coproduct, then the coproduct in $\hh{A}$ is equivalent to the pushout of 
\[\begin{tikzcd}
Y_1\oplus\cdots\oplus Y_m \arrow[r] \arrow[d] & Z \\
X                                             &  
\end{tikzcd}\]
with $X, Y_i, Z\in\hh{A}, 1\leq i\leq m$.
\end{lemma}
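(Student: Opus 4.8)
The plan is to obtain \Cref{lem:deplim} as an instance of the decomposition principle for colimits in $\infty$-categories, Proposition 7.5.8.12 in \cite{KL}, after exhibiting $K$ explicitly as a colimit of the subdiagrams $K_i$, and then to rewrite the resulting iterated colimit as the single pushout in the statement using the additive structure of $\hh{A}$ (in the applications, a stable $\infty$-category such as $\hh{L}$ or a full stable subcategory of it). First I would record the elementary combinatorial fact underlying everything: each $K_i$ is the span on the three vertices $\{X,Y_i,Z\}$, the inclusions of the two feet give maps $\{X\}\sqcup\{Z\}\hookrightarrow K_i$, and $K$ is obtained from $\coprod_{i=1}^m K_i$ by identifying all copies of $X$ with one another and all copies of $Z$ with one another. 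Packaging this gluing as an iterated pushout along $\{X\}\sqcup\{Z\}$ is exactly what the tautological diagram $N_\bullet(\C)\to\mathrm{Set}_\Delta$, $K_i\mapsto K_i$, with transition maps the identity on the glued vertices, encodes; its colimit is $K$.

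With that presentation in hand, the first assertion of the lemma follows verbatim from Proposition 7.5.8.12 in \cite{KL}: applied to $q\colon K\to\hh{A}$ and to the decomposition $K=\mathrm{colim}_{N_\bullet(\C)}K_i$, it states that an object of $\hh{A}$ is a colimit of $q$ if and only if it is a colimit of the induced diagram $Q\colon N_\bullet(\C)\to\hh{A}$. Since $\hh{A}$ is stable, every finite colimit occurring here exists, so the hypothesis ``$q$ admits a colimit'' is automatic and the two descriptions of $\mathrm{colim}(q)$ may be used interchangeably.

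It then remains to identify $\mathrm{colim}(Q)$ with the pushout of $X\leftarrow Y_1\oplus\cdots\oplus Y_m\to Z$. Here one uses that in a stable $\infty$-category gluing the spans $X\leftarrow Y_i\to Z$ one at a time along their feet is the same as forming a single pushout along the biproduct: the iterated pushout is the cofiber of the ``sum'' map $\bigoplus_i Y_i\to X\oplus Z$, and so is the one-step pushout, the comparison being a short induction on $m$ using the octahedral axiom (the case $m=1$ is tautological, and the inductive step identifies $\mathrm{cofib}(Y_{m}\to P_{m-1})$ with $\mathrm{cofib}(\bigoplus_i Y_i\to X\oplus Z)$, where $P_{m-1}$ is the pushout formed from the first $m-1$ spans). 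The only genuinely non-formal point, and the step I expect to be the main obstacle, is the first one: writing down an honest colimit presentation of the simplicial set $K$ whose transition maps are identities on the subobjects being glued, so that the hypotheses of Proposition 7.5.8.12 in \cite{KL} are literally satisfied; once that bookkeeping is carried out, the remaining arguments are routine.
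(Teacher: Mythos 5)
Your proposal follows essentially the same route as the paper, which simply deduces the lemma from Proposition 7.5.8.12 of \cite{KL}; you additionally spell out the gluing presentation of $K$ along $\{X\}\sqcup\{Z\}$ and the identification of the iterated pushout with the single pushout over $Y_1\oplus\cdots\oplus Y_m$, both of which are correct in the stable setting where the lemma is applied. No gaps.
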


Now we back to the proof. To prove \Cref{prop:limscho}, we will check the five conditions in \Cref{def:schberngon} one by one with $i\in\overline{I}$.

\textbf{1. $\overline{F}_i$ and $\overline{G}_i$ are adjoints to each other. Moreover, the counit $\overline{F}_i\circ \overline{G}_i\to \on{id}_{\hh{N}_i}$ is an equivalence.}

We only need to verify the statement in the case when $i\in\overline{I}$ where $\phi(i)\in I\backslash\Is$ and the other case is similar. Since that exact functors preserve finite colimit, so we can form the left adjoint of $\overline{G}_i$ via the left adjoints at each position of \eqref{eq:lim} and then taking the colimit. The left adjoint of $H_{\hh{V}}$ is given by
\begin{equation}\label{eq:lahv}
\hh{L}\xrightarrow{\res_{\hh{V}}}\hh{V}\xrightarrow{F_{\phi(i)}}\hh{N}. 
\end{equation}
For $k\in\Is$, the left adjoint of $H_{\hh{N}_k}\circ F_k\circ G_{\phi(i)}$ is given by 
\begin{equation}\label{eq:lahn}
\hh{L}\xrightarrow{\res_{\hh{N}_k}}\hh{N}\xrightarrow{^LF_k}\hh{V}\xrightarrow{F_{\phi(i)}}\hh{N}.
\end{equation}
The left adjoint of $H_{\hh{V}',1}\circ F_1\circ G_{\phi(i)}$ is given by 
\begin{equation}\label{eq:lahv'}
\hh{L}\xrightarrow{\res_{\hh{V'}}}\hh{V'}\xrightarrow{F'_{\psi(1)}}\hh{N}\xrightarrow{^LF_1}\hh{V}\xrightarrow{F_{\phi(i)}}\hh{N}.
\end{equation}
By combining \Cref{lem:deplim} with the fact that \eqref{eq:lahn} and \eqref{eq:lahv'} are equivalent when restricted to Cartesian sections, we deduce that the left adjoint of $\overline{G}_i$ is \eqref{eq:lahv}, which is equivalent to $\overline{F}_i$. Moreover, since $\overline{F}_i$ preserves colimit, the composition of $\overline{F}_i$ with $\overline{G}_i$ can be given by composing $\overline{F}_i$ with the functor at each position of \eqref{eq:lim} and then taking the limit. We have that
\[\overline{F}_i\circ H_{\hh{V}}: \hh{N}\xrightarrow{G_{\phi(i)}}\hh{V}\to\hh{E}_{\hh{V}}\subseteq\hh{L}\xrightarrow{\res_{\hh{V}}}\hh{V}\xrightarrow{F_{\phi(i)}}\hh{N}\]
is equivalent to $\id_{\hh{N}}$, since the composition of the middle three maps is equivalent to $\id_{\hh{V}}$ and $F_{\phi(i)}\circ G_{\phi(i)}\simeq \id_{\hh{N}}.$ For any $k\in \Is$, we also have that
\[\overline{F}_i\circ H_{\hh{N}_k}\circ F_k\circ G_{\phi(i)}:\hh{N}\xrightarrow{G_{\phi(i)}}\hh{V}\xrightarrow{F_k}\hh{N}\to\hh{E}_{k}\subseteq\hh{L}\xrightarrow{\res_{\hh{V}}}\hh{V}\xrightarrow{F_{\phi(i)}}\hh{N}\]
and
\[\overline{F}_i\circ H_{\hh{V'},m}\circ F_m\circ G_{\phi(i)}: \hh{N}\xrightarrow{G_{\phi(i)}}\hh{V}\xrightarrow{F_m}\hh{N}\xrightarrow{G_{\psi(m)}'}\hh{V'}\to\hh{E}_{\hh{V'}}\subseteq\hh{L}\xrightarrow{\res_{\hh{V}}}\hh{V}\xrightarrow{F_{\phi(i)}}\hh{N}\]
are equivalent. By \Cref{lem:deplim}, we deduce that
\[\overline{F}_i\circ\overline{G}_i\simeq\id_{\hh{N}}.\] 
\textbf{2. $\overline{F}_i\circ\overline{G}_{i+1}$ is an equivalence of $\infty$-categories.}

If both $\phi(i)$ and $\phi(i+1)$ are in $I\backslash\Is$ or $I'\backslash\Is'$, the proof is the same as in part 1. Otherwise, we can still get $\overline{F}_i\circ\overline{G}_{i+1}$ by each position in the colimit diagram \eqref{eq:lim}. We may assume that $\phi(i)\in I'\backslash\Is'$ and $\phi(i+1)\in I\backslash\Is$. We have that
\[\overline{F}_i\circ H_{\hh{V}}: \hh{N}\xrightarrow{G_{\phi(i+1)}}\hh{V}\to\hh{E}_{\hh{V}}\subseteq\hh{L}\xrightarrow{\res_{\hh{V'}}}\hh{V'}\xrightarrow{F'_{\phi(i)}}\hh{N}\]
and
\[\overline{F}_i\circ H_{\hh{N}_k}\circ F_k\circ G_{\phi(i+1)}:\hh{N}\xrightarrow{G_{\phi(i+1)}}\hh{V}\xrightarrow{F_k}\hh{N}\to\hh{E}_{k}\subseteq\hh{L}\xrightarrow{\res_{\hh{V}'}}\hh{V}'\xrightarrow{F'_{\phi(i)}}\hh{N}\]
are equivalent for any $k\in\Is$. Also, we have that
\[\overline{F}_i\circ H_{\hh{V'},m}\circ F_m\circ G_{\phi(i+1)}: \hh{N}\xrightarrow{G_{\phi(i+1)}}\hh{V}\xrightarrow{F_m}\hh{N}\xrightarrow{G_{\psi(m)}'}\hh{V'}\to\hh{E}_{\hh{V'}}\subseteq\hh{L}\xrightarrow{\res_{\hh{V}'}}\hh{V'}\xrightarrow{F'_{\phi(i)}}\hh{N}\]
is an equivalence since $m=\phi(i+1)-1$ and in this case $\psi(m)=\phi(i)+1$. Since the composition from the second-to-last map to the fourth-to-last map is equivalent to $\id_{\hh{V}'}$ by construction and both $F_{\phi(i+1)-1}\circ G_{\phi(i+1)}$ and $F'_{\phi(i)}\circ G'_{\phi(i)+1}$ are equivalences, we deduce that
$\overline{F}_i\circ\overline{G}_{i+1}$ is an equivalence of $\infty$-categories.   

\textbf{3. $\overline{F}_i\circ\overline{G}_j\simeq 0$ if $j$ does not equal to $i$ or $i+1$.}

The proof is similar to part 1 and part 2. If both $\phi(i)$ and $\phi(j)$ are in $I\backslash\Is$ or $I'\backslash\Is'$, the proof is the same as in part 1, where $\overline{F}_i\circ\overline{G}_j$ is equivalent to $\overline{F}_i\circ H_{\hh{V}}$, which is zero if $j\neq i, i + 1$. Otherwise, we assume that $\phi(i)\in I'\backslash\Is'$ and $\phi(j)\in I\backslash\Is$, then the proof is same as in part 2. We have that $\overline{F}_i\circ\overline{G}_j$ is equivalent to
\[\overline{F}_i\circ H_{\hh{V'},m}\circ F_m\circ G_{\phi(j)}: \hh{N}\xrightarrow{G_{\phi(j)}}\hh{V}\xrightarrow{F_m}\hh{N}\xrightarrow{G_{\psi(m)}'}\hh{V'}\to\hh{E}_{\hh{V'}}\subseteq\hh{L}\xrightarrow{\res_{\hh{V}'}}\hh{V'}\xrightarrow{F'_{\phi(i)}}\hh{N},\]
which is equivalent to zero since that at least one of $F_m\circ G_{\phi(i)}$ and $G_{\psi(m)}'\circ F_{\phi(i)}'$ is equivalent to zero from the conditions. Therefore, $\overline{F}_i\circ\overline{G}_j$ is equivalent to zero if $j$ does not equal to $i$ or $i+1$.

\textbf{4. $\overline{G}_i$ has a right adjoint $(\overline{G}_i)^R$ and $\overline{F}_i$ has a left adjoint $^L\overline{F}_i$.} 

The right adjoint of $\overline{G}_i$ can be obtained by forming the right adjoints at each position of \eqref{eq:lim} and then taking the colimit. To construct the left adjoint $^L\overline{F}_i$ of $\overline{F}_i$, we use the fact that $\lim\F'$ is equivalent to the coCartesian sections of covariant Grothendieck construction. We first refine the definitions of $\hh{E}_{\hh{V}},\hh{E}_k$ and $\hh{E}_{\hh{V'}}$ by taking the corresponding relative left Kan extension and then replace $G_{\phi(i)}$ or $G_{\phi(i)}'$ with the left adjoints of $F_{\phi(i)}$ or $F_{\phi(i)}'$ respectively as described in \Cref{const:collapadj}. Consequently, the left adjoint is defined as the colimit of a diagram analogous to \eqref{eq:lim}. The verification follows similarly to part 1.

\textbf{5. $\im(\overline{G}_{i+1})=\im(^L\overline{F}_i)$ as full subcategories of $\lim\F'$.}

The image can be evaluated pointwise in \eqref{eq:lim} and then taking the colimit. If both $\phi(i)$ and $\phi(i+1)$ are in $I\backslash\Is$ or $I'\backslash\Is'$, it holds because of the definition of perverse schobers on the $n$-spider $v$ and $n'$-spider $v'$ respectively. Otherwise, we may assume that $\phi(i)\in I'\backslash\Is'$ and $\phi(i+1)\in I\backslash\Is$. We consider the image of $\overline{G}_{i+1}$ and $^L\overline{F}_i$ as follows. For $X\in\hh{N}$, we have that $\overline{G}_{i+1}(X)$ is represented by the local diagram:
\[\tiny \begin{tikzcd}[column sep=10pt]
            &                  & \begin{tabular}{c} $F_1\circ G_{\phi(i+1)}(X)$\\$\simeq 0$\end{tabular} \arrow[ld] \arrow[rd] &                                             &                                                                          \\
\begin{tabular}{c} $X\simeq$\\ $F_{\phi(i+1)}\circ G_{\phi(i+1)}(X)$\end{tabular}  \arrow[r] & G_{\phi(i+1)}(X) & \begin{tabular}{c} $F_j\circ G_{\phi(i+1)}(X)$\\$\simeq 0$\end{tabular} \arrow[l] \arrow[r]                                                               & \begin{tabular}{c} $G'_{\psi(m)}\circ F_m $\\$ \circ G_{\phi(i+1)}(X)$\end{tabular} & \begin{tabular}{c} $F_{\phi(i)}'\circ G'_{\psi(m)}\circ$\\$ F_m\circ G_{\phi(i+1)}(X).$\end{tabular} \arrow[l] \\
            &                  & \begin{tabular}{c} $ F_m\circ G_{\phi(i+1)}(X)\simeq $\\ $F_{\psi(m)}'\circ G'_{\psi(m)}\circ F_m\circ G_{\phi(i+1)}(X) $\end{tabular} \arrow[lu] \arrow[ru]                                                             &                                             &                                                                         
\end{tikzcd}\]
For $Y\in\hh{N}$, we have that $^L\overline{F}_i(Y)$ is represented by the local diagram:
\[\tiny\begin{tikzcd}
                                                                &                                                                                        & F'_{\psi(1)}\circ \prescript{L}{}F_{\phi(i)}'(Y)\simeq 0                                                     &                                                             &                                               \\
\begin{tabular}{c} $F_{\phi(i+1)}\circ G_m\circ$\\$ F'_{\psi(m)}\circ \prescript{L}{}F_{\phi(i)}'(Y)$\end{tabular} & \begin{tabular}{c} $G_m\circ F'_{\psi(m)}$\\$\circ \prescript{L}{}F_{\phi(i)}'(Y)$\end{tabular} \arrow[ru] \arrow[l] \arrow[r] \arrow[rd] & F'_{\psi(j)}\circ \prescript{L}{}F_{\phi(i)}'(Y)\simeq 0                                                     & \prescript{L}{}F_{\phi(i)}'(Y) \arrow[lu] \arrow[l] \arrow[r] \arrow[ld] & F'_{\psi(i)}\circ \prescript{L}{}F_{\phi(i)}'(Y) \simeq Y. \\
                                                                &                                                                                        & \begin{tabular}{c} $F_m\circ G_m\circ F'_{\psi(m)}\circ \prescript{L}{}F_{\phi(i)}'(Y)$\\$\simeq F'_{\psi(m)}\circ \prescript{L}{}F_{\phi(i)}'(Y)$\end{tabular} &                                                             &                                              
\end{tikzcd}\]
If we assume that $F_{\phi(i)}'\circ G'_{\psi(m)}\circ F_m\circ G_{\phi(i+1)}(X)\simeq Y$, then we have
\[^LF_{\phi(i)}'\circ F_{\phi(i)}'\circ G'_{\psi(m)}\circ F_m\circ G_{\phi(i+1)}(X)\simeq~ ^LF_{\phi(i)}'(Y).\]
Since $^LF_{\phi(i)}'$ is fully faithful, we have the equivalence
\[^LF_{\phi(i)}'\circ F_{\phi(i)}'\circ~^LF_{\phi(i)}'\simeq~^LF_{\phi(i)}',\]
where implies that the counit map $^LF_{\phi(i)}'\circ F_{\phi(i)}'\to \id$ is an equivalence on $\im(^LF_{\phi(i)}')$. By the assumption that $\im(^LF_{\phi(i)}')=\im(G'_{\phi(i)+1})=\im(G'_{\psi(m)})$, we obtain the following
\[^LF_{\phi(i)}'(Y)\simeq~^LF_{\phi(i)}'\circ F_{\phi(i)}'\circ G'_{\psi(m)}\circ F_m\circ G_{\phi(i+1)}(X)\simeq G'_{\psi(m)}\circ F_m\circ G_{\phi(i+1)}(X).\]
By similar reason, we have the equivalences at other positions. Therefore, we have $\im(\overline{G}_{i+1})$ and $\im(^L\overline{F}_i)$.

Combining 1-5 above, we obtain the \Cref{prop:limscho}.
%=========================================================
\section{Flipping and simple tilting of type II, III and IV}\label{sec:B}
%=========================================================
In this section, we illustrate the flipping process of \Cref{prop:refine_of_flip} for type III and type IV by an example in the following figures and summarize the step-by-step simple tilting processes of types II and III in the following tables.

In \Cref{fig:3.4.3} and \Cref{tab:1}, the arc obtained by counting clockwise from $\overline{\gamma}$ that forms a bigon together with $\overline{\gamma}$ may be either a boundary segment of $\colwsur$ or an open arc in $\colwsur$. We consider the case where it is an open arc, since the boundary case is simpler and similar. In this case, we denote this open arc by $\overline{\xi}$, whose dual closed arc is $\overline{\delta}$ and the corresponding object is $\overline{T}$. Let $S_i$ be the object associated to the dual arc of $\wg_i$. After the sequence of tilting, we have $S_1,\ldots,S_m$ are in $\pvd(\subsur)$ and the images of $S_m'[1]$ and $T'$ in $\D(\colwsur)$ are isomorphic $\overline{S}[1]$ and $\phi_{\overline{S}}(\overline{T})$ respectively. In \Cref{tab:2} (resp. \Cref{tab:3}), we have that $S_1$ is (resp. $S_1$ and $S_2$ are) in $\pvd(\subsur)$ and the images of $S_2'[1]$ (resp. $S_1'[1]$) and $X''$ in $\D(\colwsur)$ are isomorphic $\overline{S}[1]$ and $\phi_{\overline{S}}(\overline{X})$ respectively.

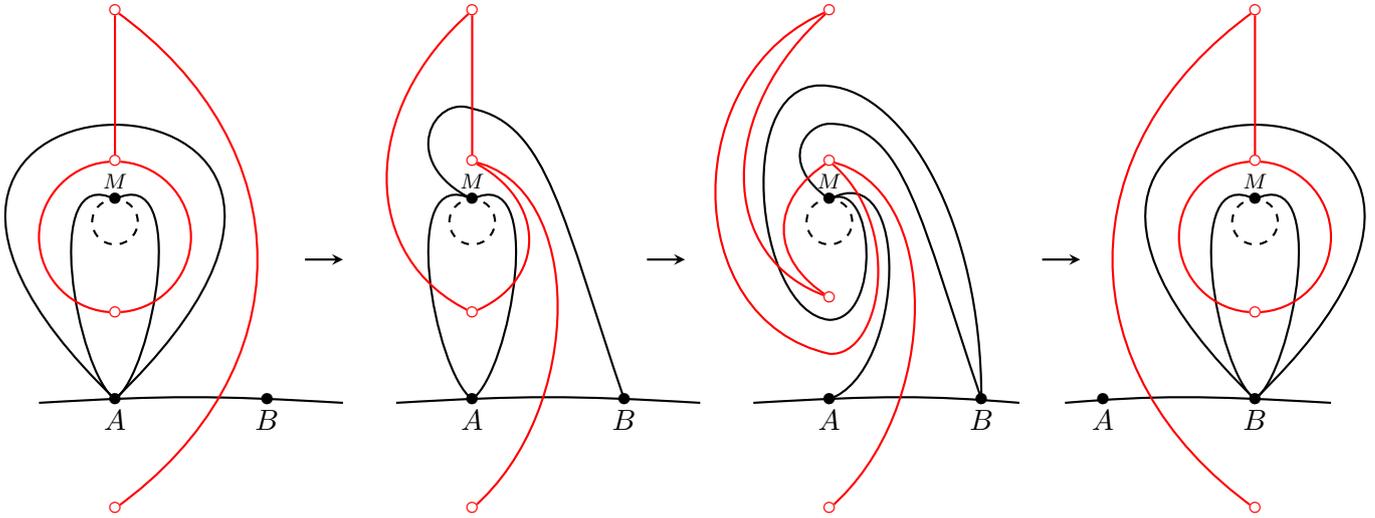
\begin{figure}[ht]\centering
  \makebox[\textwidth][cc]{
\begin{tikzpicture}[scale=1]
\begin{scope}[shift={(-5,0)}]
\draw[thick]
    (-1,-.9) .. controls (1-0.3,-0.8) and (1.3,-0.8) .. (3,-.9);
  \draw[thick]
    (0,-0.85) 
      .. controls (-5,4) and (5,4) .. (0,-0.85);
    % \draw (0,3.1) node{$\wg$} ;
\draw[thick](0,1.8)\nn node[above,font=\scriptsize]{$M$};
\draw[thick] (0,-.85)\nn node[below]{$A$};
\draw[thick] (2,-.85)\nn node[below]{$B$};
\draw[thick] (0,-.85) .. controls (-.5,-.5) and +(-1,.5) .. (0,1.8);
\draw[thick] (0,-.85) .. controls (.5,-.5) and +(1,.5) .. (0,1.8);
\draw[thick,dashed] (0,1.5) circle (0.3);
\draw[thick,red] (0,1.3) circle (1);
\draw[thick,red] (0,2.3) -- (0,4.3);
\draw[thick,red] (0,-2.3) .. controls (2.5,-.5) and (2.5,2.5) .. (0,4.3);
\draw(0,.3)\ww(0,2.3)\ww (0,4.3)\ww (0,-2.3)\ww;
%\draw (-0.7,2.2) node{$\wg_1$}; 
%\draw (0.7,2.2) node{$\wg_2$}; 
\draw(2.5,1)edge[thick,->,>=stealth](3,1);
\begin{scope}[shift={(4.7,0)}]
\draw[thick]
    (-1,-.9) .. controls (1-0.3,-0.8) and (1.3,-0.8) .. (3,-.9);
\draw[thick](0,1.8)\nn node[above,font=\scriptsize]{$M$};
\draw[thick] (0,-.85)\nn node[below]{$A$};
\draw[thick] (2,-.85)\nn node[below]{$B$};
\draw[thick]
        (0,1.8)
        .. controls (-1,2.3) and (-0.5,3.2) ..
        (0,3)
        .. controls (1,2.8) and (1.2,1.5) ..
        (2,-0.85);
\draw[thick] (0,-.85) .. controls (-.5,-.5) and +(-1,.5) .. (0,1.8);
\draw[thick] (0,-.85) .. controls (.5,-.5) and +(1,.5) .. (0,1.8);
\draw[thick,dashed] (0,1.5) circle (0.3);
%\draw[thick,red] (0,2.3) arc[start angle=90,end angle=-90,radius=1];
\draw[thick,red] (0,2.3) -- (0,4.3);
\draw[thick,red] (0,.3) .. controls (1,.7) and (1,1.8) .. (0,2.3);
\draw[thick,red] (0,-2.3) .. controls (1.5,-1) and (1.5,2) .. (0,2.3);
\draw[thick,red] (0,.3) .. controls (-1.5,1) and (-1.5,3) .. (0,4.3);
\draw(0,.3)\ww(0,2.3)\ww (0,4.3)\ww (0,-2.3)\ww;
\end{scope}
\draw(7,1)edge[thick,->,>=stealth](7.5,1);
\begin{scope}[shift={(9.4,0)}]
\draw[thick]
    (-1,-.9) .. controls (1-0.3,-0.8) and (1.3,-0.8) .. (2.5,-.9);
\draw[thick](0,1.8)\nn node[above,font=\scriptsize]{$M$};
\draw[thick] (0,-.85)\nn node[below]{$A$};
\draw[thick] (2,-.85)\nn node[below]{$B$};
\draw[thick]
        (0,1.8)
        .. controls (-.7,2.3) and (-0.3,2.8) ..
        (0,2.8)
        .. controls (1,2.8) and (1.2,1.5) ..
        (2,-0.85);
\draw[thick]
        % 第一段：从(0,1.8)到(0,0)，右弯
        (0,1.8) .. controls +(.6,.3) and (.7,0.2) .. (0,0.2)
        % 第二段：从(0,0)到(0,3.5)，左弯
        .. controls (-1,0.3) and (-1.3,3.5) .. (0,3.3)
        % 第三段：从(0,3.5)到(2,-0.85)，右弯
        .. controls (1,3.2) and (2,1.5) .. (2,-0.85);
\draw[thick] (0,-.85) .. controls (1,-.5) and +(1.1,.6) .. (0,1.8);
\draw[thick,dashed] (0,1.5) circle (0.3);
%\draw[thick,red] (0,2.3) arc[start angle=90,end angle=-90,radius=1];
\draw[thick,red]
        % 第一段：向右弯，经过 (0,-0.3)
        (0,2.3) .. controls (1,1.5) and (.7,-0.3) .. (0,-0.25)
        % 第二段：向左弯，到 (0,4.3)
        .. controls (-2,.2) and (-2,3.5) .. (0,4.3);
\draw[thick,red] (0,.5) .. controls (-.8,1) and (-.8,1.8) .. (0,2.3);
\draw[thick,red] (0,-2.3) .. controls (1.5,-1) and (1.5,2) .. (0,2.3);
\draw[thick,red] (0,.5) .. controls (-1.5,1) and (-1.5,3) .. (0,4.3);
\draw(0,.5)\ww(0,2.3)\ww (0,4.3)\ww (0,-2.3)\ww;
\end{scope}
\draw(12.2,1)edge[thick,->,>=stealth](12.7,1);
\begin{scope}[shift={(13,0)}]
\draw[thick]
    (-.5,-.9) .. controls (1-0.3,-0.8) and (1.3,-0.8) .. (3,-.9);
\draw[thick] (0,-.85)\nn node[below]{$A$};
\draw[thick] (2,-.85)\nn node[below]{$B$};
\end{scope}
\begin{scope}[shift={(15,0)}]
  \draw[thick]
    (0,-0.85) 
      .. controls (-5,4) and (5,4) .. (0,-0.85);
 %    \draw (0,3.1) node{$\wg$} ;
\draw[thick](0,1.8)\nn node[above,font=\scriptsize]{$M$};
\draw[thick] (0,-.85) .. controls (-.5,-.5) and +(-1,.5) .. (0,1.8);
\draw[thick] (0,-.85) .. controls (.5,-.5) and +(1,.5) .. (0,1.8);
\draw[thick,dashed] (0,1.5) circle (0.3);
\draw[thick,red] (0,1.3) circle (1);
\draw[thick,red] (0,2.3) -- (0,4.3);
\draw[thick,red] (0,-2.3) .. controls (-2.5,-.5) and (-2.5,2.5) .. (0,4.3);
\draw(0,.3)\ww(0,2.3)\ww (0,4.3)\ww (0,-2.3)\ww;
%\draw (-0.7,2.2) node{$\wg_1$}; 
%\draw (0.7,2.2) node{$\wg_2$}; 
\end{scope}
\end{scope}
\end{tikzpicture}}
    \caption{The forward flip of type III, where there are $\wg_1$ and $\wg_2$}
    \label{fig:1}
\end{figure}
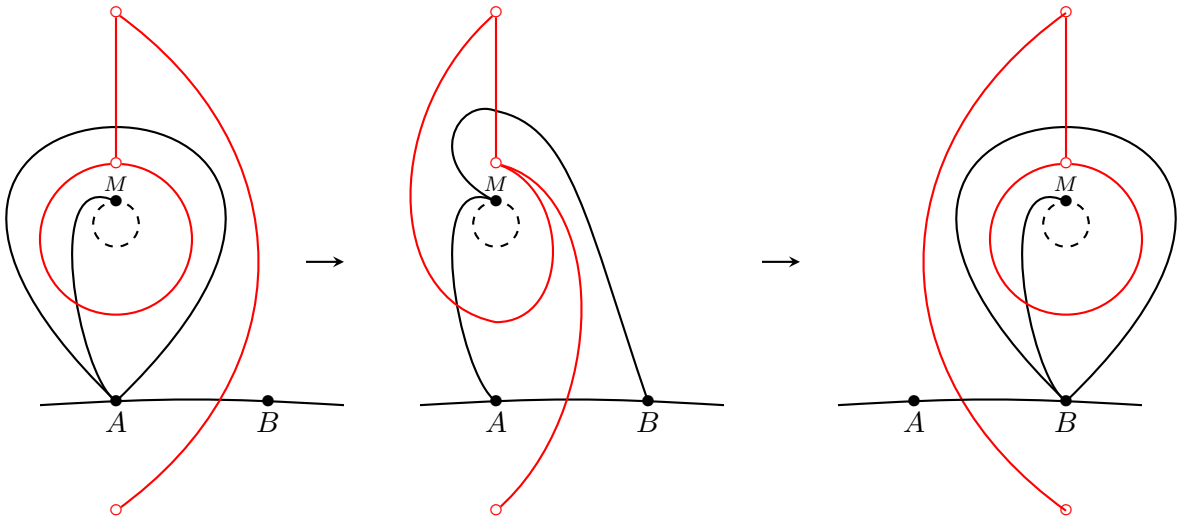
\begin{figure}[ht]\centering
\makebox[\textwidth][c]{
\begin{tikzpicture}[scale=1]
\draw[thick]
    (-1,-.9) .. controls (1-0.3,-0.8) and (1.3,-0.8) .. (3,-.9);
  \draw[thick]
    (0,-0.85) 
      .. controls (-5,4) and (5,4) .. (0,-0.85);
%     \draw (0,3.1) node{$\wg$} ;
\draw[thick](0,1.8)\nn node[above,font=\scriptsize]{$M$};
\draw[thick] (0,-.85)\nn node[below]{$A$};
\draw[thick] (2,-.85)\nn node[below]{$B$};
\draw[thick] (0,-.85) .. controls (-.5,-.5) and +(-1,.5) .. (0,1.8);
\draw[thick,dashed] (0,1.5) circle (0.3);
\draw[thick,red] (0,1.3) circle (1);
\draw[thick,red] (0,2.3) -- (0,4.3);
\draw[thick,red] (0,-2.3) .. controls (2.5,-.5) and (2.5,2.5) .. (0,4.3);
\draw(0,2.3)\ww (0,4.3)\ww (0,-2.3)\ww;
%\draw (-0.7,2.2) node{$\wg_1$}; 
%\draw (0.7,2.2) node{$\wg_2$}; 
\draw(2.5,1)edge[thick,->,>=stealth](3,1);
\begin{scope}[shift={(5,0)}]
\draw[thick]
    (-1,-.9) .. controls (1-0.3,-0.8) and (1.3,-0.8) .. (3,-.9);
\draw[thick](0,1.8)\nn node[above,font=\scriptsize]{$M$};
\draw[thick] (0,-.85)\nn node[below]{$A$};
\draw[thick] (2,-.85)\nn node[below]{$B$};
\draw[thick]
        (0,1.8)
        .. controls (-1,2.3) and (-0.5,3.2) ..
        (0,3)
        .. controls (1,2.8) and (1.2,1.5) ..
        (2,-0.85);
\draw[thick] (0,-.85) .. controls (-.5,-.5) and +(-1,.5) .. (0,1.8);
\draw[thick,dashed] (0,1.5) circle (0.3);
%\draw[thick,red] (0,2.3) arc[start angle=90,end angle=-90,radius=1];
\draw[thick,red] (0,2.3) -- (0,4.3);
\draw[thick,red] (0,2.3) .. controls (1,2) and (1,.2) .. (0,.2).. controls (-1.5,.5) and (-1.5,3) .. (0,4.3);
\draw[thick,red] (0,-2.3) .. controls (1.5,-1) and (1.5,2) .. (0,2.3);
\draw(0,2.3)\ww (0,4.3)\ww (0,-2.3)\ww;
\end{scope}
\draw(8.5,1)edge[thick,->,>=stealth](9,1);
\begin{scope}[shift={(12.5,0)}]
  \draw[thick]
    (0,-0.85) 
      .. controls (-5,4) and (5,4) .. (0,-0.85);
\draw[thick] (0,-.85) .. controls (-.5,-.5) and +(-1,.5) .. (0,1.8);
\draw[thick,dashed] (0,1.5) circle (0.3);
\draw[thick,red] (0,1.3) circle (1);
\draw[thick,red] (0,2.3) -- (0,4.3);
\draw(0,2.3)\ww (0,4.3)\ww (0,-2.3)\ww;
\draw[thick](0,1.8)\nn node[above,font=\scriptsize]{$M$};
\draw[thick,red] (0,-2.3) .. controls (-2.5,-.5) and (-2.5,2.5) .. (0,4.3);
\end{scope}
\begin{scope}[shift={(10.5,0)}]
\draw[thick]
    (-1,-.9) .. controls (1-0.3,-0.8) and (1.3,-0.8) .. (3,-.9);
\draw[thick] (0,-.85)\nn node[below]{$A$};
\draw[thick] (2,-.85)\nn node[below]{$B$};
\end{scope}
\end{tikzpicture}}
    \caption{The forward flip of type III, where there is $\wg_1$}
    \label{fig:2}
\end{figure}

\begin{figure}\centering
 \makebox[\textwidth][c]{
\begin{tikzpicture}[xscale=1.1]
\begin{scope}[shift={(-15,0)}]
\draw[thick]
    (-0.5,-.9) .. controls (1-0.3,-0.8) and (1.3,-0.8) .. (2.5,-.9);
  \draw[thick,cyan]
    (0,-0.85) 
      .. controls (-4,4) and (4,4) .. (0,-0.85);
\draw[thick,orange]  (0,-0.85)  .. controls (.55,1.5) and (1.5,1.3) .. (0,-0.85);
\draw[thick,red]  (0,-0.85)  .. controls (-2.2,2.5) and (.5,2.4) .. (0,-0.85);
\filldraw[fill=white,draw=none] (-.2,1.25) circle (0.15);
\draw[thick]  (0,-0.85)  .. controls (-1,2.4) and (1,2.4) .. (0,-0.85);
\draw (0,3.1) node{$\wg=\wg_0$} ;
\draw[thick] (0,-.85)\nn node[below]{$A$};
\draw[thick] (2,-.85)\nn node[below]{$B$};

\draw[-stealth] (3.5,0) -- (4.5,0);
\draw (4,0) node[above]{$\wg_0$};

\begin{scope}[shift={(6.3,0)}]
\draw[thick]
    (-0.5,-.9) .. controls (1-0.3,-0.8) and (1.3,-0.8) .. (2.5,-.9);
  \draw[thick,cyan]
    (0,-0.85) 
      .. controls (.5,3) and (4,3) .. (2,-0.85);
\filldraw[fill=white,draw=none] (.7,1.1) circle (0.15);
\draw[thick,orange]  (0,-0.85)  .. controls (1,1) and (3,1) .. (0,-0.85);
\draw[thick,red]  (0,-0.85)  .. controls (-2.2,2.5) and (.5,2.4) .. (0,-0.85);
\filldraw[fill=white,draw=none] (-.1,1.1) circle (0.15);
\draw[thick]  (0,-0.85)  .. controls (-1,2.4) and (2,2.4) .. (0,-0.85);
\draw[thick] (0,-.85)\nn node[below]{$A$};
\draw[thick] (2,-.85)\nn node[below]{$B$};
\draw (-.5,1.6) node[above]{$\wg_1$};
\end{scope}

\draw[-stealth] (-2,-4.5) -- (-1,-4.5);
\draw (-1.5,-4.5) node[above]{$\wg_1$};
%\draw (-2,-4.5) node[below]{the red arc};

\begin{scope}[shift={(0,-4.5)}]
\draw[thick]
    (-0.5,-.9) .. controls (1-0.3,-0.8) and (1.3,-0.8) .. (2.5,-.9);
  \draw[thick,cyan]
    (0,-0.85) 
      .. controls (-.5,3) and (3,3) .. (2,-0.85);
\filldraw[fill=white,draw=none] (1.9,1.5) circle (0.15);
\draw[thick,orange]  (0,-0.85)  .. controls (1,1) and (3,1) .. (0,-0.85);
\draw[thick,red]  (0,-0.85)  .. controls (1.5,2.5) and (4,2.4) .. (2,-0.85);
\filldraw[fill=white,draw=none] (.2,1.25) circle (0.15);
\draw[thick]  (0,-0.85)  .. controls (-1,2.4) and (1,2.4) .. (0,-0.85);
\draw[thick] (0,-.85)\nn node[below]{$A$};
\draw[thick] (2,-.85)\nn node[below]{$B$};
\draw (0,1.6) node[above]{$\wg_2$};
\end{scope}

\draw[-stealth] (3.5,-4.5) -- (4.5,-4.5);
\draw (4,-4.5) node[above]{$\wg_2$};
%\draw (4,-4.5) node[below]{the black arc};

\begin{scope}[shift={(6,-4.5)}]
\draw[thick]
    (-0.5,-.9) .. controls (1-0.3,-0.8) and (1.3,-0.8) .. (2.5,-.9);
\draw[thick,orange]  (0,-0.85)  .. controls (0,1) and (2,1) .. (0,-0.85);
\draw[thick]  (2,-0.85)  .. controls (1,2.5) and (3.5,2.4) .. (2,-0.85);
\draw[thick,cyan]  (0,-0.85)  .. controls (-1,2.4) and (1,2.4) .. (2,-0.85);
\filldraw[fill=white,draw=none] (1.9,1.5) circle (0.15);
\filldraw[fill=white,draw=none] (.4,1.5) circle (0.15);
 \draw[thick,red]
    (0,-0.85) 
      .. controls (-.5,3) and (3,3) .. (2,-0.85);
\draw[thick] (0,-.85)\nn node[below]{$A$};
\draw[thick] (2,-.85)\nn node[below]{$B$};
\draw (-0,1.7) node[above]{$\wg_3$};
\end{scope}

\draw[-stealth] (-2,-9) -- (-1,-9);
\draw (-1.5,-9) node[above]{ $\wg_3$};

\begin{scope}[shift={(0,-9)}]
\draw[thick]
    (-0.5,-.9) .. controls (1-0.3,-0.8) and (1.3,-0.8) .. (2.5,-.9);
\draw[thick,orange]  (0,-0.85)  .. controls (0,1) and (2,1) .. (0,-0.85);
\draw[thick]  (2,-0.85)  .. controls (.5,2.5) and (4,2.4) .. (2,-0.85);
\filldraw[fill=white,draw=none] (1.8,1.4) circle (0.15);
\filldraw[fill=white,draw=none] (2.6,.9) circle (0.15);
\draw[thick,cyan]  (2,-0.85)  .. controls (2.5,2.5) and (5,1.5) .. (2,-0.85);
 \draw[thick,red]
    (0,-0.85) 
      .. controls (-.5,3) and (2.5,3) .. (2,-0.85);
\draw[thick] (0,-.85)\nn node[below]{$A$};
\draw[thick] (2,-.85)\nn node[below]{$B$};
\draw (1,2) node[above]{$\wg_4$};
\end{scope}

\draw[-stealth] (3.5,-9) -- (4.5,-9);
\draw (4,-9) node[above]{$\wg_4$};
%\draw (4,-9) node[below]{the red arc};

\begin{scope}[shift={(8,-9)}]
  \draw[thick,red]
    (-2,-0.85) 
      .. controls (-4,3.5) and (2.5,3.5) .. (0,-0.85);
\draw[thick]  (0,-0.85)  .. controls (-2.2,2.5) and (.5,2.4) .. (0,-0.85);
\filldraw[fill=white,draw=none] (-.2,1.25) circle (0.15);
\draw[thick,cyan]  (0,-0.85)  .. controls (-1,2.4) and (1,2.4) .. (0,-0.85);
\end{scope}

\begin{scope}[shift={(6,-9)}]
\draw[thick,orange]  (0,-0.85)  .. controls (-.5,.5) and (-2,.5) .. (0,-0.85);
\draw[thick]
    (-0.5,-.9) .. controls (1-0.3,-0.8) and (1.3,-0.8) .. (2.5,-.9);
\draw[thick] (0,-.85)\nn node[below]{$A$};
\draw[thick] (2,-.85)\nn node[below]{$B$};
\draw (-1,.3) node[above]{$\wg_5$};
\end{scope}

\begin{scope}[shift={(2,-13.5)}]
  \draw[thick,red]
    (-2,-0.85) 
      .. controls (-4,3.5) and (2.5,3.5) .. (0,-0.85);
\draw[thick,orange]  (0,-0.85)  .. controls (1.2,1.2) and (3,.8) .. (0,-0.85);
\draw[thick]  (0,-0.85)  .. controls (-2.2,2.5) and (.5,2.4) .. (0,-0.85);
\filldraw[fill=white,draw=none] (-.2,1.25) circle (0.15);
\draw[thick,cyan]  (0,-0.85)  .. controls (-1,2.4) and (1,2.4) .. (0,-0.85);
\end{scope}

\begin{scope}[shift={(0,-13.5)}]
\draw[thick]
    (-0.5,-.9) .. controls (1-0.3,-0.8) and (1.3,-0.8) .. (2.5,-.9);
\draw[thick] (0,-.85)\nn node[below]{$A$};
\draw[thick] (2,-.85)\nn node[below]{$B$};
\draw (1,2.4) node[above]{$\wg_6$};
\end{scope}

\draw[-stealth] (-2,-13.5) -- (-1,-13.5);
\draw (-1.5,-13.5) node[above]{$\wg_5$};
%\draw (-2,-13.5) node[below]{the orange arc};

\begin{scope}[shift={(8,-13.5)}]
  \draw[thick,red]
    (0,-0.85) 
      .. controls (-4,4) and (4,4) .. (0,-0.85);
\draw[thick,orange]  (0,-0.85)  .. controls (.55,1.5) and (1.5,1.3) .. (0,-0.85);
\draw[thick]  (0,-0.85)  .. controls (-2.2,2.5) and (.5,2.4) .. (0,-0.85);
\filldraw[fill=white,draw=none] (-.2,1.25) circle (0.15);
\draw[thick,cyan]  (0,-0.85)  .. controls (-1,2.4) and (1,2.4) .. (0,-0.85);
\end{scope}

\begin{scope}[shift={(6,-13.5)}]
\draw[thick]
    (-0.5,-.9) .. controls (1-0.3,-0.8) and (1.3,-0.8) .. (2.5,-.9);
\draw[thick] (0,-.85)\nn node[below]{$A$};
\draw[thick] (2,-.85)\nn node[below]{$B$};
%\draw (1.95,1.6) node[above]{$\wg_7=\wg_6$};
\end{scope}

\draw[-stealth] (3.5,-13.5) -- (4.5,-13.5);
\draw (4,-13.5) node[above]{$\wg_6$};
%\draw (4.5,-13.5) node[below]{the red arc};

\end{scope}
\end{tikzpicture}}
    \caption{The forward flip of type IV, an example}
    \label{fig:3}
\end{figure}
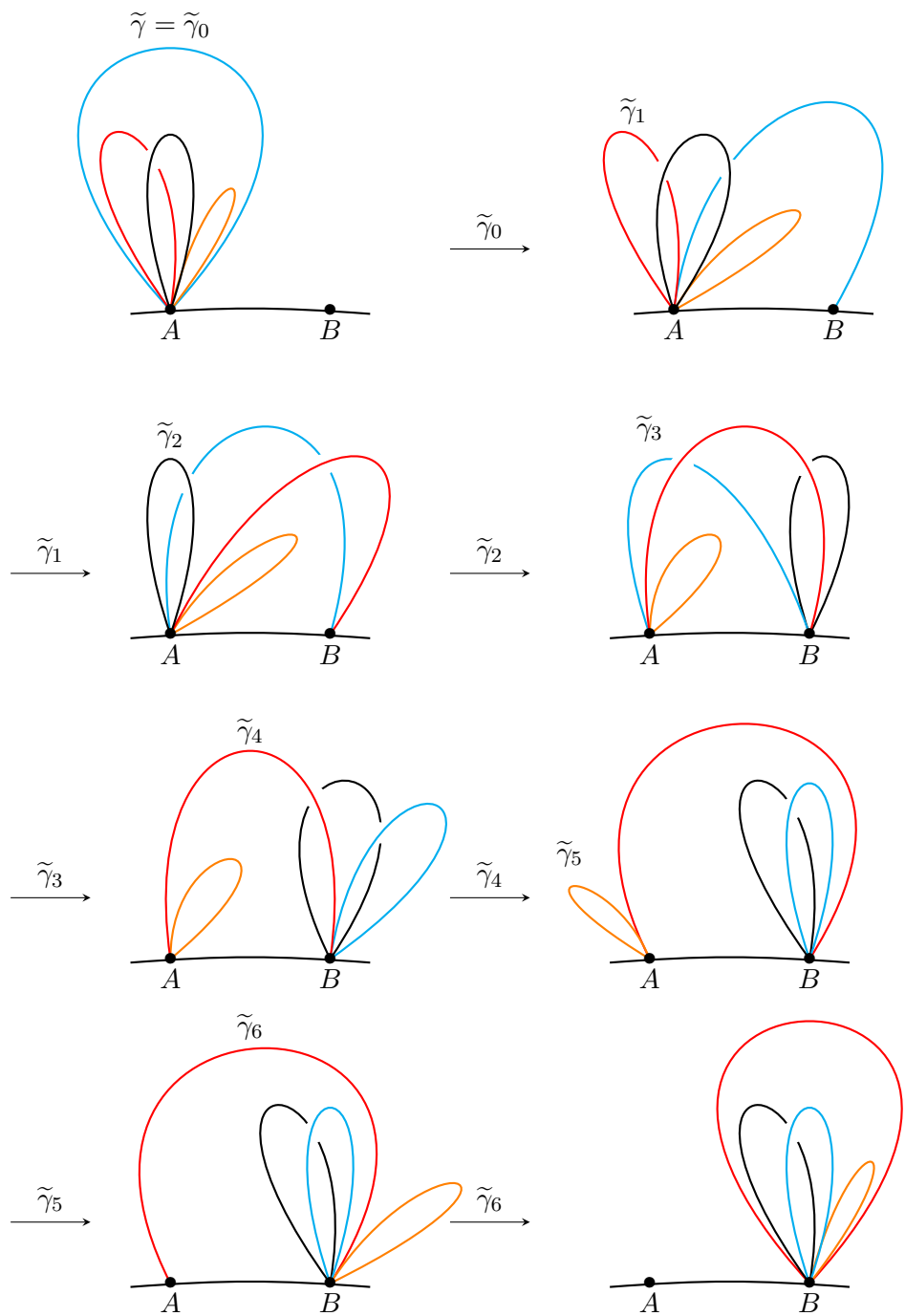

\begin{table}[ht]
    \renewcommand{\arraystretch}{1.5}
    \centering
    \begin{tabular}{|c|c|c|c|c|c|c|}
    \hline
        initial & after flip via $S$ & after flip via $S_1'$ & \multirow{11}{*}{$\cdots$} & after flip via $S_k'$ & \multirow{11}{*}{$\cdots$} & after flip via $S_m'$ \\ \cline{1-3}\cline{5-5}\cline{7-7}
        $X$ & $S\to X'\to X$ & $X'$ &  & $X'$ &  & $X'$ \\ \cline{1-3}\cline{5-5}\cline{7-7}
        $S$ & $S[1]$ & $S_1'\to S_1\to S[1]$  &  & $S_1$ &  & $S_1$ \\ \cline{1-3}\cline{5-5}\cline{7-7}
        $S_1$ & $S\to S_1'\to S_1$ & $S_1'[1]$  &  & $S_2$ &  & $S_2$ \\ \cline{1-3}\cline{5-5}\cline{7-7}
        $S_2$ & $S_2$ & $S_1'\to S_2'\to S_2$  &  & $S_3$ &  & $S_3$ \\ \cline{1-3}\cline{5-5}\cline{7-7}
        $\vdots$ & $\vdots$ & $\vdots$  &  & $\vdots$ &  & $\vdots$ \\ \cline{1-3}\cline{5-5}\cline{7-7}
        $S_k$ & $S_k$ & $S_k$  &  & $S_k'[1]$ &  & $S_{k+1}$ \\ \cline{1-3}\cline{5-5}\cline{7-7}
        $S_{k+1}$ & $S_{k+1}$ & $S_{k+1}$  &  & $S_k'\to S_{k+1}'\to S_{k+1}$ &  & $S_{k+2}$ \\ \cline{1-3}\cline{5-5}\cline{7-7}
        $\vdots$ & $\vdots$ & $\vdots$  &  & $\vdots$ & & $\vdots$ \\ \cline{1-3}\cline{5-5}\cline{7-7}
        $S_m$ & $S_m$ & $S_m$ &  & $S_m$ &  & $S_m'[1]$ \\ \cline{1-3}\cline{5-5}\cline{7-7}
        $T$ & $T$ & $T$ &  & $T$ &  & $S_m'\to T'\to T$ \\ \hline
    \end{tabular}
    \vspace{3mm}
    \caption{The simple tilting process in type II}
    \label{tab:1}
\end{table}

\begin{table}[ht]
    \renewcommand{\arraystretch}{1.5}
    \centering
    \begin{tabular}{|c|c|c|c|}
    \hline
        initial & after flip via $S$ & after flip via $S_1'$ & after flip via $S_2'$ \\ \hline
        $S$ & $S[1]$ & $S_1'\to S_1\to S[1]$  & $S_1$\\ \hline
        $S_1$ & $S\to S_1'\to S_1$ & $S_1'[1]$  & $S_2$\\ \hline
        $S_2$ & $S_2$ & $S_1'\to S_2'\to S_2$  & $S_2'[1]$ \\ \hline
        $X$ & $S\to X'\to X$ & $X'$ & $S_2'\to X''\to X'$ \\ \hline
    \end{tabular}
    \vspace{3mm}
    \caption{The simple tilting process in type III}
    \label{tab:2}
\end{table}

\begin{table}[ht]
    \renewcommand{\arraystretch}{1.5}
    \centering
    \begin{tabular}{|c|c|c|}
    \hline
        initial & after flip via $S$ & after flip via $S_1'$ \\ \hline
        $S$ & $S[1]$ & $S_1'\to S_1\to S[1]$  \\ \hline
        $S_1$ & $S\to S_1'\to S_1$ & $S_1'[1]$ \\ \hline
        $X$ & $S\to X'\to X$ & $S_1'\to X''\to X$ \\ \hline
    \end{tabular}
    \vspace{3mm}
    \caption{The simple tilting process in type III, continued}
    \label{tab:3}
\end{table}
\clearpage
%=========================================================

\end{document}